\newtheorem{theorem}{Theorem}[section]
\newtheorem{lemma}[theorem]{Lemma}
\newtheorem{definition}[theorem]{Definition}
\newtheorem{remark}[theorem]{Remark}
\newtheorem{proposition}[theorem]{Proposition}
\numberwithin{equation}{section}
\newenvironment{proof}[1][Proof]{\noindent\textbf{#1.} }{\hfill $\Box$}
 \makeatletter\setlength{\textwidth}{16.70cm}
\begin{document}
\title{{Global well-posedness of  the  2D nonhomogeneous incompressible nematic liquid crystal flows with vacuum}
\thanks{Qiao Liu is partially supported by the National Natural Science Foundation of China (11401202),  the Scientific Research Fund of Hunan Provincial
Education Department (14B117), and the China Postdoctoral Science Foundation (2015M570053).}
}
\author{
{\small  
 Qiao Liu$^{1,2}$ \thanks{\text{Corresponding author. E-mail address}: liuqao2005@163.com.
}}
{\small \quad  
Shengquan Liu$^{2}$ \thanks{\text{E-mail address}: shquanliu@163.com.
}}
{\small \quad  
Wenke Tan$^{1,3}$ \thanks{\text{E-mail address}: tanwenkeybfq@163.com.
}}
{\small \quad and\quad
Xin Zhong$^{3}$ \thanks{\text{E-mail address}: xzhong1014@amss.ac.cn.
}}
\medskip
\\
{\small  $^{1}$\textit{Department of Mathematics, Hunan Normal University, Changsha, Hunan 410081, P. R. China}}\\
{\small  $^{2}$\textit{Institute of Applied Physics and Computational Mathematics, Beijing 100088, P. R. China}}\\
{\small  $^{3}$\textit{Institute of Applied Mathematics, AMSS, Chinese Academy of Sciences, Beijing 100190, P. R. China}}
}
\date{}
\maketitle

\begin{abstract}
This paper concerns the Cauchy problem of the two-dimensional (2D) nonhomogeneous incompressible nematic liquid crystal flows on the
whole space $\mathbb{R}^{2}$ with vacuum as far field density. It is proved that the 2D nonhomogeneous incompressible nematic liquid crystal
flows admits a unique global strong solution provided the initial data density and the gradient of orientation decay not too slow at infinity,
 and the initial orientation satisfies a geometric condition (see \eqref{eq1.3}).
In particular, the initial data can be arbitrarily large and the initial density  may contain vacuum states and even have compact support. Furthermore, the large time behavior of the solution is also obtained.

\medskip

\textbf{Keywords}: nonhomogeneous  incompressible nematic liquid crystal flow; strong solutions; vacuum; temporal decay

\textbf{2010 AMS Subject Classification}: 76A15, 35B65, 35Q35
\end{abstract}

\section{Introduction}\label{Int}

\noindent

Liquid crystals can form and remain in an intermediate
phase of matter between liquids and solids. When a solid melts, if
the energy gain is enough to overcome the positional order but the
shape of the molecules prevents the immediate collapse of
orientational order, liquid crystals are formed.  The lack of
positional order is a shared property of liquid crystals and
liquids; on the other hand, liquid crystals are anisotropic (like
solids).  
The nematic liquid crystals
exhibit long¡ªrange ordering in the sense that their rigid rod-like
molecules arrange themselves with their long axes parallel to each
other. Their molecules float around as in a liquid, but have the
tendency to align along a preferred direction due to their
orientation. The hydrodynamic theory of the nematic liquid crystals was first derived by
 Ericksen and Leslie  during the period of 1958
through 1968 (see \cite{ER,LE}). A
brief account of the Ericksen-Leslie theory on nematic liquid
crystal flows and the derivations of several approximate systems can
be found in the appendix of \cite{LL1}. For more details on the
hydrodynamic continuum theory of liquid crystals, we refer the
readers to the book of Stewart \cite{IWS}.

In this paper, we  investigate  the global existence of
solutions to the following two dimensional (2D) simplified version of
nematic liquid crystal flow in the whole space $\mathbb{R}^{2}$, which describes the
motion of a nonhomogeneous incompressible flow of
 nematic liquid crystals:
(see, e.g., \cite{L,LLW}):
\begin{align}
   \label{eq1.1}
 \begin{cases}
 \varrho_{t}+\operatorname{div} (\varrho u) =0,\\
\varrho u_{t}+\varrho u\cdot\nabla u-\nu\Delta u +\nabla{P}=-\lambda\operatorname{div}(\nabla d \odot\nabla d),\\
d_{t}+(u\cdot\nabla)d=\gamma(\Delta d+|\nabla d|^{2}d),\\
\quad\operatorname{div} u=0,\quad\quad |d|=1,
\end{cases}
\end{align}
where $\varrho(x,t): \mathbb{R}^{2}\times (0,+\infty)\rightarrow \mathbb{R}$ is the density, $u(x,t):\mathbb{R}^{2}\times (0,+\infty)\rightarrow
\mathbb{R}^{2}$ is the unknown velocity field of the flow,
$P(x,t):\mathbb{R}^{2}\times (0,+\infty)\rightarrow \mathbb{R}$ is
the scalar pressure and $d:\mathbb{R}^{2}\times
(0,+\infty)\rightarrow \mathbb{S}^{2}$, the unit sphere in
$\mathbb{R}^{3}$, is the unknown (averaged) macroscopic/continuum
molecule orientation of the nematic liquid crystal flow, $\operatorname{div} u=0$ represents the incompressible condition,  and  $\nu$, $\lambda$ and $\gamma$ are positive numbers
associated to the properties of the material: $\nu$ is the kinematic
viscosity, $\lambda$ is the competition between kinetic energy and
potential energy, and $\gamma$ is the microscopic elastic relaxation
time for the molecular orientation field. The notation $\nabla
d\odot\nabla d$ denotes the $2\times 2$ matrix whose $(i,j)$-th
entry is given by $\partial_{i}d\cdot
\partial_{j}d$ ($1\leq i,j\leq 2$).

We consider the Cauchy problem for \eqref{eq1.1}  with 
the initial
conditions
\begin{align}\label{eq1.2}
 &\varrho(x,0)=\varrho_{0}(x),\quad u(x,0)=u_{0}(x), \quad d(x,0)=d_{0}(x),\quad |d_{0}(x)|=1, \quad\text{ in }\mathbb{R}^{2}
\end{align}
for given initial data $\varrho_{0}$, $u_{0}$, and $d_{0}$. Since the concrete values of $\nu$, $\lambda$ and $\gamma$ do
not play a special role in our discussion, for simplicity, we assume
that they are all equal to one throughout this paper.

The above system \eqref{eq1.1}--\eqref{eq1.2}  is a simplified
version of the Ericksen-Leslie model \cite{ER,LE}, but it still
retains most important  mathematical structures as well as most of
the essential difficulties of the original Ericksen-Leslie model.  Mathematically, system \eqref{eq1.1}--\eqref{eq1.2} is a strongly coupled system between
the nonhomogeneous incompressible Navier-Stokes equations (see, e.g., \cite{ZL,LXZ,Lions1,Temam})
 and the transported
heat flows of harmonic map (see, e.g., \cite{CDY,LLZ,W}), and thus, its mathematical analysis is full
of challenges.

There is a huge literature on the studies about global existence and behaviors of solutions to \eqref{eq1.1}. The important progress on the global existence of strong or weak solutions in multi-dimension has been made by many authors, refer to \cite{DHX,LJK,LJK2,FLN,WD} and references therein.
However, since the system \eqref{eq1.1} contains the incompressible Navier-Stokes equations as a subsystem, one cannot expect, in general, any better results than those for the Navier-Stokes equations. In the homogeneous case, i.e., $\varrho\equiv \text{constant}$, Lin-Lin-Wang in \cite{LLW}
established that there exists global Leray--Hopf type weak solutions
to the initial boundary value problem for system
\eqref{eq1.1}--\eqref{eq1.2} on bounded domains in two space
dimensions (see also \cite{HMC}). The uniqueness of such weak
solutions is proved by Lin-Wang \cite{LW}, see also Xu-Zhang \cite{XZ} for related works. When the space dimension is three, Lin-Wang \cite{LW1}
obtained the existence of global weak solutions very recently when the initial data $(u_{0},d_{0})\in L^{2}\times H^{1}$ with the initial director field $d_{0}$ maps to the upper hemisphere $\mathbb{S}_{+}^{2}$.  The global existence of weak solutions to homogeneous  system \eqref{eq1.1}--\eqref{eq1.2} with general initial data in dimension three is still an open problem. There are also some studies on the homogeneous incompressible nematic liquid crystal flows, we refer the readers  to \cite{LL1,LL2,LZZ,JHW} and references therein.

In the non-homogeneous case, i.e., the density dependent case, when the initial data away from vacuum, recently, Li \cite{LJK2} established the global existence of strong and weak solutions to the two-dimensional system \eqref{eq1.1}--\eqref{eq1.2} provided that the initial  orientation $d_{0}=(d_{01},d_{02},d_{03})$ satisfies a geometric condition
\begin{equation}\label{eq1.3}
d_{03}\geq\varepsilon_{0}\ \text{for some positive}\ \varepsilon_{0}>0.
\end{equation}
In the presence of vacuum,
if the initial data is small (in some sense) and satisfies the following  compatibility conditions
\begin{align}\label{eq1.4}
-\mu\Delta u_{0}+\nabla P_{0}+\lambda\operatorname{div}(\nabla d_{0}\odot\nabla d_{0})=\varrho_{0}^{\frac{1}{2}} g_{0}
\end{align}
in a bounded smooth domain $\Omega\subseteq\mathbb{R}^2$, and $(P_{0},g_{0})\in H^{1}(\Omega)\times L^{2}(\Omega)$, Wen-Ding \cite{WD} obtained the global existence and uniqueness of the strong solutions to the system \eqref{eq1.1}--\eqref{eq1.2}, see also \cite{FLN,LJK} for related works. It should be emphasized the possible appearance of vacuum is one
of the main difficulties, which indeed leads to the singular behaviors of solutions in the presence of vacuum, such as the finite time blow-up of smooth solutions \cite{HW1}.

It is not known in general about the existence of global strong solutions to the
problem \eqref{eq1.1}--\eqref{eq1.2} in two-dimension without the geometric condition \eqref{eq1.3}
or the compatibility condition \eqref{eq1.4} imposed on the initial data. This problem is rather interesting
 and hard to investigate. Indeed, it should be noted that the previous studies on the heat flow of a
 harmonic map \cite{CDY} indicate that the strong solution of a harmonic map can be blow-up in finite time.
 In our case, since the system \eqref{eq1.1} contains the heat flow of a harmonic map as a subsystem,
 we cannot expect that \eqref{eq1.1} have a global strong solution with general initial data.
 This makes the analysis rather delicate and difficult.



It should be noticed that when $d$ is a constant vector and $|d|=1$, the system \eqref{eq1.1} reduces to the nonhomogeneous incompressible Navier--Stokes equations, which have been discussed in
numerous studies \cite{CK2003,Danchin03,AK,Lions1,LSZ} and so on. In the case when $\varrho_{0}$ is bounded away from zero,  Kazhikov \cite{AK} established the global existence of weak solutions. Danchin \cite{Danchin03} obtained the existence of local and unique strong solutions in the so-called critical Besov space. When the initial data may contain vacuum states, Lions \cite{Lions1} obtained the the global existence of weak solutions.  Choe-Kim \cite{CK2003} proposed a compatibility condition and established the local existence of strong solutions under some suitable smallness conditions. The global existence of strong solutions on bounded domains were established by Huang-Wang \cite{HW2014}. Recently, inspired by Li-Liang \cite{LJLZ},  L{\"u}-Shi-Zhong \cite{LSZ} established the global existence of strong solutions to the 2D Cauchy problem of the incompressible Navier-Stokes equations on the whole space $\mathbb{R}^{2}$ with vacuum as far field density. However, the global existence of strong solution with large data to the 2D Cauchy problem \eqref{eq1.1}--\eqref{eq1.2} with  vacuum as far field density is still open. In fact, this is the main aim of the present paper.

Before formulating our main result, let us first  define precisely what we mean by strong solutions.

\begin{definition}\label{def1.1}
If all derivatives involved in \eqref{eq1.1} for $(\varrho, u, P, d)$ are regular distributions, and equations \eqref{eq1.1} hold almost everywhere in $\mathbb{R}^{2}\times (0,T)$, then $(\varrho, u, P, d)$ is called a strong solution to \eqref{eq1.1}.
\end{definition}

Without loss of generality,  we assume that the initial density $\varrho_{0}$ satisfies
\begin{align}\label{eq1.5}
\int_{\mathbb{R}^{2}} \varrho_{0} \text{d}x=1,
\end{align}
which implies that there exists a positive constant $N_{0}$ such that
\begin{align}\label{eq1.6}
\int_{B_{N_{0}}}\varrho_{0}\text{d}x\geq \frac{1}{2}\int_{\mathbb{R}^{2}}\varrho_{0}\text{d}x
= \frac{1}{2},
\end{align}
where $B_{R}\triangleq \{x\in\mathbb{R}^{2} | |x|<R\}$ for all $R>0$.

Now, we state our main result as follows:

\begin{theorem}\label{thm1.2}
For constants $q>2$, $a>1$, assume that the initial data $(\varrho_{0}, u_{0}, d_{0})$ satisfies \eqref{eq1.3}, \eqref{eq1.5}, \eqref{eq1.6}, and
\begin{align}\label{eq1.7}
\begin{cases}
\varrho_{0}\geq 0, \quad \varrho_{0}\bar{x}^{a}\in L^{1}(\mathbb{R}^{2})\cap H^{1}(\mathbb{R}^{2})\cap W^{1,p}(\mathbb{R}^{2}), \quad
\varrho^{\frac{1}{2}}_{0}u_{0}\in L^{2}(\mathbb{R}^{2}), \quad \nabla u_{0}\in L^{2}(\mathbb{R}^{2}),\\
\operatorname{div} u_{0}=0,\quad d_{0}\in L^{2}(\mathbb{R}^{2}),\quad  \nabla d_{0} \bar{x}^{\frac{a}{2}}\in L^{2}(\mathbb{R}^{2}),\quad \nabla^{2} d_{0}\in L^{2}(\mathbb{R}^{2}),
\quad |d_{0}|=1,
\end{cases}
\end{align}
where
\begin{align*}
\bar{x}\triangleq (e+|x|^{2})^{\frac{1}{2}}\ln^{2} (e+|x|^{2}).
\end{align*}
Then the Cauchy problem \eqref{eq1.1}--\eqref{eq1.2} has a unique global strong solution $(\varrho, u, P, d)$ satisfying that for any $0<T<\infty$,
\begin{align}\label{eq1.8}
\begin{cases}
0\leq \varrho\in C([0,T]; L^{1}(\mathbb{R}^{2})\cap H^{1}(\mathbb{R}^{2})\cap W^{1,q}(\mathbb{R}^{2})),\\
  \varrho \bar{x}^{a}\in L^{\infty}(0,T; L^{1}(\mathbb{R}^{2})\cap H^{1}(\mathbb{R}^{2})\cap W^{1,q}(\mathbb{R}^{2}))\\
  \varrho^{\frac{1}{2}} u,  \nabla u,   t^{\frac{1}{2}}\nabla u,  t^{\frac{1}{2}}\varrho^{\frac{1}{2}} u,
    t^{\frac{1}{2}}\nabla P, t\nabla P, t^{\frac{1}{2}}\nabla^{2} u, t \nabla^{2} u\in L^{\infty}(0,T; L^{2}(\mathbb{R}^{2})),\\
  \nabla d, \nabla d \bar{x}^{\frac{a}{2}}, \nabla^{2} d, t^{\frac{1}{2}}\nabla^{2} d, t^{\frac{1}{2}}\nabla d_{t}, t^{\frac{1}{2}}\nabla^{3}d
  \in L^{\infty}(0,T; L^{2}(\mathbb{R}^{2})),\\
  \nabla u\in L^{2}(0,T; H^{1}(\mathbb{R}^{2}))\cap L^{\frac{q+1}{q}}(0,T; W^{1,q}(\mathbb{R}^{2})),\\
  \nabla P\in L^{2}(0,T; L^{2}(\mathbb{R}^{2}))\cap L^{\frac{q+1}{q}}(0,T; L^{q}(\mathbb{R}^{2})),\\
  \nabla^{2} d\in L^{2}(0,T; H^{1}(\mathbb{R}^{2})),\, \nabla d_{t}, \nabla^{2} d \bar{x}^{\frac{a}{2}}\in L^{2}(\mathbb{R}^{2}\times (0,T)),\\
  \varrho^{\frac{1}{2}} u_{t},  t^{\frac{1}{2}}\nabla u_{t}, t^{\frac{1}{2}}\nabla d_{t}, t^{\frac{1}{2}}\nabla^{2} d_{t}
  \in L^{2}(\mathbb{R}^{2}\times (0,T)),
  \\
  t^{\frac{1}{2}} \nabla u \in L^{2}(0,T; W^{1,q}(\mathbb{R}^{2})),
\end{cases}
\end{align}
and
\begin{align}\label{eq1.9}
\inf_{0\leq t\leq T} \int_{B_{N_{1}}} \varrho(x,t)\text{d}x\geq \frac{1}{4}
\end{align}
for some positive constant $N_{1}$ depending only on $\|\varrho^{\frac{1}{2}}_{0} u_{0}\|_{L^{2}(\mathbb{R}^{2})}$, $N_{0}$, and $T$. Moreover, the solution
$(\varrho, u, P, d)$ has the following temporal  decay rates, i.e., for all $t\geq 1$,
\begin{align}\label{eq1.10}
\begin{cases}
\|\varrho^{\frac{1}{2}} u_{t}(\cdot,t)\|_{L^{2}(\mathbb{R}^{2})}+\|d_{t}(\cdot,t)\|_{H^{1}(\mathbb{R}^{2})}+\|\nabla u(\cdot,t)\|_{L^{2}(\mathbb{R}^{2})} +\|\nabla^{2} d(\cdot, t)\|_{L^{2}(\mathbb{R}^{2})}\leq C t^{-\frac{1}{2}},\\
 \|\nabla^{2} u(\cdot,t)\|_{L^{2}(\mathbb{R}^{2})} +\||\nabla d||\nabla^{2} d|(\cdot, t)\|_{L^{2}(\mathbb{R}^{2})}
+\|\nabla P(\cdot,t)\|_{L^{2}(\mathbb{R}^{2})}\leq C t^{-1},
\end{cases}
\end{align}
where $C$ depends only on initial datums.
\end{theorem}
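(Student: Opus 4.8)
The plan is to follow the now-standard three-stage programme for such problems: (i) construct local strong solutions by a regularized approximation scheme, (ii) establish a priori estimates on $[0,T]$ that are independent of the regularization and allow continuation to any finite time, and (iii) derive uniqueness and the decay rates. Because the far-field density vanishes, I would not work with $\|u\|_{L^2}$ (which need not be finite); instead the velocity is controlled only through $\|\sqrt{\varrho}\,u\|_{L^2}$ and $\|\nabla u\|_{L^2}$, and the spatial behaviour is tracked through the weight $\bar{x}$. For the local theory I would first smooth the data and add a lower bound $\delta$ to the density, solve the resulting non-degenerate system by a linearized iteration, and recover the vacuum case by letting $\delta\to0$; the crucial point is that all the estimates below depend on the data only through the quantities appearing in \eqref{eq1.7}.

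For the a priori estimates I would proceed in order of increasing regularity. First, the basic energy identity controls $\|\sqrt{\varrho}\,u\|_{L^2}$, $\|\nabla d\|_{L^2}$ and the dissipation $\int_0^T(\|\nabla u\|_{L^2}^2+\|\Delta d+|\nabla d|^2 d\|_{L^2}^2)$. The essential structural input is the geometric condition \eqref{eq1.3}: writing the equation for the third component as $\partial_t d_3+u\cdot\nabla d_3-\Delta d_3=|\nabla d|^2 d_3$ and applying the maximum principle shows that $d_3\ge\varepsilon_0$ is propagated for all time. This lower bound is what lets me tame the supercritical term $|\nabla d|^2 d$: using $d\cdot\Delta d=-|\nabla d|^2$ together with $d_3\ge\varepsilon_0$, I can bound $\int|\nabla d|^4$ (and the analogous higher-order quantities) in terms of controllable norms, thereby closing the $H^1$ and $H^2$ estimates for $d$ without any smallness assumption.

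Next come the weighted and higher-order estimates, which are the technical heart. Using that $\varrho$ retains positive mass on a fixed ball — I would first prove \eqref{eq1.9} by tracking $\int_{B_{N_1}}\varrho$ via the continuity equation and the weighted control of $u$ — the whole-space analogue of the Poincar\'e inequality gives $\|u\,\bar{x}^{-\eta}\|_{L^{s}}\lesssim \|\sqrt{\varrho}\,u\|_{L^2}+\|\nabla u\|_{L^2}$ for suitable $\eta,s$, and this replaces the missing $L^2$ bound on $u$. I would then propagate the weighted density bound $\varrho\bar{x}^a\in L^\infty_t(L^1\cap H^1\cap W^{1,q})$ by differentiating the transport equation and using the logarithmic growth of $\bar{x}$ against the velocity bound, and simultaneously upgrade the velocity, pressure and director estimates by testing the momentum equation with $u_t$ and $\Delta u$ (treating $\operatorname{div}(\nabla d\odot\nabla d)$ as a forcing controlled through the $d$-estimates) and invoking elliptic regularity for the associated stationary Stokes system. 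Since the compatibility condition \eqref{eq1.4} is not assumed, the second-order bounds are only available with the time weights $t^{1/2}$ and $t$; these are produced by multiplying the corresponding differential inequalities by powers of $t$ and exploiting parabolic smoothing, which is exactly the source of the $t^{1/2}$- and $t$-weighted norms recorded in \eqref{eq1.8}.

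With the global a priori bounds in hand, continuation to arbitrary $T<\infty$ is immediate, and uniqueness follows by a standard energy argument on the difference of two solutions, where the weighted inequalities above are again used to make sense of the velocity difference on the whole plane. Finally, the temporal decay \eqref{eq1.10} is obtained by refining the energy estimates into differential inequalities of the type $\frac{d}{dt}A(t)+B(t)\le0$ with $A$ bounded by the quantity controlling $B$, so that Gronwall-type integration yields the algebraic rates $t^{-1/2}$ and $t^{-1}$ for $t\ge1$. I expect the main obstacle to be closing the coupled higher-order weighted estimate: the density weight $\varrho\bar{x}^a$, the velocity/pressure regularity, and the $H^2$-control of $d$ all feed into one another, and it is delicate to arrange the estimates so that the supercritical director nonlinearity (handled via $d_3\ge\varepsilon_0$) and the whole-space velocity control (handled via the weighted Poincar\'e inequality) can be absorbed simultaneously rather than generating an uncontrolled feedback loop.
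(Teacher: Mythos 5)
Your outline tracks the paper's strategy closely at the level of architecture: local existence by approximation with strictly positive density, weighted Hardy--Poincar\'e inequalities of Lions type to replace the missing $L^{2}$ bound on $u$ when the far-field density vanishes, propagation of $d_{3}\geq\varepsilon_{0}$ by the maximum principle, time-weighted higher-order estimates in place of the compatibility condition \eqref{eq1.4}, a continuation argument, and decay from $t$-weighted differential inequalities. However, there is one genuine gap at the single point where a new idea (rather than standard technique) is required. You claim that the identity $d\cdot\Delta d=-|\nabla d|^{2}$ together with $d_{3}\geq\varepsilon_{0}$ lets you bound $\int|\nabla d|^{4}$ and thereby close the $H^{1}$ and $H^{2}$ estimates for $d$. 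This does not work as stated: with $|d|=1$ the pointwise identity gives only $\|\nabla d\|_{L^{4}}^{4}\leq\|\Delta d\|_{L^{2}}^{2}$ with constant exactly one, and expanding the dissipation in the basic energy law yields $\|\Delta d+|\nabla d|^{2}d\|_{L^{2}}^{2}=\|\Delta d\|_{L^{2}}^{2}-\|\nabla d\|_{L^{4}}^{4}\geq 0$, which is vacuous. The energy inequality therefore gives no control whatsoever of $\int_{0}^{T}\|\Delta d\|_{L^{2}}^{2}\,\mathrm{d}t$ or $\int_{0}^{T}\|\nabla d\|_{L^{4}}^{4}\,\mathrm{d}t$ without a strict quantitative improvement.

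What is actually needed, and what the paper invokes (Theorem 4.1, due to Lei--Li--Zhang), is the rigidity estimate $\|\nabla d\|_{L^{4}}^{4}\leq(1-\bar{\omega})\|\Delta d\|_{L^{2}}^{2}$ for some $\bar{\omega}=\bar{\omega}(\varepsilon_{0},M_{0})>0$, valid for maps into the hemisphere $d_{3}\geq\varepsilon_{0}$ with bounded Dirichlet energy. This is a compactness/rigidity theorem reflecting the absence of nontrivial harmonic spheres with image in a hemisphere; it is not a consequence of the pointwise identity. Only with this strict gap does the harmonic-map energy become coercive, $\|\Delta d+|\nabla d|^{2}d\|_{L^{2}}^{2}\geq\frac{\bar{\omega}}{2}(\|\Delta d\|_{L^{2}}^{2}+\|\nabla d\|_{L^{4}}^{4})$, and only then does the basic energy law feed the $L^{2}_{t}H^{2}_{x}$ control of $d$ into the $H^{1}$ estimate for $u$ and the rest of the bootstrap. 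Without this ingredient your entire chain of higher-order estimates has no starting point, and this is precisely why the geometric condition \eqref{eq1.3} (rather than smallness) makes the global theory possible. A secondary, more technical omission: on the whole plane the pressure term arising when you test the momentum equation (with the material derivative $\dot{u}=u_{t}+u\cdot\nabla u$, as the paper does to obtain time-independent bounds) produces $\int P\,\partial_{i}u_{j}\partial_{j}u_{i}\,\mathrm{d}x$, which cannot be absorbed by $\|P\|_{L^{2}}$ since only $\nabla P\in L^{2}$ is available; the paper handles it via the $\mathcal{H}^{1}$--BMO duality and the div-curl structure of $\partial_{i}u\cdot\nabla u_{i}$, a step your plan of testing with $u_{t}$ and $\Delta u$ does not address.
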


\begin{remark}
When $d$ is a constant vector and $|d|=1$, the system \eqref{eq1.1} turns to be the nonhomogeneous incompressible Navier--Stokes equations, and Theorem \ref{thm1.2} is similar to the results of \cite{LSZ}. Roughly speaking, we generalize the results of \cite{LSZ} to the incompressible nematic liquid crystal flows.
\end{remark}

\begin{remark}
Our Theorem \ref{thm1.2} holds for arbitrarily large data which is in sharp contrast to \cite{FLN,LJK,WD} where the smallness conditions on the initial data is needed in order to obtain the global existence of strong solutions. Moreover, the compatibility condition \eqref{eq1.4} on the initial data is also needed in \cite{FLN,LJK,WD}. It seems more involved to show the global existence of strong solutions with general initial data. This is the main reason for us to add an additional geometric condition \eqref{eq1.3}.
\end{remark}

\begin{remark}
Compared  with \cite{LJK2}, there is no need to impose the absence of vacuum for the initial density. Furthermore, it should be pointed out that the large time asymptotic decay with rates of the global strong solution in \eqref{eq1.10} is completely new for the 2D nonhomogeneous nematic liquid crystal flows.
\end{remark}

We now make some comments on the analysis of the present paper. Using some key ideas due to \cite{ZL,LXZ}, where the authors deal with the 2D incompressible Navier--Stokes and MHD equations, respectively, we first establish that if $(\varrho_{0}, u_{0},d_{0})$ satisfies \eqref{eq1.5}--\eqref{eq1.7}, then there exists a small $T_{0}>0$ such that the Cauchy problem \eqref{eq1.1}--\eqref{eq1.2} admits a unique strong solution  $(\varrho,u,P,d)$ in $\mathbb{R}^{2}\times (0,T_{0}]$ satisfying \eqref{eq1.8} and \eqref{eq1.9} (see Theorem \ref{thm3.1}). Thus, to prove Theorem \ref{thm1.2}, we only need to give some global a priori estimates on the strong solutions to system \eqref{eq1.1}--\eqref{eq1.2} in suitable higher norms.
It should be pointed out that the crucial techniques of proofs in \cite{LW,DHX} cannot be adapted to the situation treated here. One reason is that when $\Omega\subseteq \mathbb{R}^{2}$ becomes unbounded, the standard Sobolev embedding inequality is critical, and it seems difficult to bound the $L^{p}$-norm ($p>2$) of the velocity $u$ just in terms of $\|\varrho^{\frac{1}{2}} u\|_{L^{2}(\mathbb{R}^{2})}$ and $\|\nabla u\|_{L^{2}(\mathbb{R}^{2})}$. Moreover, compared with \cite{LSZ,LXZ}, for system \eqref{eq1.1}--\eqref{eq1.2} treated here, the strong coupling terms and strong nonlinear terms, such as $\operatorname{div}(\nabla d\odot\nabla d)$,  $u\cdot\nabla d$ and $|\nabla d|^{2}d$, will bring out some new difficulties.

To overcome these difficulties mentioned above, some new ideas are needed. To deal with the difficulty  caused by the lack of the Sobolev inequality, we observe that, in  equations \eqref{eq1.2}, the velocity $u$ is always accompanied by $\varrho$. Motivated by \cite{LJLZ,LZ,ZL}, by introducing a weighted function to the density, as well as the Hardy-type inequality in \cite{Lions1} by Lions, the $\|\varrho^{\alpha} u\|_{L^{r}(\Omega)}$ ($r>2,\alpha>0$ and $\Omega=B_{R}$ or $\mathbb{R}^{2}$) is controlled in terms of $\|\varrho^{\frac{1}{2}} u\|_{L^{2}(\Omega)}$ and $\|\nabla u\|_{L^{2}(\Omega)}$ (see Lemma \ref{lem2.4}).   After some spatial estimates on $\nabla d$ (i.e., $\nabla d \bar{x}^{\frac{a}{2}}$, see \eqref{eq3.5}), and suitable a priori estimates, we then construct approximate solutions to \eqref{eq1.1}, that is, for density strictly away from vacuum initially, we consider a initial boundary value problem of \eqref{eq1.1} in any bounded ball $B_{R}$ with radius $R>0$.
Combining all these ideas stated above with those due to
{\cite{ZL,LXZ}, we derive some desired bounds on the gradients of the velocity and the spatial weighted ones on both the density and its gradients where all these bounds are independent of both the radius of the balls $B_{R}$ and the lower bound of the initial density, and then obtain the local existence and uniqueness of solution (see Subsection 3.2).

Base on the local existence result (see Theorem \ref{thm3.1}), we try to give some a priori estimates which are needed to obtain the global  existence  of strong solutions.  When we derive the estimates on the $L^{\infty}(0,T;L^{2}(\mathbb{R}^{2}))$-norm of $\|\nabla u\|_{L^{2}(\mathbb{R}^{2})}$ and $\|\nabla^{2} d\|_{L^{2}(\mathbb{R}^{2})}$. On the one hand, motivated by \cite{LZ,LSZ}, we use material derivatives $\dot{u}\triangleq u_{t}+u\cdot\nabla u$ instead of the usual $u_{t}$, and use some facts on Hardy and BMO spaces (see Lemma \ref{lem2.6}) to bound the key term $\int_{\mathbb{R}^{2}} |P||\nabla u|^{2}\text{d}x$ (see the estimates of $I_{2}$ of \eqref{eq4.5}). On the other hand, the usual $L^{2}(\mathbb{R}^{2}\times (0,T))$-norm of $\nabla d_{t}$ cannot be directly estimated due to the strong coupled term $u\cdot\nabla d$ and the strong nonlinear term $|\nabla d|^{2}d$. Motivated by \cite{LXZ1}, multiplying $\eqref{eq1.1}_{3}$ by $\Delta\nabla d$ instead of the usual $\nabla d_{t}$, and the nonlinear terms $u\cdot\nabla d$ and $|\nabla d|^{2}d$ can be controlled after integration by parts (see \eqref{eq4.09}). Using the structure of the 2D heat flows of harmonic maps, we multiply \eqref{eq3.8} by $\nabla d\Delta |\nabla d|^{2}$ and thus obtain some useful a priori estimates on $\||\nabla d||\nabla^{2}d|\|_{L^{2}(\mathbb{R}^{2})}$ and $\||\nabla d||\Delta\nabla d|\|_{L^{2}(\mathbb{R}^{2})}$ (see \eqref{eq4.15}), which are crucial in deriving the time-independent estimates on both the $L^{\infty}(0,T; L^{2}(\mathbb{R}^{2}))$-norm of $t^{\frac{1}{2}}\varrho^{\frac{1}{2}}\dot{u}$ and the $L^{2}(\mathbb{R}^{2}\times (0,T))$-norm of $t^{\frac{1}{2}} \nabla \dot{u}$ (see \eqref{eq4.11}).
Next, after some careful analysis, we derive the desired $L^{1}(0,T;L^{\infty}(\mathbb{R}^{2}))$ bound of the gradient of the velocity $\nabla u$ (see \eqref{eq4.28}), which in particular implies the bound on the $L^{\infty}(0,T;L^{q}(\mathbb{R}^{2}))$-norm ($q>2$) of the gradient of the density. Moreover, some useful spatial weighted estimates on $\varrho, \nabla d, \nabla^{2}d$ are derived (see Lemma \ref{lem3.6}). With the a priori estimates stated above in hand, we can estimate the higher order derivatives of the solution $(\varrho,u,P,d)$ (see \eqref{eq4.33}) by using the similar arguments as those in \cite{LSZ,LXZ1} to obtain the desired results.

The remaining parts of the present paper are organized as follows.
In Section 2,  we shall give some elementary facts and inequalities which will be needed in later analysis. In Section 3,  we prove the local existence and uniqueness of solutions to the Cauchy problems of \eqref{eq1.1}--\eqref{eq1.2}.
Finally, in Section 4, we first establish some a priori estimates on strong solutions, then the proof of Theorem \ref{thm1.2} is given.
Throughout the paper, we denote by $C$ the positive
constant, which may depend on $a$, $\varepsilon_{0}$ and the initial data, and its value may change from line to line.
We also $C(\alpha,\beta,\cdots)$ to emphasize that the constant $C$ depends on $\alpha,\beta,\cdots$.

\section{Preliminaries}

In this section, we shall give some known results and elementary inequalities which will be used frequently later.  We first list the following local existence theory on bounded balls, where the initial density is strictly away from vacuum, whose proof can be shown by similar arguments as in \cite{HWW12,LJK,WD}.

\begin{lemma}\label{lem2.1}
For $R>0$ and $B_{R}=\{x\in\mathbb{R}^{2}| |x|\leq R\}$, assume that $(\varrho_{0}, u_{0}, d_{0})$ satisfies
\begin{align}\label{eq2.1}
\varrho_{0}\!\in \!H^{1}\!(B_{\!R})\cap L^{\infty}\!(B_{\!R}), \!\quad\! u_{0}\!\in\! H_{0}^{1}\!(B_{\!R})\cap H^{2}\!(B_{\!R}),\!\quad\! d_{0}\!\in\! H^{2}\!(B_{\!R}), \!\quad\! \inf_{x\in B_{\!R}}\! \varrho_{0}(x)\!>\!0,\!\quad\! \operatorname{div} u_{0}=0, \quad\! |d_{0}|=1.
\end{align}
Then there exists a small time $T_{R}>0$ such that the equations \eqref{eq1.1} with the following initial-boundary value conditions
\begin{align}\label{eq2.2}
\begin{cases}
(\varrho, u, d)(x,0)=(\varrho_{0}, u_{0}, d_{0}), \quad x\in B_{R},\\
u(x,t)=0\quad  \frac{ \partial }{\partial\nu }d(x,t)=0,\quad x\in \partial B_{R}, t>0,
\end{cases}
\end{align}
has a unique solution $(\varrho, u, P,d)$ on $B_{R}\times (0,T_{R}]$ satisfying
\begin{align}\label{eq2.3}
\begin{cases}
\varrho\in C((0,T_{R}];H^{2}(B_{R}))\cap L^{\infty}(B_{R}\times(0,T_{R}]), \quad \varrho_{t}\in L^{\infty}(0,T;L^{2}(B_{R})),\\
u\in L^{\infty}(0,T_{R};H^{1}_{0}(B_{R})\cap H^{2}(B_{R})), \quad u_{t}\in L^{2}(0,T_{R};H_{0}^{1}(B_{R})),\\
P\in L^{\infty} (0,T_{R}; H^{1}(B_{R})),\\
d\in L^{\infty}(0,T_{R};H^{3}(B_{R})),\quad d_{t}\in L^{\infty}(0,T_{R};H^{1}(B_{R})).
\end{cases}
\end{align}
Here, $\nu=\frac{x}{R}$ (with $|x|=R$) is the  outer normal vector  on $\partial B_{R}$.
\end{lemma}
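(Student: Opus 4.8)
The plan is to construct the solution by a standard linearization-and-iteration argument, exploiting crucially that $\inf_{B_R}\varrho_0>0$ so that the problem is genuinely (uniformly) parabolic in $u$ with no degeneracy. Because $\operatorname{div} u=0$, the continuity equation reduces to the transport equation $\varrho_t+u\cdot\nabla\varrho=0$, so along the flow map generated by $u$ the density is merely rearranged; in particular $\inf_{B_R}\varrho(\cdot,t)=\inf_{B_R}\varrho_0>0$ and $\sup_{B_R}\varrho(\cdot,t)=\sup_{B_R}\varrho_0$ are preserved for as long as the solution exists. I would therefore fix constants $0<\underline{\varrho}\le\varrho_0\le\overline{\varrho}<\infty$ at the outset and carry them through every estimate.

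First I would set up the iteration. Starting from $(u^0,d^0)=(u_0,d_0)$, given the $k$-th iterate $(u^k,d^k)$ with $\operatorname{div} u^k=0$, I would define $(\varrho^{k+1},u^{k+1},P^{k+1},d^{k+1})$ by solving in succession three \emph{linear} problems on $B_R\times(0,T_R]$: the transport equation $\varrho^{k+1}_t+u^k\cdot\nabla\varrho^{k+1}=0$ with $\varrho^{k+1}(0)=\varrho_0$; the linear Stokes-type system
\begin{align*}
\varrho^{k+1}u^{k+1}_t+\varrho^{k+1}u^k\cdot\nabla u^{k+1}-\Delta u^{k+1}+\nabla P^{k+1}=-\operatorname{div}(\nabla d^k\odot\nabla d^k),\quad \operatorname{div} u^{k+1}=0,
\end{align*}
with $u^{k+1}|_{\partial B_R}=0$ and $u^{k+1}(0)=u_0$; and the linear parabolic equation $d^{k+1}_t+u^k\cdot\nabla d^{k+1}=\Delta d^{k+1}+|\nabla d^k|^2d^{k+1}$ with $\partial_\nu d^{k+1}|_{\partial B_R}=0$ and $d^{k+1}(0)=d_0$. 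Each is solvable in the class \eqref{eq2.3} by classical transport theory, by the theory of the non-degenerate generalized Stokes system with variable but uniformly positive and bounded coefficient $\varrho^{k+1}$, and by standard linear parabolic theory, respectively, with the pressure recovered as the solution of the elliptic problem obtained by taking the divergence of the momentum equation.

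The technical heart, and the main obstacle, is to derive a priori bounds on $(\varrho^{k+1},u^{k+1},P^{k+1},d^{k+1})$ in the norms of \eqref{eq2.3} that are \emph{uniform in $k$} on an interval $(0,T_R]$ whose length depends only on the data. I would proceed hierarchically: the basic energy estimate for $u^{k+1}$ and $\nabla d^{k+1}$, using that $\sqrt{\varrho^{k+1}}$ is comparable to $1$; then estimates for $\sqrt{\varrho^{k+1}}u^{k+1}_t$ and $\nabla^2u^{k+1}$ by testing the momentum equation with $u^{k+1}_t$ and invoking $H^2$-regularity for the Stokes operator, together with parabolic estimates for $d^{k+1}$ in $H^2\cap H^3$ obtained by differentiating and testing with $\Delta d^{k+1}$ and $\Delta^2 d^{k+1}$; and finally transport estimates propagating $\varrho_0\in H^1\cap L^\infty$ to $\varrho^{k+1}$ with $\varrho^{k+1}_t\in L^\infty_tL^2_x$. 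The strongly coupled and nonlinear terms $\operatorname{div}(\nabla d^k\odot\nabla d^k)$, $u^k\cdot\nabla d^{k+1}$ and $|\nabla d^k|^2d^{k+1}$ are the delicate ones: they must be absorbed using the Sobolev embeddings available on the bounded domain $B_R$ (where, unlike on $\mathbb{R}^2$, the full range of embeddings and interpolation inequalities is at one's disposal) and a Gr\"onwall argument, choosing $T_R$ small so that the nonlinear contributions stay bounded by a fixed multiple of the data. I expect the higher-order $d$-estimates to require the most care, since controlling $\nabla^3 d^{k+1}$ forces one to differentiate $|\nabla d^k|^2 d^{k+1}$ twice.

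With the uniform bounds in hand, I would show that $(u^k,d^k)\mapsto(u^{k+1},d^{k+1})$ is a contraction in a lower-order norm (e.g. $L^\infty_tL^2_x$ for the velocity and $L^\infty_tH^1_x$ for the orientation, with a companion $L^2$ estimate for $\varrho^{k+1}-\varrho^k$), again for $T_R$ small; subtracting consecutive equations yields a bound $\|\delta^{k+1}\|\le C\,T_R^{\,\theta}\|\delta^{k}\|$ for some $\theta>0$. The limit $(\varrho,u,P,d)$ inherits the high-order bounds by weak lower semicontinuity and solves \eqref{eq1.1}--\eqref{eq2.2}. It remains to restore the constraint $|d|=1$: setting $\phi\triangleq|d|^2-1$ and using $d\cdot\Delta d=\tfrac12\Delta|d|^2-|\nabla d|^2$, the limit $d$ satisfies $\phi_t+u\cdot\nabla\phi=\Delta\phi+2|\nabla d|^2\phi$ with $\phi(0)=0$ and homogeneous Neumann data, whence $\phi\equiv0$ by uniqueness. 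Finally, uniqueness of the full solution follows from the same difference estimates applied to two solutions with identical data and a Gr\"onwall inequality. Since all three constituent linear problems and the above manipulations are classical once $\inf\varrho_0>0$, the result parallels \cite{HWW12,LJK,WD}, and I would only sketch the uniform estimates, referring to those works for the routine parts.
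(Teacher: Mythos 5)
Your proposal is correct and follows essentially the same route the paper intends: the paper does not prove Lemma \ref{lem2.1} at all, but simply asserts that it follows ``by similar arguments as in \cite{HWW12,LJK,WD}'', and the linearize-and-iterate scheme you sketch (transport for $\varrho$, non-degenerate Stokes for $u$, linear parabolic for $d$, uniform short-time bounds, contraction in a lower-order norm, recovery of $|d|=1$ via the equation for $|d|^2-1$) is precisely the standard argument used in those references. Your verification that $\phi=|d|^2-1$ satisfies $\phi_t+u\cdot\nabla\phi=\Delta\phi+2|\nabla d|^2\phi$ and the use of $\inf\varrho_0>0$ to keep the momentum equation uniformly parabolic are the correct key points.
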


Next, for either $\Omega= \mathbb{R}^{2}$ or $\Omega=B_{R}$ with $R\geq 1$, the following weighted $L^{p}$-bounds for elements of the Hilbert space
$\widetilde{D}^{1,2}(\Omega)\triangleq \{v\in H^{1}_{loc}(\Omega) | \nabla u\in L^{2}(\Omega)\}$ can be found in Theorem B.1 of \cite{Lions1}.

\begin{lemma}\label{lem2.2}
For $m\in [2,\infty)$ and $\theta\in (\frac{1+m}{2}, \infty)$, there exists a positive constant $C$ such that for either  $\Omega= \mathbb{R}^{2}$ or $\Omega=B_{R}$ with $R\geq 1$, and for any $v\in \widetilde{D}^{1,2}(\Omega)$,
\begin{align*}
\left(\int_{\Omega}\frac{|v|^{m}}{e+|x|^{2}}(\ln (e+|x|^{2}))^{-\theta}\text{d}x\right)^{\frac{1}{m}}\leq C\|v\|_{L^{2}(B_{1})}+C\|\nabla v\|_{L^{2}(\Omega)}.
\end{align*}
\end{lemma}

A useful consequence of Lemma \ref{lem2.2} is the following weighted bounds for elements of $\widetilde{D}^{1,2}(\Omega)$, which have been proved in \cite{LJLZ,ZL}. It will play a crucial role in our following analysis.

\begin{lemma}\label{lem2.3}
Let $\bar{x}$ be as in Theorem \ref{thm1.2} and $\Omega$ as in Lemma \ref{lem2.2}. Assume that $\varrho\in L^{1}(\Omega)\cap L^{\infty}(\Omega)$
is a non-negative function such that
\begin{align*}
\int_{B_{N_{1}}} \varrho\text{d}x\geq M_{1}, \quad \|\varrho\|_{L^{1}(\Omega)\cap L^{\infty}(\Omega)}\leq M_{2},
\end{align*}
for positive constants $M_{1}$, $M_{2}$, and $N_{1}\geq 1$ with $B_{N_{1}}\subseteq \Omega$. Then for $\varepsilon >0$ and $\eta>0$, there is a positive constant $C$ depending only on $\varepsilon, \eta, M_{1}, M_{2}$, and $ N_{1}$ such that every $v\in \widetilde{D}^{1,2}(\Omega)$ satisfies
\begin{align*}
\|v\bar{x}^{-\eta}\|_{L^{\frac{2+\varepsilon}{\widetilde{\eta}}}(\Omega)}\leq
C\|\varrho^{\frac{1}{2}} v\|_{L^{2}(\Omega)}
+C\|\nabla v\|_{L^{2}(\Omega)}
\end{align*}
with $\widetilde{\eta}=\min\{1,\eta\}$.
\end{lemma}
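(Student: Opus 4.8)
The plan is to deduce Lemma~\ref{lem2.3} from Lemma~\ref{lem2.2} by choosing the exponents in the latter to match the weight $\bar{x}$, and then using the assumption $\int_{B_{N_1}}\varrho\,\text{d}x\ge M_1$ to replace the $L^2(B_1)$-norm on the right-hand side by $\|\varrho^{1/2}v\|_{L^2(\Omega)}$. First I would record the elementary fact that $\bar{x}\ge 1$ (indeed $\bar{x}=(e+|x|^2)^{1/2}\ln^2(e+|x|^2)\ge\sqrt{e}\cdot 1>1$ everywhere), so that $\bar{x}^{-\eta}\le\bar{x}^{-\widetilde{\eta}}$ with $\widetilde{\eta}=\min\{1,\eta\}$; this reduces the whole statement to the case $0<\eta\le 1$, i.e.\ $\widetilde{\eta}=\eta$, because the weight is everywhere smaller for the larger exponent. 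So henceforth assume $\eta\le 1$ and set $m=\frac{2+\varepsilon}{\eta}$.

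The core step is to apply Lemma~\ref{lem2.2} to the function $v$ with the exponent $m=\frac{2+\varepsilon}{\eta}\in[2,\infty)$ and a suitable $\theta$. I would compute the left-hand weight of Lemma~\ref{lem2.2}: its $m$-th power integrand is $|v|^m(e+|x|^2)^{-1}(\ln(e+|x|^2))^{-\theta}$, whereas the quantity I want to bound is $\int_\Omega |v|^m\bar{x}^{-\eta m}\,\text{d}x$ with $\bar{x}^{-\eta m}=(e+|x|^2)^{-\eta m/2}(\ln(e+|x|^2))^{-2\eta m}$. Since $\eta m=2+\varepsilon>2$, we have $(e+|x|^2)^{-\eta m/2}\le (e+|x|^2)^{-1}$ (the base is $\ge e>1$), and similarly the logarithmic factor is controlled because $2\eta m=2(2+\varepsilon)>\theta$ once $\theta$ is chosen, say, as $\theta=\frac{1+m}{2}+1$ or any admissible value; one checks $2\eta m$ exceeds this $\theta$ for the relevant range, so $\bar{x}^{-\eta m}\le (e+|x|^2)^{-1}(\ln(e+|x|^2))^{-\theta}$ pointwise. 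This pointwise domination gives
\begin{align*}
\|v\bar{x}^{-\eta}\|_{L^{m}(\Omega)}
\le C\|v\|_{L^2(B_1)}+C\|\nabla v\|_{L^2(\Omega)}.
\end{align*}

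It remains to replace $\|v\|_{L^2(B_1)}$ by $C\|\varrho^{1/2}v\|_{L^2(\Omega)}+C\|\nabla v\|_{L^2(\Omega)}$, and this is where the mass lower bound enters. On the bounded set $B_{N_1}\supseteq B_1$ I would use a Poincaré–type inequality with weight $\varrho$: writing $v=(v-\bar v)+\bar v$ where $\bar v$ is a $\varrho$-weighted average, the oscillation $\|v-\bar v\|_{L^2(B_{N_1})}$ is controlled by $C(N_1)\|\nabla v\|_{L^2(B_{N_1})}$, while the constant part is controlled by $|\bar v|\le C(M_1)\|\varrho^{1/2}v\|_{L^2}$ precisely because $\int_{B_{N_1}}\varrho\ge M_1>0$ prevents $\varrho$ from degenerating on a large set. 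Combining these yields $\|v\|_{L^2(B_1)}\le\|v\|_{L^2(B_{N_1})}\le C\|\varrho^{1/2}v\|_{L^2(\Omega)}+C\|\nabla v\|_{L^2(\Omega)}$ with $C=C(M_1,M_2,N_1)$. The main obstacle, and the step deserving the most care, is exactly this weighted Poincaré estimate on $B_{N_1}$: one must verify that the upper bound $\|\varrho\|_{L^\infty}\le M_2$ together with the lower mass bound $M_1$ suffices to make the $\varrho$-average a legitimate, uniformly-controlled surrogate for the true average, since $\varrho$ may vanish on part of $B_{N_1}$ (vacuum). Tracking the dependence of all constants only on $\varepsilon,\eta,M_1,M_2,N_1$—and not on $R$ or the infimum of $\varrho$—is what makes the estimate usable in the later uniform-in-$R$ approximation scheme.
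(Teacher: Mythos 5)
The paper does not actually prove Lemma \ref{lem2.3}; it cites \cite{LJLZ,ZL}, and the argument in those references is exactly the two-step route you propose (apply Lemma \ref{lem2.2} with a suitable exponent, then convert $\|v\|_{L^{2}(B_{1})}$ into $\|\varrho^{1/2}v\|_{L^{2}}+\|\nabla v\|_{L^{2}}$ via the mass lower bound and a Poincar\'e inequality on $B_{N_{1}}$). Your reduction to $\widetilde{\eta}=\eta\le 1$ using $\bar{x}\ge 1$ is fine, and your second step is the standard one: with either the unweighted average $\bar v$ over $B_{N_{1}}$ or the $\varrho$-weighted average, one gets $M_{1}|\bar v|\le\int_{B_{N_{1}}}\varrho|v|\,\text{d}x+\int_{B_{N_{1}}}\varrho|v-\bar v|\,\text{d}x\le M_{2}^{1/2}\|\varrho^{1/2}v\|_{L^{2}}+CM_{2}C(N_{1})\|\nabla v\|_{L^{2}}$, which is all that the hypotheses $M_{1},M_{2}$ are needed for; the possible vanishing of $\varrho$ causes no trouble here.

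There is, however, one concrete flaw in your justification of the pointwise weight comparison. You discard the full power $(e+|x|^{2})^{-\eta m/2}\le(e+|x|^{2})^{-1}$ and then try to absorb the logarithmic discrepancy by claiming $2\eta m=2(2+\varepsilon)>\theta$ with $\theta=\frac{1+m}{2}+1$. Since $m=\frac{2+\varepsilon}{\eta}$, the admissible $\theta$ must exceed $\frac{1+m}{2}$, which tends to $\infty$ as $\eta\to 0$, while $2\eta m=2(2+\varepsilon)$ stays bounded; already for $\eta\le\frac14$ your inequality $(2+\varepsilon)(4-\frac1\eta)>3$ fails, so the claimed domination of the log factors is false as written. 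The fix is to keep the surplus polynomial decay: $\bar{x}^{-\eta m}=(e+|x|^{2})^{-1}\,(e+|x|^{2})^{-\varepsilon/2}\,(\ln(e+|x|^{2}))^{-2(2+\varepsilon)}$, and for any fixed $\theta$ one has $(e+|x|^{2})^{-\varepsilon/2}\le C(\varepsilon,\theta)(\ln(e+|x|^{2}))^{-\theta+2(2+\varepsilon)}$, whence $\bar{x}^{-\eta m}\le C(e+|x|^{2})^{-1}(\ln(e+|x|^{2}))^{-\theta}$ for every admissible $\theta$. With that correction the rest of your argument goes through and yields the stated constant depending only on $\varepsilon,\eta,M_{1},M_{2},N_{1}$.
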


We shall still need to the following lemma obtained by Liang \cite{ZL}.

\begin{lemma}\label{lem2.4}
Let the assumptions in Lemma \ref{lem2.3} holds. Suppose in addition that $\varrho\bar{x}^{a}\in L^{1}(\Omega)$ with $a>1$. Then there is a constant $C$ depending on \textbf{$M_1,M_2$}, $N_{1}, a, r, \|\varrho\|_{L^{\infty}(\Omega)}$ and $\|\varrho\bar{x}^{a}\|_{L^{1}(\Omega)}$ such that for $\alpha>\frac{2}{r(2+a)}$ and $r\geq 2$
\begin{align}\label{eq2.4}
\|\varrho^{\alpha} v\|_{L^{r}(\Omega)}\leq C(\|\varrho^{\frac{1}{2}} v\|_{L^{2}(\Omega)}+\|\nabla v\|_{L^{2}(\Omega)}).
\end{align}
\end{lemma}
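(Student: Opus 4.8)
The plan is to prove Lemma \ref{lem2.4} by combining the weighted estimate of Lemma \ref{lem2.3} with the integrability gain that the hypothesis $\varrho\bar{x}^{a}\in L^{1}(\Omega)$ provides. The key observation is that $\|\varrho^{\alpha}v\|_{L^{r}(\Omega)}$ can be split using H\"older's inequality into a piece controlled by the weighted norm $\|v\bar{x}^{-\eta}\|_{L^{s}(\Omega)}$ from Lemma \ref{lem2.3} and a piece of the form $\|\varrho^{\alpha}\bar{x}^{\eta}\|_{L^{t}(\Omega)}$ that must be bounded purely in terms of the quantities $\|\varrho\|_{L^{\infty}}$ and $\|\varrho\bar{x}^{a}\|_{L^{1}}$. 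The idea is that the weight $\bar{x}^{a}$ carried by $\varrho$ in the $L^{1}$ sense can be traded against the negative weight $\bar{x}^{-\eta}$ needed to apply Lemma \ref{lem2.3}, provided we choose the exponents and the parameter $\eta$ compatibly; the restriction $\alpha>\frac{2}{r(2+a)}$ is exactly what guarantees such a choice exists.

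First I would fix an auxiliary exponent $\eta>0$ (to be chosen) and write, pointwise,
\begin{align*}
\varrho^{\alpha}|v| = \left(\varrho^{\alpha}\bar{x}^{\eta}\right)\left(|v|\bar{x}^{-\eta}\right).
\end{align*}
Applying H\"older's inequality with exponents $t$ and $s$ satisfying $\frac{1}{r}=\frac{1}{t}+\frac{1}{s}$ gives
\begin{align*}
\|\varrho^{\alpha}v\|_{L^{r}(\Omega)} \leq \|\varrho^{\alpha}\bar{x}^{\eta}\|_{L^{t}(\Omega)}\,\|v\bar{x}^{-\eta}\|_{L^{s}(\Omega)}.
\end{align*}
I would then arrange that $s$ equals the exponent $\frac{2+\varepsilon}{\widetilde\eta}$ appearing in Lemma \ref{lem2.3} for some admissible $\varepsilon>0$, so that the second factor is bounded by $C(\|\varrho^{\frac{1}{2}}v\|_{L^{2}(\Omega)}+\|\nabla v\|_{L^{2}(\Omega)})$. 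This pins down $s$ (and hence $t$) in terms of $\eta$ and $\varepsilon$, and it remains only to show that the first factor $\|\varrho^{\alpha}\bar{x}^{\eta}\|_{L^{t}(\Omega)}$ is finite and controlled by the stated constants.

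The main obstacle, and the place where the arithmetic of the exponents matters, is bounding $\|\varrho^{\alpha}\bar{x}^{\eta}\|_{L^{t}(\Omega)}$. Here I would estimate
\begin{align*}
\int_{\Omega}\varrho^{\alpha t}\bar{x}^{\eta t}\,\mathrm{d}x
\leq \|\varrho\|_{L^{\infty}(\Omega)}^{\alpha t-\beta}\int_{\Omega}\varrho^{\beta}\bar{x}^{\eta t}\,\mathrm{d}x,
\end{align*}
where $\beta\leq\alpha t$ is chosen so that $\varrho^{\beta}\bar{x}^{\eta t}\leq \varrho\,\bar{x}^{a}$ can be arranged, i.e. so that $\beta=1$ and $\eta t\leq a$. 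This forces the relation $\eta t\leq a$ together with the requirement $\alpha t\geq 1$, and eliminating $t$ between the constraint $\frac{1}{r}=\frac{1}{t}+\frac{\widetilde\eta}{2+\varepsilon}$ and these inequalities is exactly what yields the threshold $\alpha>\frac{2}{r(2+a)}$ (letting $\varepsilon\to0$ and $\eta$ approach its optimal value). The delicate point is checking that the open condition on $\alpha$ leaves genuine room to pick $\varepsilon>0$ and $\eta>0$ satisfying all constraints simultaneously, including $\theta=\eta s>\frac{1+s}{2}$ if one tracks back to Lemma \ref{lem2.2}; once such a choice is secured, the bound $\int_{\Omega}\varrho\bar{x}^{a}\,\mathrm{d}x=\|\varrho\bar{x}^{a}\|_{L^{1}(\Omega)}$ finishes the estimate of the first factor, and assembling the two pieces gives \eqref{eq2.4} with the constant depending on the asserted quantities.
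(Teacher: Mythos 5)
Your proof is correct: the Hölder splitting $\varrho^{\alpha}|v|=(\varrho^{\alpha}\bar{x}^{\eta})(|v|\bar{x}^{-\eta})$, the bound $\int_{\Omega}\varrho^{\alpha t}\bar{x}^{\eta t}\,\mathrm{d}x\leq\|\varrho\|_{L^{\infty}}^{\alpha t-1}\|\varrho\bar{x}^{a}\|_{L^{1}}$ under $\alpha t\geq 1$, $\eta t\leq a$, and the exponent bookkeeping (optimal $\eta=\frac{a(2+\varepsilon)}{r(2+\varepsilon+a)}$ giving the threshold $\frac{2+\varepsilon}{r(2+\varepsilon+a)}\to\frac{2}{r(2+a)}$ as $\varepsilon\to 0$) all check out. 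The paper does not prove this lemma but cites it from Liang's work, where the argument is essentially the one you give, so your proposal matches the standard proof.
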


Let us recall the following $L^{p}$-estimate for elliptic systems, whose proof is similar to that of \cite{CCK} (see Lemma 12). It is also a direct result of the combination of the well-known elliptic theory \cite{ADN} and  a standard scaling procedure.

\begin{lemma}\label{lem2.5}
For $p>1$ and $k> 0$, there exists a positive constant $C_{0}$ depending only on $p$ and $k$ such that
\begin{align*}
\|\nabla^{k+2} v\|_{L^{p}(B_{R})} \leq C_{0}\|\Delta v\|_{W^{k,p}(B_{R})}
\end{align*}
 for every $v\in W^{k+2,p}(B_{R})$ satisfying either
 \begin{align}\label{eq2.5}
 v\cdot x =0, \quad \operatorname{rot} v=0,\quad \text{ on }\partial B_{R}
 \end{align}
 or
 \begin{align*}
 n\cdot\nabla v=0, \quad \text{ on }\partial B_{R}.
 \end{align*}
 \end{lemma}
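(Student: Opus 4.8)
The plan is to reduce the estimate on the ball $B_R$ to the fixed unit ball $B_1$ by a dilation argument, the base estimate on $B_1$ being a consequence of the classical Agmon--Douglis--Nirenberg (ADN) $L^p$ theory for the Poisson equation $\Delta v=f$. First I would record the base case on $B_1$: for the Laplacian subject to either of the two boundary conditions, ADN regularity gives $\|v\|_{W^{k+2,p}(B_1)}\le C(\|\Delta v\|_{W^{k,p}(B_1)}+\|v\|_{L^p(B_1)})$. Since the top-order seminorm $\|\nabla^{k+2}\cdot\|_{L^p}$ annihilates the finite-dimensional kernel of each boundary value problem (one checks that for $\Delta v=0$ the condition $n\cdot\nabla v=0$ forces $v$ to be constant, while $v\cdot x=0$, $\operatorname{rot}v=0$ excludes all affine fields, so in either case the kernel consists of low-degree polynomials killed by $\nabla^{k+2}$), a standard compactness--contradiction argument removes the lower-order term and yields, on $B_1$,
\[
\|\nabla^{k+2}w\|_{L^p(B_1)}\le C_0\,\|\Delta w\|_{W^{k,p}(B_1)},\qquad C_0=C_0(p,k).
\]

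Next comes the scaling. Given $v$ on $B_R$, I set $w(y)=v(Ry)$ for $y\in B_1$. Because $x=Ry$ is radial on the boundary, the conditions transform covariantly: $v\cdot x=R\,(w\cdot y)$, and $\operatorname{rot}$ and $n\cdot\nabla$ pick up only positive powers of $R$, so each condition imposed at $\partial B_R$ becomes the same condition at $\partial B_1$. The derivatives scale as $\nabla_y^m w(y)=R^m(\nabla^m v)(Ry)$ and $\Delta_y w=R^2(\Delta v)(Ry)$, and changing variables $x=Ry$ (so $dy=R^{-2}dx$) gives
\[
\|\nabla_y^{j}(\Delta_y w)\|_{L^p(B_1)}=R^{\,2+j-2/p}\,\|\nabla^{j}\Delta v\|_{L^p(B_R)},\qquad \|\nabla_y^{k+2}w\|_{L^p(B_1)}=R^{\,k+2-2/p}\,\|\nabla^{k+2}v\|_{L^p(B_R)}.
\]
Substituting these into the base estimate and dividing through by $R^{\,k+2-2/p}$ produces
\[
\|\nabla^{k+2}v\|_{L^p(B_R)}\le C_0\sum_{j=0}^{k}R^{\,j-k}\,\|\nabla^{j}\Delta v\|_{L^p(B_R)}.
\]
Since $j-k\le 0$ for $0\le j\le k$ and $R\ge 1$ in the regime of interest, every factor $R^{\,j-k}\le 1$, whence the right-hand side is bounded by $C_0\,\|\Delta v\|_{W^{k,p}(B_R)}$ with the same constant $C_0$.

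The main obstacle is precisely the $R$-independence of $C_0$, which is what the homogeneity bookkeeping above is designed to secure: the various terms of $\|\Delta v\|_{W^{k,p}}$ scale with differing powers of $R$, but after normalization by the scaling of the left-hand side all those exponents become nonpositive, so no blow-up occurs as $R\to\infty$. A secondary technical point is the passage from the full ADN estimate to one involving only the top seminorm on the left; this is where identifying the kernel of each boundary value problem and noting that $\nabla^{k+2}$ kills it is essential, and it is exactly this structure that also makes the scaled problem admit the clean homogeneous bound.
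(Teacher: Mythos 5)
Your argument is correct and coincides with the route the paper itself indicates: it gives no detailed proof, only the remark that the lemma follows from the Agmon--Douglis--Nirenberg $L^{p}$ theory combined with a standard scaling procedure (citing Lemma 12 of \cite{CCK}), which is precisely your base-estimate-on-$B_{1}$-plus-dilation argument. The only point worth noting is that your final bound requires $R\geq 1$, but that is the regime in which the lemma is applied throughout the paper.
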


 Let $\mathcal{H}^{1}(\mathbb{R}^{2})$ and $BMO(\mathbb{R}^{2})$ stand for the usual Hardy and BMO spaces (see \cite{Stein}), we end this section by   the following useful lemma, whose proofs can be found in \cite{CLMS93}.

 \begin{lemma}\label{lem2.6}
 (i)\ There is a positive constant $C$ such that
 \begin{align*}
 \|E\cdot B\|_{\mathcal{H}^{1}(\mathbb{R}^{2})}\leq C\|E\|_{L^{2}(\mathbb{R}^{2})} \|B\|_{L^{2}(\mathbb{R}^{2})},
 \end{align*}
 for all $E\in L^{2}(\mathbb{R}^{2})$ and $B\in L^{2}(\mathbb{R}^{2})$ with
 \begin{align*}
 \operatorname{div} E =0, \quad \nabla^{\bot}\cdot B=0\quad \text{in } \mathcal{D}'(\mathbb{R}^{2}).
 \end{align*}
 (ii)\ There is a positive constant $C$ such that  for all $v\in \widetilde{D}^{1,2}(\mathbb{R}^{2})$, it holds
 \begin{align*}
 \|v\|_{BMO(\mathbb{R}^{2})}\leq C \|\nabla v\|_{L^{2}(\mathbb{R}^{2})}.
 \end{align*}
 \end{lemma}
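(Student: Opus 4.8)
The two assertions call for different tools: (ii) is the endpoint Sobolev embedding $\widetilde{D}^{1,2}(\mathbb{R}^{2})\hookrightarrow BMO(\mathbb{R}^{2})$ and follows from the Poincar\'e inequality, whereas (i) is the planar div--curl lemma and relies on the compensated structure $E\cdot B=\operatorname{div}(\phi E)$. For (ii) I would estimate the mean oscillation of $v$ over an arbitrary ball $Q=Q(x_{0},r)\subset\mathbb{R}^{2}$. Writing $\bar v_{Q}=|Q|^{-1}\int_{Q}v\,\text{d}x$, Jensen's inequality followed by the Poincar\'e inequality on $Q$ gives
\begin{align*}
\frac{1}{|Q|}\int_{Q}|v-\bar v_{Q}|\,\text{d}x
\le\Big(\frac{1}{|Q|}\int_{Q}|v-\bar v_{Q}|^{2}\,\text{d}x\Big)^{\frac{1}{2}}
\le C\Big(\frac{r^{2}}{|Q|}\int_{Q}|\nabla v|^{2}\,\text{d}x\Big)^{\frac{1}{2}}
\le C\|\nabla v\|_{L^{2}(\mathbb{R}^{2})},
\end{align*}
since $r^{2}/|Q|$ is a dimensionless constant in the plane. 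Taking the supremum over all balls $Q$ yields $\|v\|_{BMO(\mathbb{R}^{2})}\le C\|\nabla v\|_{L^{2}(\mathbb{R}^{2})}$.

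For (i) I would first introduce scalar potentials. Since $\mathbb{R}^{2}$ is simply connected, $\operatorname{div}E=0$ yields $E=\nabla^{\bot}\psi$ and $\nabla^{\bot}\cdot B=0$ yields $B=\nabla\phi$, with $\|\nabla\psi\|_{L^{2}}=\|E\|_{L^{2}}$ and $\|\nabla\phi\|_{L^{2}}=\|B\|_{L^{2}}$; in particular $\phi\in\widetilde{D}^{1,2}(\mathbb{R}^{2})$. Then $f:=E\cdot B=E\cdot\nabla\phi=\operatorname{div}(\phi E)$ by $\operatorname{div}E=0$, and $f\in L^{1}(\mathbb{R}^{2})$ with $\|f\|_{L^{1}}\le\|E\|_{L^{2}}\|B\|_{L^{2}}$. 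Fixing a Schwartz bump $\varphi$ with $\int\varphi=1$ and setting $\varphi_{t}=t^{-2}\varphi(\cdot/t)$, the cancellation $\int E(y)\cdot(\nabla\varphi_{t})(x-y)\,\text{d}y=0$ (again from $\operatorname{div}E=0$) lets me subtract the mean of $\phi$:
\begin{align*}
(f*\varphi_{t})(x)=\int_{\mathbb{R}^{2}}\big(\phi(y)-\bar\phi_{Q(x,t)}\big)\,E(y)\cdot(\nabla\varphi_{t})(x-y)\,\text{d}y.
\end{align*}

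Next, writing $Q=Q(x,Ct)$ and using the rapid decay of $\nabla\varphi$ (the distant dyadic annuli being controlled by a standard telescoping of the averages), H\"older's inequality with conjugate exponents $(s',s)$ together with the scale-invariant Sobolev--Poincar\'e inequality $\|\phi-\bar\phi_{Q}\|_{L^{s'}(Q)}\le C\|\nabla\phi\|_{L^{q}(Q)}$ with $q^{*}=s'$, i.e. $q=2s'/(s'+2)$, gives, after tracking the powers of $t$ which cancel exactly,
\begin{align*}
\sup_{t>0}|(f*\varphi_{t})(x)|\le C\big(\mathcal{M}(|B|^{q})(x)\big)^{\frac{1}{q}}\big(\mathcal{M}(|E|^{s})(x)\big)^{\frac{1}{s}},
\end{align*}
where $\mathcal{M}$ denotes the Hardy--Littlewood maximal operator. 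Choosing $s=q=4/3$ and $s'=4$ makes both exponents strictly below $2$; integrating, applying the Cauchy--Schwarz inequality and the boundedness of $\mathcal{M}$ on $L^{2/q}$ and $L^{2/s}$ (both larger than $1$) yields $\|\sup_{t}|f*\varphi_{t}|\|_{L^{1}}\le C\|E\|_{L^{2}}\|B\|_{L^{2}}$. By the maximal-function characterization of $\mathcal{H}^{1}(\mathbb{R}^{2})$ this is equivalent to $\|f\|_{\mathcal{H}^{1}}\le C\|E\|_{L^{2}}\|B\|_{L^{2}}$.

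The main obstacle is precisely the endpoint in (i). If one uses only the classical Poincar\'e inequality, one of the two factors remains an $L^{2}$-maximal function, and since $\mathcal{M}$ is merely of weak type $(1,1)$ the product then lands only in weak $L^{1}$, giving the weak Hardy space rather than $\mathcal{H}^{1}$. The device that resolves this is the replacement of Poincar\'e by the scale-invariant Sobolev--Poincar\'e inequality, combined with the H\"older splitting into conjugate exponents both below $2$, so that both resulting maximal functions act on $L^{p}$ with $p>1$; this is exactly what upgrades weak $L^{1}$ to $L^{1}$. All the details of this argument are carried out in \cite{CLMS93}.
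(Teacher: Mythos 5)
Your proposal is correct and is essentially the argument of Coifman--Lions--Meyer--Semmes, which is exactly where the paper sends the reader for this lemma (no proof is given in the text beyond the citation of \cite{CLMS93}). Both the Poincar\'e argument for the $BMO$ bound and the div--curl argument for the $\mathcal{H}^{1}$ bound --- writing $E\cdot B=\operatorname{div}(\phi E)$, exploiting the cancellation of $\nabla\varphi_{t}$ against the divergence-free field, and upgrading from weak $L^{1}$ to $L^{1}$ via the scale-invariant Sobolev--Poincar\'e inequality with both H\"older exponents below $2$ --- match the standard proof the paper relies on.
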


\section{Local existence and uniqueness of solutions}

In this section, we shall prove the following local existence and uniqueness of strong solutions to the Cauchy problem of \eqref{eq1.1}--\eqref{eq1.2}.

\begin{theorem}\label{thm3.1}
Assume that $(\varrho_{0}, u_{0}, d_{0})$ satisfies  \eqref{eq1.5}--\eqref{eq1.7}. Then there exist a small positive time $T_{0}>0$ and a unique strong solution $(\varrho, u, P, d)$ to the Cauchy problem of system \eqref{eq1.1}--\eqref{eq1.2} in $\mathbb{R}^{2}\times (0,T_{0}]$ satisfying \eqref{eq1.8} and \eqref{eq1.9} with $T=T_{0}$.
\end{theorem}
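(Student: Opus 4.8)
The plan is to prove Theorem~\ref{thm3.1} by the standard approximation-and-compactness scheme, using the bounded-domain existence result of Lemma~\ref{lem2.1} together with the weighted estimates of Lemmas~\ref{lem2.2}--\ref{lem2.4} to extract a limit on the whole space. First I would regularize the data: given $(\varrho_0,u_0,d_0)$ satisfying \eqref{eq1.5}--\eqref{eq1.7}, for each $R\ge1$ I set $\varrho_0^R=\varrho_0\ast j_{1/R}+R^{-1}e^{-|x|^2}$ (or a similar mollification) so that $\varrho_0^R$ is smooth and strictly bounded away from zero on $B_R$, restrict $u_0$ and $d_0$ to $B_R$ and correct them so that $\operatorname{div}u_0^R=0$, $u_0^R\in H_0^1\cap H^2$, $|d_0^R|=1$, and so that $d_{03}^R\ge\varepsilon_0$ is preserved. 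These approximate data satisfy \eqref{eq2.1}, so Lemma~\ref{lem2.1} furnishes a unique local solution $(\varrho^R,u^R,P^R,d^R)$ on $B_R\times(0,T_R]$ in the class \eqref{eq2.3}.

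The heart of the argument is to derive a priori bounds on $(\varrho^R,u^R,P^R,d^R)$ that are \emph{uniform in $R$} and in the lower bound of the initial density, on a common time interval $(0,T_0]$ with $T_0$ independent of $R$. I would first record the mass/energy identities: the transport equation gives $L^1\cap L^\infty$ control of $\varrho^R$ and propagation of the weight, $\|\varrho^R\bar{x}^a\|_{L^1}$, using the velocity bound and the logarithmic growth of $\bar{x}$; the basic energy estimate (testing the momentum equation by $u^R$ and the director equation by $-\Delta d^R-|\nabla d^R|^2d^R$, exploiting $|d^R|=1$) bounds $\|\varrho^{R\,1/2}u^R\|_{L^2}$ and $\|\nabla d^R\|_{L^2}$. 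Crucially, since the usual Sobolev embedding is critical on $\mathbb{R}^2$, I would use Lemma~\ref{lem2.4} to control $\|\varrho^{R\,\alpha}u^R\|_{L^r}$ by $\|\varrho^{R\,1/2}u^R\|_{L^2}+\|\nabla u^R\|_{L^2}$, and Lemma~\ref{lem2.3} for the weighted director estimate $\|\nabla d^R\bar{x}^{a/2}\|_{L^2}$ announced in the sketch before \eqref{eq3.5}. Testing the momentum equation by $u_t^R$ (or by $\dot u^R$) and using the $\mathcal{H}^1$--$BMO$ duality of Lemma~\ref{lem2.6} to absorb the pressure term yields short-time bounds on $\|\nabla u^R\|_{L^2}$, $\|\varrho^{R\,1/2}u_t^R\|_{L^2}$, and, via the Stokes estimate and Lemma~\ref{lem2.5}, on $\|\nabla^2 u^R\|_{L^2}$ and $\|\nabla P^R\|_{L^2}$; the parabolic director equation, differentiated and tested against $\Delta\nabla d^R$ as indicated around \eqref{eq4.09}, gives the corresponding bounds on $\|\nabla^2 d^R\|_{L^2}$, $\|\nabla d_t^R\|_{L^2}$, and $\|\nabla^3 d^R\|_{L^2}$. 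Since we only want \emph{local} existence, these estimates need only be closed by a Gronwall/continuity argument on a short interval $(0,T_0]$, so no smallness of the data is required.

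Once the uniform bounds are in place, I would pass to the limit $R\to\infty$: the $R$-independent estimates give weak-$\ast$ limits in the spaces of \eqref{eq1.8}, and the Aubin--Lions lemma provides strong convergence of $u^R,\nabla d^R$ in $L^2_{loc}$ sufficient to pass to the limit in the nonlinear terms $\varrho^R u^R\cdot\nabla u^R$, $\operatorname{div}(\nabla d^R\odot\nabla d^R)$, $u^R\cdot\nabla d^R$ and $|\nabla d^R|^2 d^R$; lower semicontinuity of norms preserves \eqref{eq1.8}, and the lower mass bound \eqref{eq1.9} survives because the uniform velocity control prevents mass from escaping to infinity too fast (this is where the constant $N_1$ enters, depending on $\|\varrho_0^{1/2}u_0\|_{L^2}$, $N_0$, $T$). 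The constraint $|d|=1$ is recovered in the limit from the maximum-principle structure of the director equation. Finally, uniqueness I would prove directly on $\mathbb{R}^2$ by an energy estimate on the difference of two solutions $(\varrho_1,u_1,P_1,d_1)$ and $(\varrho_2,u_2,P_2,d_2)$, estimating $\|\varrho_1^{1/2}(u_1-u_2)\|_{L^2}$, $\|\sqrt{\varrho_1}-\sqrt{\varrho_2}\|$-type quantities for the density, and $\|\nabla(d_1-d_2)\|_{L^2}$, again invoking the weighted inequality of Lemma~\ref{lem2.4} to close the Gronwall argument in the absence of a positive lower bound on the density.

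I expect the main obstacle to be exactly the passage from the bounded-domain estimates to $R$-independent bounds in the presence of vacuum as far-field density: the critical 2D Sobolev embedding fails to bound $\|u\|_{L^p}$ ($p>2$) by the energy, so every estimate in which the velocity appears undifferentiated must instead be routed through the weighted inequality $\|\varrho^\alpha v\|_{L^r}\le C(\|\varrho^{1/2}v\|_{L^2}+\|\nabla v\|_{L^2})$ of Lemma~\ref{lem2.4}, which in turn forces propagation of the spatial weight $\varrho\bar{x}^a$ and the director weight $\nabla d\,\bar{x}^{a/2}$ along the flow. Keeping these weighted quantities controlled uniformly in $R$, while simultaneously handling the strongly coupled terms $\operatorname{div}(\nabla d\odot\nabla d)$ and $|\nabla d|^2 d$ and the pressure via the Hardy--$BMO$ estimate, is the delicate point that dictates the whole structure of the proof.
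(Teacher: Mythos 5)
Your proposal follows essentially the same route as the paper: mollify/lift the data so that Lemma \ref{lem2.1} applies on $B_R$, derive $R$-independent short-time bounds by routing every undifferentiated occurrence of $u$ through the weighted inequalities of Lemmas \ref{lem2.3}--\ref{lem2.4} while propagating $\varrho\bar{x}^a$ and $\nabla d\,\bar{x}^{a/2}$, pass to the limit $R\to\infty$, and prove uniqueness by a Gronwall argument on the difference with a weighted $L^2$ bound on the density difference. The only cosmetic deviations (testing by $u_t$ versus $\dot u$ with the Hardy--$BMO$ duality, and the unnecessary preservation of the geometric condition $d_{03}\ge\varepsilon_0$, which the paper notes is not needed for the local result) do not change the argument.
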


\begin{remark}
We remark that in Theorem \ref{thm3.1}, we do not need the condition \eqref{eq1.3}.  From \cite{LJLZ,LXZ}, it is easy to see that if we replace $\bar{x}$ in Theorem \ref{thm1.2} by
\begin{align*}
\bar{x}\triangleq   (e+|x|^{2})^{\frac{1}{2}} \ln^{1+\eta_{0}} (e+|x|^{2}),
\end{align*}
with $\eta_{0}>0$, then the  result of Theorem \ref{thm3.1} still holds.
\end{remark}

\subsection{A priori estimates}
Throughout this subsection, for $p\in[1,\infty]$ and $k\geq 0$, we denote
\begin{align*}
\int f\text{d}x=\int_{B_{R}} f\text{d}x, \quad L^{p}=L^{p}(B_{R}),\quad W^{1,p}=W^{1,p}(B_{R}), \quad H^{k}=W^{k,2},
\end{align*}
for simplicity. Denote by $\|\cdot\|_{X}$
 the norm of the $X(B_{R})$-functions. Moreover, for $R>4N_{0}\geq 4$, in addition \eqref{eq2.2}, assume that $(\varrho_{0},u_{0},d_{0})$ satisfies
\begin{align}\label{eq3.1}
\frac{1}{2}\leq \int_{B_{N_{0}}}\varrho_{0}(x)\text{d}x\leq \int_{B_{R}} \varrho_{0}(x)\text{d}x\leq 1.
\end{align}
Then, from Lemma \ref{lem2.1}, we find that there exist some $T_{R}>0$ such that the initial-boundary value problem \eqref{eq1.1} and \eqref{eq2.2} has a unique strong solution $(\varrho, u, P, d)$ on $B_{R}\times [0,T_{R}]$ satisfying \eqref{eq2.3}.

For $\bar{x},  a$ and $q$ as in Theorem \ref{thm1.2}, the main goal of this subsection is to derive the following key a priori estimates on $\Phi(t)$ defined by
\begin{align*}
\Phi(t)\triangleq 1+\|\varrho^{\frac{1}{2}} u\|_{L^{2}}^{2}+\|\nabla u\|_{L^{2}}^{2}+\|\nabla d \bar{x}^{\frac{a}{2}}\|_{L^{2}}^{2}
+\|\nabla^{2} d\|_{L^{2}}^{2}+\|\varrho\bar{x}^{a}\|_{L^{1}\cap H^{1}\cap W^{1,q}},
\end{align*}
which are needed for the proof of Theorem \ref{thm3.1}.

\begin{proposition}\label{prop3.1}
Assume that $(\varrho_{0}, u_{0}, d_{0})$ satisfies \eqref{eq2.1} and \eqref{eq3.1}. Let $(\varrho, u, P, d)$ be the solution to the initial-boundary value problem \eqref{eq1.1} and \eqref{eq2.2} on $B_{R}\times [0,T_{R}]$ obtained by Lemma \ref{lem2.1}. Then there exist positive constants $T_{0}$ and $M$ both depending only on $a, q, N_{0}$, and $E_{0}$ such that
\begin{align}\label{eq3.2}
&\sup_{0\leq t\leq T_{0}}\left(\Phi(t)+t\|\varrho^{\frac{1}{2}} u_{t}\|_{L^{2}}^{2}+ t\| d_{t}\|_{H^{1}}^{2}
+t \|\nabla^{2} u\|_{L^{2}}^{2}+t\|\nabla P\|_{L^{2}}^{2}+t\|\nabla^{3} d\|_{L^{2}}^{2}\right)\nonumber\\
&+\int_{0}^{T_{0}}\left( \|\varrho^{\frac{1}{2}} u_{t}\|_{L^{2}}^{2}+\|\nabla^{2} u\|_{L^{2}}^{2}+ \|\nabla d_{t}\|_{H^{1}}^{2}
+\|\nabla^{3} d\|_{L^{2}}^{2} +\|\nabla^{2} d \bar{x}^{\frac{a}{2}}\|_{L^{2}}^{2} \right)\text{d}t\nonumber\\
&+\int_{0}^{T_{0}} \left( \|\nabla^{2} u\|_{L^{q}}^{\frac{q+1}{q}}+\|\nabla P\|_{L^{q}}^{\frac{1+q}{q}}+t\|\nabla^{2} u\|_{L^{q}}^{2}
+t\|\nabla P\|_{L^{q}}^{2}\right)\text{d}t\nonumber\\
&+ \int_{0}^{T_{0}} t\left( \|\nabla u_{t}\|_{L^{2}}^{2}+\|\nabla d_{t}\|_{H^{1}}^{2}\right)\text{d}t\leq M,
\end{align}
where
\begin{align*}
E_{0}\triangleq 1+\|\varrho_{0}^{\frac{1}{2}} u_{0}\|_{L^{2}}+\|\nabla u_{0}\|_{L^{2}}+\|\nabla d_{0} \bar{x}^{\frac{a}{2}}\|_{L^{2}}
+\|\nabla^{2} d_{0}\|_{L^{2}}+\|\varrho_{0}\bar{x}^{a}\|_{L^{1}\cap H^{1}\cap W^{1,p}}.
\end{align*}
\end{proposition}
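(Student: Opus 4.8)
The plan is to run a single closed a priori estimate on the solution $(\varrho,u,P,d)$ produced by Lemma~\ref{lem2.1}, taking care that every constant is \emph{independent} of both the radius $R$ and the initial lower bound $\inf_{B_R}\varrho_0$, and then to close the estimate on a short interval by a continuity (Gronwall) argument. Concretely, I would show that $\Phi(t)$ together with the time-weighted and time-integrated norms in \eqref{eq3.2} obeys a differential inequality of the schematic form $\frac{d}{dt}\Phi(t)\le C\,\Phi(t)^{\beta}$ with some $\beta>1$, so that $\Phi$ cannot exceed a fixed multiple of (a power of) $E_0$ before a time $T_0=T_0(a,q,N_0,E_0)$. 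The analytic linchpin throughout is Lemma~\ref{lem2.4}: since in \eqref{eq1.1} the velocity always occurs paired with $\varrho$ (in $\varrho u_t$, $\varrho u\cdot\nabla u$, and in $\varrho u\cdot\nabla\bar x^{a}$ arising from the transport of the weighted density), every genuinely nonlinear contribution can be routed through a bound of the type $\|\varrho^{\alpha}v\|_{L^{r}}\le C(\|\varrho^{1/2}v\|_{L^{2}}+\|\nabla v\|_{L^{2}})$ in \eqref{eq2.4}, compensating for the failure of the plain Sobolev embedding on the unbounded domain.

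\textbf{Preliminary and basic energy estimate.} First I would propagate the mass lower bound \eqref{eq1.9}: tracking $t\mapsto\int_{B_r}\varrho\,\mathrm dx$ against a cutoff and using the control of $\|\varrho^{1/2}u\|_{L^{2}}$ shows that $\inf_{t}\int_{B_{N_1}}\varrho\,\mathrm dx\ge\frac14$ holds on a short time for a suitable $N_1\ge N_0$, which is exactly the hypothesis needed to invoke Lemmas~\ref{lem2.3}--\ref{lem2.4}. The basic energy law then comes from testing $\eqref{eq1.1}_2$ with $u$ and $\eqref{eq1.1}_3$ with $-\Delta d$ (using $|d|=1$ and $\operatorname{div}(\nabla d\odot\nabla d)=\Delta d\cdot\nabla d+\tfrac12\nabla|\nabla d|^{2}$); the coupling terms cancel, leaving $L^{\infty}_t$ control of $\|\varrho^{1/2}u\|_{L^{2}}$ and $\|\nabla d\|_{L^{2}}$ and $L^{2}_t$ control of $\|\nabla u\|_{L^{2}}$ and $\|\Delta d+|\nabla d|^{2}d\|_{L^{2}}$.

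\textbf{First-order, weighted, and higher-order estimates.} To reach the $L^{\infty}_tL^{2}$ bounds on $\nabla u$ and $\nabla^{2}d$ in $\Phi$, I would test the momentum equation with the material derivative $\dot u=u_t+u\cdot\nabla u$ rather than $u_t$; this yields $\frac{d}{dt}\|\nabla u\|_{L^{2}}^{2}$ together with $\|\varrho^{1/2}\dot u\|_{L^{2}}^{2}$, and the troublesome term $\int P|\nabla u|^{2}\,\mathrm dx$ is controlled by the Hardy--BMO duality of Lemma~\ref{lem2.6}; one recovers $\|\varrho^{1/2}u_t\|_{L^2}$ via $u_t=\dot u-u\cdot\nabla u$. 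Simultaneously I would pair $\eqref{eq1.1}_3$ with $\Delta^{2}d$ (and, for the $\nabla d_t$ bound in $L^{2}_{t,x}$, with $\Delta\nabla d$ as flagged in the introduction), so that $u\cdot\nabla d$ and $|\nabla d|^{2}d$ are absorbed after integration by parts into $\||\nabla d||\nabla^{2}d|\|_{L^{2}}$-type quantities, again closed through Lemma~\ref{lem2.4}; this controls $\|\nabla^{2}d\|_{L^2}$, $\|\nabla^{3}d\|_{L^2}$ and $\|d_t\|_{H^1}$. The weighted piece $\|\nabla d\,\bar x^{a/2}\|_{L^{2}}$ is handled by pairing $\eqref{eq1.1}_3$ with $\nabla d\,\bar x^{a}$ and estimating the weight derivatives through $|\nabla\bar x^{a}|\le C\bar x^{a}$ and the weighted velocity bound. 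For the density group $\|\varrho\bar x^{a}\|_{L^{1}\cap H^{1}\cap W^{1,q}}$ I would use that $(\varrho\bar x^{a})_t+u\cdot\nabla(\varrho\bar x^{a})=\varrho u\cdot\nabla\bar x^{a}$: the $L^{1}$ and $H^{1}$ bounds follow directly, while the $W^{1,q}$ bound needs an $\int_0^t\|\nabla u\|_{W^{1,q}}$ control, which I would get from the Stokes elliptic estimate of Lemma~\ref{lem2.5} applied to $\eqref{eq1.1}_2$, namely $\|\nabla^{2}u\|_{L^{r}}+\|\nabla P\|_{L^{r}}\le C(\|\varrho\dot u\|_{L^{r}}+\||\nabla d||\nabla^{2}d|\|_{L^{r}})$ with the right-hand side bounded via Lemma~\ref{lem2.4}. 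Finally, multiplying the resulting differential inequalities by $t$ and integrating produces all the time-weighted terms ($t\|\varrho^{1/2}u_t\|_{L^2}^2$, $t\|\nabla^{2}u\|_{L^2}^2$, $t\|\nabla P\|_{L^2}^2$, $t\|\nabla^{3}d\|_{L^2}^2$, and $\int_0^{T_0}t\|\nabla u_t\|_{L^2}^2\,\mathrm dt$), which are needed precisely because no compatibility condition is imposed on $u_0$; assembling everything gives \eqref{eq3.2} on $[0,T_0]$.

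\textbf{Main obstacle.} I expect the hardest part to be the simultaneous closure of the strongly coupled nonlinearities while keeping all constants uniform in $R$ and in $\inf\varrho_0$. The harmonic-map terms $\operatorname{div}(\nabla d\odot\nabla d)$, $u\cdot\nabla d$ and $|\nabla d|^{2}d$ do not yield to the usual Ladyzhenskaya/Sobolev treatment on the plane, so the whole scheme depends on selecting multipliers ($\dot u$, $\Delta^{2}d$, $\Delta\nabla d$, $\nabla d\,\bar x^{a}$) whose nonlinear output factors through the weighted inequality \eqref{eq2.4}, together with careful bookkeeping of the spatial weight $\bar x$ as it is transported by $\varrho$ and propagated through the $\nabla d$ estimate.
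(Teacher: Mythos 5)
Your overall architecture --- a nonlinear Gronwall closure for $\Phi(t)$ on a short time interval, with every constant routed through the weighted inequality \eqref{eq2.4} so as to remain uniform in $R$ and in $\inf_{B_R}\varrho_0$ --- is exactly the paper's, as are the propagation of the mass lower bound (cf.\ \eqref{eq3.13}), the basic energy law, the weighted estimates on $\nabla d\,\bar x^{a/2}$ and $\varrho\bar x^{a}$, and the Stokes elliptic estimate feeding the $W^{1,q}$ bound on the weighted density. Two steps, however, do not go through as you describe them. First, you propose to test the momentum equation with $\dot u$ and to control $\int P\,\partial_i u_j\partial_j u_i\,\mathrm dx$ by the Hardy--BMO duality of Lemma~\ref{lem2.6}. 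That lemma is a whole-space statement (div-curl structure in $\mathcal{D}'(\mathbb{R}^2)$ together with $\|v\|_{BMO(\mathbb{R}^2)}\le C\|\nabla v\|_{L^2(\mathbb{R}^2)}$), whereas Proposition~\ref{prop3.1} is an estimate on $B_R$ with $u\in H^1_0(B_R)$ and $\frac{\partial d}{\partial\nu}=0$; invoking it there requires extension arguments with constants uniform in $R$ that you do not supply. The paper avoids the issue entirely: in the local-existence part it tests with $u_t$, so the pressure term vanishes by $\operatorname{div}u_t=0$, and the elastic stress is handled by writing $\int(\nabla d\odot\nabla d)\cdot\nabla u_t=\frac{d}{dt}\int(\nabla d\odot\nabla d)\cdot\nabla u-\cdots$ (see \eqref{eq3.16}--\eqref{eq3.19}); the $\dot u$/Hardy--BMO device is reserved for the time-independent global estimates of Section~4 on $\mathbb{R}^2$. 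The price of the $u_t$ multiplier --- bounds growing like $\exp\{C\int\Phi^\beta\}$ --- is harmless here, since only a short-time closure is needed.

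Second, the time-weighted dissipation terms $\int_0^{T_0}t\|\nabla u_t\|_{L^2}^2\,\mathrm dt$ and $\int_0^{T_0}t\|\nabla d_t\|_{H^1}^2\,\mathrm dt$ cannot be produced by ``multiplying the resulting differential inequalities by $t$'': the first-order estimates never contain $\|\nabla u_t\|_{L^2}^2$ or $\|\nabla^2 d_t\|_{L^2}^2$ as dissipation on their left-hand sides. One must differentiate $\eqref{eq1.1}_{2}$, $\eqref{eq1.1}_{3}$ and \eqref{eq3.8} in time, test with $u_t$, $d_t$ and $\nabla d_t$ respectively, balance the resulting cross terms (the $C_2$ bookkeeping in \eqref{eq3.27}--\eqref{eq3.31}), and only then multiply by $t$ and apply Gronwall; this is Lemma~\ref{lem3.5} and is an essential separate layer of the argument rather than a corollary of the first-order bounds. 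The bounds on $t\|\nabla^2 u\|_{L^2}^2$, $t\|\nabla P\|_{L^2}^2$ and $t\|\nabla^3 d\|_{L^2}^2$ are then read off from the elliptic estimates as in Lemma~\ref{lem3.6}. With these two repairs your outline matches the paper's proof.
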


To prove Proposition \ref{prop3.1}, whose proof will be postponed to the end of this subsection, we need to give the following useful estimates.

\begin{lemma}\label{lem3.2}
Under the assumptions of Proposition \ref{prop3.1}, let $(\varrho, u, P, d)$ be a smooth solution to the initial boundary value problem \eqref{eq1.1} and \eqref{eq2.2}. Then we have for all $t\geq 0$,
\begin{align}
        \label{eq3.3}
&\sup_{0\leq \tau\leq t} \left(
\|\varrho^{\frac{1}{2}} u\|_{L^{2}}^{2}+\|\nabla d\|_{L^{2}}^{2}\right)
+\int_{0}^{t} \|\nabla u\|_{L^{2}}^{2}+\|\Delta d+|\nabla d|^{2} d\|_{L^{2}}^{2}\text{d}\tau
\leq \|\varrho_{0}^{\frac{1}{2}} u_{0}\|_{L^{2}}^{2}+\|\nabla d_{0}\|_{L^{2}}^{2},\\
 \label{eq3.4}
&\qquad\quad\sup_{0\leq \tau\leq t} \left(
\|\varrho\|_{L^{1}\cap L^{\infty}}+\|\nabla d\|_{L^{2}}^{2}\right)
+\int_{0}^{t} \|\nabla^{2} d\|_{L^{2}}^{2}\text{d}\tau
\leq C\exp\left\{C\int_{0}^{t}\Phi(\tau)\text{d}\tau\right\},
\end{align}
and
\begin{align}\label{eq3.5}
\sup_{0\leq \tau\leq t} \left(\|\nabla d \bar{x}^{\frac{a}{2}}\|_{L^{2}}^{2} \right)
+\int_{0}^{t} \|\nabla^{2} d \bar{x}^{\frac{a}{2}}\|_{L^{2}}^{2}\text{d}\tau\leq C\exp\left\{\int_{0}^{t}\Phi(\tau)\text{d}\tau\right\}.
\end{align}
\end{lemma}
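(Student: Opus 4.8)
The plan is to prove the three bounds in turn, exploiting the boundary conditions $u|_{\partial B_R}=0$ and $\frac{\partial d}{\partial\nu}|_{\partial B_R}=0$ from \eqref{eq2.2} so that every integration by parts is free of boundary contributions, and the fact that, since $\operatorname{div} u=0$, the density is merely transported, $\varrho_t+u\cdot\nabla\varrho=0$, whence $\|\varrho(\cdot,\tau)\|_{L^1\cap L^\infty}=\|\varrho_0\|_{L^1\cap L^\infty}$ for all $\tau$. I will repeatedly use the two-dimensional Gagliardo--Nirenberg inequality $\|\nabla d\|_{L^4}^2\le C\|\nabla d\|_{L^2}\|\nabla^2 d\|_{L^2}$, the elliptic estimate $\|\nabla^2 d\|_{L^2}\le C\|\Delta d\|_{L^2}$ (Lemma \ref{lem2.5} with the Neumann-type condition $n\cdot\nabla d=0$), and the weighted bounds of Lemmas \ref{lem2.2}--\ref{lem2.4}.

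For the basic energy identity \eqref{eq3.3} I would test the momentum equation $\eqref{eq1.1}_2$ with $u$ and the director equation $\eqref{eq1.1}_3$ with the tension field $N\triangleq\Delta d+|\nabla d|^2 d$. In the first, the pressure term drops by $\operatorname{div} u=0$, and the density-weighted material-derivative terms combine, via the continuity equation, into $\tfrac12\tfrac{d}{dt}\|\varrho^{1/2}u\|_{L^2}^2$, leaving $\tfrac12\tfrac{d}{dt}\|\varrho^{1/2}u\|_{L^2}^2+\|\nabla u\|_{L^2}^2=\int(\nabla d\odot\nabla d):\nabla u$. In the second, $|d|=1$ forces $d\cdot d_t=0$ and $d\cdot(u\cdot\nabla d)=0$, so the $|\nabla d|^2 d$ contributions paired with $d_t$ and $u\cdot\nabla d$ vanish and one obtains $\tfrac12\tfrac{d}{dt}\|\nabla d\|_{L^2}^2+\|N\|_{L^2}^2=\int(u\cdot\nabla d)\cdot\Delta d$. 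The crux is the identity $\int(u\cdot\nabla d)\cdot\Delta d=-\int(\nabla d\odot\nabla d):\nabla u$, proved by integrating by parts and using $\operatorname{div} u=0$ to kill the term $\tfrac12\int u\cdot\nabla|\nabla d|^2$. Adding the two identities cancels the coupling terms exactly, giving $\tfrac12\tfrac{d}{dt}\left(\|\varrho^{1/2}u\|_{L^2}^2+\|\nabla d\|_{L^2}^2\right)+\|\nabla u\|_{L^2}^2+\|N\|_{L^2}^2=0$; integrating in $\tau$ yields \eqref{eq3.3}.

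For \eqref{eq3.4}, the $L^1\cap L^\infty$ bound on $\varrho$ is the conservation noted above. For the remaining terms I would test $\eqref{eq1.1}_3$ with $-\Delta d$, which gives $\tfrac12\tfrac{d}{dt}\|\nabla d\|_{L^2}^2+\|\Delta d\|_{L^2}^2=\int(u\cdot\nabla d)\cdot\Delta d-\int|\nabla d|^2 d\cdot\Delta d$. Rewriting the first term again as $-\int(\nabla d\odot\nabla d):\nabla u$ avoids any $L^p$-norm of the unweighted $u$; both terms are then majorized by $C\|\nabla d\|_{L^4}^2\left(\|\nabla u\|_{L^2}+\|\Delta d\|_{L^2}\right)$. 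Using the Gagliardo--Nirenberg and elliptic estimates above together with Young's inequality, the top-order factor $\|\Delta d\|_{L^2}^2$ is partly absorbed into the left side and what remains is controlled by $C\Phi\,\|\nabla d\|_{L^2}^2+C\Phi$; Gronwall's inequality then produces the exponential bound \eqref{eq3.4}.

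Estimate \eqref{eq3.5} is the weighted analogue and is where I expect the main difficulty. I would test $\eqref{eq1.1}_3$ with $-\Delta d\,\bar x^a$ (equivalently, differentiate and pair $\nabla d$ against the weight $\bar x^a$), so that the principal part reproduces $\|\nabla^2 d\,\bar x^{a/2}\|_{L^2}^2$ up to lower-order commutators generated by $\nabla\bar x^a$. The delicate points are: first, the transport term $\int(u\cdot\nabla d)\cdot\Delta d\,\bar x^a$, in which $u$ appears without an accompanying $\varrho$ and must be handled through the weighted Hardy-type inequalities of Lemmas \ref{lem2.2}--\ref{lem2.4}, writing $u=(u\bar x^{-\eta})\bar x^{\eta}$ and absorbing a factor of the weight into $\nabla d\,\bar x^{a/2}$; and second, the commutator terms carrying $\nabla\bar x^a$, which are tamed by the decay $|\nabla\bar x|\le C\bar x\,(e+|x|^2)^{-1/2}\ln(e+|x|^2)$, so that each such term loses a genuine negative power of $(e+|x|^2)$ relative to $\bar x^a$ and can be dominated by $C\Phi\,\|\nabla d\,\bar x^{a/2}\|_{L^2}^2$ plus a fraction of $\|\nabla^2 d\,\bar x^{a/2}\|_{L^2}^2$. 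After absorbing the top-order weighted term and invoking \eqref{eq3.3}--\eqref{eq3.4}, Gronwall's inequality yields \eqref{eq3.5}. The hardest step is thus the simultaneous control of the unweighted velocity in the transport term and of the weight-gradient commutators: keeping both subordinate to $\Phi\,\|\nabla d\,\bar x^{a/2}\|_{L^2}^2$, so that the effective Gronwall rate is exactly $\Phi$, is what makes the exponential bound close.
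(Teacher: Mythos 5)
Your proposal is correct and follows essentially the same route as the paper: the standard energy identity with the tension field $\Delta d+|\nabla d|^2d$ for \eqref{eq3.3}, transport of the density plus testing $\eqref{eq1.1}_{3}$ with $-\Delta d$ (handling the Neumann boundary contributions when passing from $\|\Delta d\|_{L^2}$ to $\|\nabla^2 d\|_{L^2}$) for \eqref{eq3.4}, and pairing the gradient of the director equation with $\nabla d\,\bar{x}^{a}$, controlling the unweighted velocity via the weighted Hardy-type inequalities of Lemmas \ref{lem2.2}--\ref{lem2.4} and the $\nabla\bar{x}^{a}$ commutators via the logarithmic decay of the weight, for \eqref{eq3.5}. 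The only cosmetic difference is that the paper multiplies the differentiated equation \eqref{eq3.8} by $\nabla d\,\bar{x}^{a}$ rather than testing with $-\Delta d\,\bar{x}^{a}$, which you correctly note is equivalent up to integration by parts.
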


\begin{proof}\label{proof of lem3.2}
\eqref{eq3.3} is the standard energy inequality, see, e.g., \cite{LLZ,LLW}. To prove \eqref{eq3.4}, we first notice that from the continuity equation $\eqref{eq1.1}_{1}$ and the divergence free condition $\eqref{eq1.1}_{4}$, it is easy to see (cf, \cite{Lions1}) that
\begin{align}\label{eq3.6}
\|\varrho(t)\|_{L^{p}}=\|\varrho_{0}\|_{L^{p}}, \quad \text{ for all } p\in [1,\infty]\text{ and } t\geq 0.
\end{align}
Next,  multiplying  $\eqref{eq1.1}_{3}$ by $-\Delta d$,   and integrating
over $B_{R}$, it follows that
\begin{align}\label{eq3.7}
\frac{1}{2}\frac{d}{dt}  \|\nabla d\|_{L^{2}}^{2}
 +&\frac{1}{2}\|\nabla^{2} d\|_{L^{2}}^{2}\leq \int |\nabla u||\nabla d||\nabla d|\text{d}x+\int |\nabla d|^{2}|\nabla^{2} d|\text{d}x
 +C\|\nabla d\|_{L^{2}}^{2}\nonumber\\
 \leq& \|\nabla u\|_{L^{2}} \|\nabla d\|_{L^{4}}^{2} +\|\nabla d\|_{L^{4}}^{2}\|\nabla^{2} d\|_{L^{2}}+C\|\nabla d\|_{L^{2}}^{2}\nonumber\\
 \leq& \frac{1}{4}\|\nabla^{2} d\|_{L^{2}}^{2}+C(1+\|\nabla u\|_{L^{2}}^{2}+\|\nabla d\|_{L^{2}}^{2})\|\nabla d\|_{L^{2}}^{2}\nonumber\\
 \leq& \frac{1}{4}\|\nabla^{2} d\|_{L^{2}}^{2}+C\Phi(t) \|\nabla d\|_{L^{2}}^{2},
\end{align}
where we have used the condition $|d|=1$, and the following inequalities
\begin{align*}
&\int |\Delta d|^{2}\text{d}x=\sum_{i,j=1}^{2} \int \partial_{i}^{2}d\cdot\partial_{j}^{2}d\text{d}x
=-\sum_{i,j=1}^{2}\int\partial_{i} d\cdot\partial_{i}\partial_{j}^{2}d\text{d}x\quad (\text{notice that } \frac{\partial}{\partial\nu}d=0 \text{ on } \partial B_{R}.)\nonumber\\
=&-\sum_{i,j=1}^{2}\int_{\partial B_{R}} \partial_{i}d\cdot\partial_{j}\partial_{i}d \nu_{j}\text{d}S+\sum_{i,j=1}^{2}\int |\partial_{i}\partial_{j} d|^{2}\text{d}x\quad \quad (\nu \text{ is defined in Lemma \ref{lem2.1}.})\nonumber\\
= &\|\nabla^{2} d\|_{L^{2}}^{2}+\sum_{i,j=1}^{2}\int_{\partial B_{R}} \partial_{i}d\cdot\partial_{j} d\partial_{i}\nu_{j}\text{d}S
= \|\nabla^{2} d\|_{L^{2}}^{2}+\frac{1}{R}\sum_{i,j=1}^{2}\int_{\partial B_{R}} \partial_{i}d\cdot\partial_{j} d\delta_{ij}\text{d}S\nonumber\\
\geq &\|\nabla^{2} d\|_{L^{2}}^{2}-\|\nabla d\|_{L^{2}(\partial B_{R})}^{2}
\geq \|\nabla^{2} d\|_{L^{2}}^{2}-C\|\nabla d\|_{{H}^{\frac{1}{2}}( B_{R})}^{2} \quad (\text{see, e.g., \cite{OAL} for trace embedding}  )\nonumber\\
\geq & \|\nabla^{2} d\|_{L^{2}}^{2}-C\|\nabla d\|_{L^{2}}\|\nabla^{2}d\|_{L^{2}}
\geq \frac{1}{2}\|\nabla^{2} d\|_{L^{2}}^{2}-C\|\nabla d\|_{L^{2}}^{2}.
\end{align*}
Here $\delta_{ij}=1$ if $i=j$ (or $=0$ if $i\neq j$). The estimate \eqref{eq3.7}  together with Gronwall's inequality ensures
\begin{align*}
\sup_{0\leq \tau\leq t} \left(\|\nabla d\|_{L^{2}}^{2}\right)
+\int_{0}^{t} \|\nabla^{2} d\|_{L^{2}}^{2}\text{d}\tau\leq \exp\left\{C\int_{0}^{t}\Phi(\tau)\text{d}\tau\right\}.
\end{align*}
This combined with \eqref{eq3.6} leads to \eqref{eq3.4}.

In what follows, we are in a position to prove \eqref{eq3.5}. By applying $\nabla $ on $\eqref{eq1.1}_{3}$, we have
\begin{align}\label{eq3.8}
\nabla d_{t}-\Delta\nabla d=-\nabla(u\cdot \nabla d)+\nabla(|\nabla d|^{2}d).
\end{align}
Multiplying \eqref{eq3.8} with $\nabla d \bar{x}^{a}$ and integrating by parts over $B_{R}$ yield
\begin{align}\label{eq3.9}
\frac{1}{2}\frac{d}{dt} \|\nabla d \bar{x}^{\frac{a}{2}}\|_{L^{2}}^{2} +\|\nabla^{2} d \bar{x}^{\frac{a}{2}}\|_{L^{2}}^{2}
\leq& C\!\!\int \!\!|\nabla d| |\nabla^{2} d| \nabla \bar{x}^{a} \text{d}x\! +\!\!\int \! \! |\nabla u||\nabla d|^{2} \bar{x}^{a}
\text{d}x +\!\!\int\! |u||\nabla d|^{2} \nabla \bar{x}^{a}\text{d}x \nonumber\\
&+\!\!\int\! \! |\nabla d|^{2} |\nabla^{2} d| \bar{x}^{a}\text{d}x
\!+\!\!\int\! |\nabla d|^{3} \nabla\bar{x}^{a}\text{d}x\!+\!C\|\nabla d \bar{x}^{\frac{a}{2}}\|_{L^{2}}^{2}\nonumber\\
\triangleq& I_{1}+I_{2}+I_{3}+I_{4}+I_{5}+\!C\|\nabla d \bar{x}^{\frac{a}{2}}\|_{L^{2}}^{2},
\end{align}
where we have used the following inequalities
\begin{align*}
&-\int \nabla\Delta d\cdot\nabla d\bar{x}^{a}\text{d}x=-\sum_{i,j=1}^{2}\int \partial_{i}\partial_{j}^{2}d\cdot\partial_{i}d\bar{x}^{a}\text{d}x\nonumber\\
=& -\sum_{i,j=1}^{2}\int_{\partial B_{R}} \partial_{i}\partial_{j}d\cdot\partial_{i} d\bar{x}^{a} \nu_{j}\text{d}S+ \sum_{i,j=1}^{2} \int |\partial_{i}\partial_{j} d|^{2} \bar{x}^{a}\text{d}x +\sum_{i,j=1}^{2} \int \partial_{i}\partial_{j} d\cdot \partial_{i}d  \partial_{j}\bar{x}^{a}\text{d}x\nonumber\\
=&\|\nabla^{2}d\bar{x}^{\frac{a}{2}}\|_{L^{2}}^{2}+\sum_{i,j=1}^{2} \int \partial_{i}\partial_{j} d\cdot \partial_{i}d  \partial_{j}\bar{x}^{a}\text{d}x
+\sum_{i,j=1}^{2}\int_{\partial B_{R}} \partial_{i} d\cdot\partial_{i} d\bar{x}^{a} \partial_{i}\nu_{j}\text{d}S\nonumber\\
=&\|\nabla^{2}d\bar{x}^{\frac{a}{2}}\|_{L^{2}}^{2}+\sum_{i,j=1}^{2} \int \partial_{i}\partial_{j} d \cdot\partial_{i}d  \partial_{j}\bar{x}^{a}\text{d}x
+\frac{1}{R}\sum_{i,j=1}^{2}\int_{\partial B_{R}} \partial_{i} d\cdot\partial_{i} d\bar{x}^{a} \delta_{ij}\text{d}S\nonumber\\
\geq& \|\nabla^{2}d\bar{x}^{\frac{a}{2}}\|_{L^{2}}^{2} -C\int |\nabla^{2} d| |\nabla d|\nabla \bar{x}^{a}\text{d}x -\|\nabla d\bar{x}^{a}\|_{L^{2}(\partial B_{R})}^{2}\nonumber\\
\geq& \|\nabla^{2}d\bar{x}^{\frac{a}{2}}\|_{L^{2}}^{2} -C\int |\nabla^{2} d| |\nabla d|\nabla \bar{x}^{a}\text{d}x -C\|\nabla d\bar{x}^{a}\|_{{H}^{\frac{1}{2}}( B_{R})}^{2}\nonumber\\
\geq& \frac{1}{2}\|\nabla^{2}d\bar{x}^{\frac{a}{2}}\|_{L^{2}}^{2} -C\int |\nabla^{2} d| |\nabla d|\nabla \bar{x}^{a}\text{d}x -C\|\nabla d\bar{x}^{a}\|_{L^{2}}^{2}.
\end{align*}
We can bound each term on the right-hand side of \eqref{eq3.9} as follows
\begin{align*}
I_{1}\leq& \int |\nabla d| |\nabla^{2} d| \bar{x}^{a}\text{d}x\leq \frac{1}{10} \|\nabla^{2} d \bar{x}^{\frac{a}{2}}\|_{L^{2}}^{2}
+C\|\nabla d \bar{x}^{\frac{a}{2}}\|_{L^{2}}^{2};\\
I_{2}\leq& \|\nabla u\|_{L^{2}} \|\nabla d \bar{x}^{\frac{a}{2}}\|_{L^{4}}^{2}
\leq \|\nabla u\|_{L^{2}}\|\nabla d \bar{x}^{\frac{a}{2}}\|_{L^{2}}(\|\nabla d \bar{x}^{\frac{a}{2}}\|_{L^{2}}+\|\nabla (\nabla d \bar{x}^{\frac{a}{2}})\|_{L^{2}})\\
\leq& \|\nabla u\|_{L^{2}} \|\nabla d \bar{x}^{\frac{a}{2}}\|_{L^{2}} (\|\nabla d \bar{x}^{\frac{a}{2}}\|_{L^{2}}+\|\nabla^{2}d \bar{x}^{\frac{a}{2}}\|_{L^{2}}+\|\nabla d \nabla \bar{x}^{\frac{a}{2}}\|_{L^{2}})\\
\leq & \frac{1}{10} \|\nabla^{2} d \bar{x}^{\frac{a}{2}}\|_{L^{2}}^{2}+C(1+\|\nabla u\|_{L^{2}}^{2})
 \|\nabla d\bar{x}^{\frac{a}{2}}\|_{L^{2}}^{2}
\leq  \frac{1}{10} \|\nabla^{2} d \bar{x}^{\frac{a}{2}}\|_{L^{2}}^{2}+C\Phi(t) \|\nabla d\bar{x}^{\frac{a}{2}}\|_{L^{2}}^{2};\\
I_{3}\leq&\int |u||\nabla d|\bar{x}^{a-\frac{3}{4}}\text{d}x\leq \|\nabla d \bar{x}^{\frac{a}{2}}\|_{L^{4}} \|\nabla d \bar{x}^{\frac{a}{2}}\|_{L^{2}}\|u\bar{x}^{-\frac{3}{4}}\|_{L^{4}}\\
\leq&  C\|\nabla d \bar{x}^{\frac{a}{2}}\|_{L^{4}}^{2} + C(\|\varrho^{\frac{1}{2}} u\|_{L^{2}}^{2}+\|\nabla u\|_{L^{2}}^{2})\|\nabla d \bar{x}^{\frac{a}{2}}\|_{L^{2}}^{2}
\leq  \frac{1}{10} \|\nabla^{2} d \bar{x}^{\frac{a}{2}}\|_{L^{2}}^{2}+C\Phi(t) \|\nabla d\bar{x}^{\frac{a}{2}}\|_{L^{2}}^{2};\\
I_{4}\leq&\|\nabla^{2} d \bar{x}^{\frac{a}{2}}\|_{L^{2}}
\|\nabla d\bar{x}^{\frac{a}{2}}\|_{L^{4}} \|\nabla d\|_{L^{4}}\leq \frac{1}{20}\|\nabla^{2} d \bar{x}^{\frac{a}{2}}\|_{L^{2}}^{2}
+ \|\nabla d\|_{L^{4}}^{2}\|\nabla d\bar{x}^{\frac{a}{2}}\|_{L^{4}}^{2}\\
\leq& \frac{1}{20}\|\nabla^{2} d \bar{x}^{\frac{a}{2}}\|_{L^{2}}^{2}
+ (\|\nabla d\|_{L^{2}}^{2}+\|\nabla^{2} d\|_{L^{2}}^{2})\|\nabla d\bar{x}^{\frac{a}{2}}\|_{L^{2}}(\|\nabla^{2}d \bar{x}^{\frac{a}{2}}\|_{L^{2}}+\|\nabla d \nabla \bar{x}^{\frac{a}{2}}\|_{L^{2}})\\
\leq& \frac{1}{10}\|\nabla^{2} d \bar{x}^{\frac{a}{2}}\|_{L^{2}}^{2}
+ C\Phi(t) \|\nabla d\bar{x}^{\frac{a}{2}}\|_{L^{2}}^{2};\\
I_{5}\leq&\int |\nabla d|^{3}\bar{x}^{a}\text{d}x\leq \|\nabla d\|_{L^{2}}\|\nabla d \bar{x}^{\frac{a}{2}}\|_{L^{4}}^{2}
\leq \frac{1}{10}\|\nabla^{2} d \bar{x}^{\frac{a}{2}}\|_{L^{2}}^{2}
+ C\Phi(t) \|\nabla d\bar{x}^{\frac{a}{2}}\|_{L^{2}}^{2},
\end{align*}
due to the energy inequality \eqref{eq3.3}, H\"{o}lder's and Gagliardro-Nirenberg inequalities. Hence, inserting the estimates of $I_{i} (i=1,2,\cdots,5)$ into \eqref{eq3.9}, we
obtain \eqref{eq3.5} after by using Gronwall's inequality. This completes the proof of Lemma \ref{lem3.2}.
\end{proof}

\begin{lemma}\label{lem3.3}
Under the assumptions of Proposition \ref{prop3.1}, let $(\varrho, u, P, d)$ be a smooth solution to the initial boundary value problem \eqref{eq1.1} and \eqref{eq2.2}. Then there exists a $T_{1}=T({N_{0},E_{0}})>0$ such that for all $t\in (0,T_{1}]$,
\begin{align}\label{eq3.10}
&\sup_{0\leq \tau\leq t} \left( \|\varrho \bar{x}^{a}\|_{L^{1}}+\|\nabla u\|_{L^{2}}^{2}+\|\nabla^{2} d\|_{L^{2}}^{2} \right)
\nonumber\\
&\quad \quad +\int_{0}^{t}\left(\|\varrho^{\frac{1}{2}} u_{\tau}\|_{L^{2}}^{2}+\|\nabla d_{\tau} \|_{L^{2}}^{2}
+\|\Delta\nabla d\|_{L^{2}}^{2}\right)\text{d}\tau\leq C\exp\left\{C\int_{0}^{t}\Phi(\tau)\text{d}\tau\right\},\\
  \label{eq3.11}
&\int_{0}^{t}\|\nabla^{2} u\|_{L^{2}}^{2}\text{d}\tau\leq C\exp\left\{C\int_{0}^{t}\Phi(\tau)\text{d}\tau\right\}.
\end{align}
\end{lemma}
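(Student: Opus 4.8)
The plan is to establish \eqref{eq3.10} and \eqref{eq3.11} by working one differentiation level above the basic bounds of Lemma \ref{lem3.2}, controlling $\|\nabla u\|_{L^2}$ and $\|\nabla^2 d\|_{L^2}$ simultaneously with the dissipative quantities $\|\varrho^{1/2}u_t\|_{L^2}$, $\|\nabla d_t\|_{L^2}$ and $\|\Delta\nabla d\|_{L^2}$. Because vacuum is allowed, there is no pointwise lower bound for $\varrho$, so every term of the form $\|\varrho^{\alpha}u\|_{L^r}$ or $\|u\bar x^{-\eta}\|_{L^r}$ must be routed through the weighted inequalities of Lemmas \ref{lem2.3}--\ref{lem2.4}, which require $\int_{B_{N_1}}\varrho\,dx$ to remain bounded below. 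Hence the first step is to fix the short time $T_1=T(N_0,E_0)$ on which the density cannot escape: testing the continuity equation $\eqref{eq1.1}_1$ against a cutoff $\varphi$ equal to $1$ on $B_{N_0}$ gives $\frac{d}{dt}\int\varrho\varphi\,dx=\int\varrho u\cdot\nabla\varphi\,dx$, whose right-hand side is $\le \frac{C}{N_0}\|\varrho^{1/2}\|_{L^2}\|\varrho^{1/2}u\|_{L^2}$ and thus bounded via the energy estimate \eqref{eq3.3}; this yields $\int_{B_{N_1}}\varrho\,dx\ge 1/4$ for $t\le T_1$, i.e. \eqref{eq1.9}, and unlocks Lemmas \ref{lem2.3}--\ref{lem2.4} for the rest of the proof.

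Next I would bound $\|\varrho\bar x^a\|_{L^1}$. Multiplying $\eqref{eq1.1}_1$ by $\bar x^a$ and integrating by parts with $\operatorname{div}u=0$ gives $\frac{d}{dt}\int\varrho\bar x^a\,dx=\int\varrho u\cdot\nabla\bar x^a\,dx$, and since a direct computation shows $|\nabla\bar x^a|\le C\bar x^{a-3/4}$, the right-hand side is dominated by $C\int\varrho|u|\bar x^{a-3/4}\,dx$. Splitting the weight and applying Lemma \ref{lem2.4} to convert the factor $\|\varrho^{\alpha}u\|_{L^r}$ into $\|\varrho^{1/2}u\|_{L^2}+\|\nabla u\|_{L^2}$ bounds this by $C\Phi(t)(1+\|\varrho\bar x^a\|_{L^1})$, after which Gronwall's inequality produces the $\|\varrho\bar x^a\|_{L^1}$ part of \eqref{eq3.10}.

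The core is the coupled estimate for $\nabla u$ and $\nabla^2 d$. I would test the momentum equation $\eqref{eq1.1}_2$ with $u_t$: the viscous and pressure terms give $\frac{d}{dt}\frac12\|\nabla u\|_{L^2}^2+\|\varrho^{1/2}u_t\|_{L^2}^2$ (the pressure drops since $\operatorname{div}u_t=0$ and $u_t|_{\partial B_R}=0$), and the convection term $\int\varrho(u\cdot\nabla u)\cdot u_t\,dx$ is absorbed by $\frac14\|\varrho^{1/2}u_t\|_{L^2}^2+C\|\varrho^{1/2}|u||\nabla u|\|_{L^2}^2$, the last factor handled by Gagliardo--Nirenberg together with the weighted control of $\varrho^{1/2}u$ from Lemma \ref{lem2.4}. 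In parallel, for the director I would use the differentiated equation \eqref{eq3.8} and test against $\Delta\nabla d$ (equivalently multiply $\eqref{eq1.1}_3$ by $\Delta^2 d$), yielding $\frac{d}{dt}\frac12\|\nabla^2 d\|_{L^2}^2+\|\Delta\nabla d\|_{L^2}^2$ against $\nabla(u\cdot\nabla d)$ and $\nabla(|\nabla d|^2 d)$, which after integration by parts, the constraint $|d|=1$ and $L^4$-interpolation are either absorbed into $\|\Delta\nabla d\|_{L^2}^2$ or dominated by $C\Phi(t)(\|\nabla u\|_{L^2}^2+\|\nabla^2 d\|_{L^2}^2)$. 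The hard part will be the liquid-crystal coupling term $\int\operatorname{div}(\nabla d\odot\nabla d)\cdot u_t\,dx$, which naively produces the uncontrollable $\nabla u_t$; I would defeat it by rewriting $-\operatorname{div}(\nabla d\odot\nabla d)=-\Delta d\cdot\nabla d-\nabla(\tfrac12|\nabla d|^2)$ (merging the gradient into the pressure) and then substituting $\Delta d=d_t+u\cdot\nabla d-|\nabla d|^2 d$ from $\eqref{eq1.1}_3$, which turns the bad term into expressions in $d_t,\nabla d,u,\nabla d_t$ that pair with the director dissipation and close via Cauchy--Schwarz, the weighted inequalities, and the smallness of $T_1$. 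Once $\nabla d_t$ surfaces it is reconstructed from $\nabla d_t=\Delta\nabla d-\nabla(u\cdot\nabla d)+\nabla(|\nabla d|^2 d)$, so $\int_0^t\|\nabla d_\tau\|_{L^2}^2$ is controlled by $\int_0^t\|\Delta\nabla d\|_{L^2}^2$ plus already-estimated terms; adding the two identities and applying Gronwall then closes \eqref{eq3.10}.

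Finally, \eqref{eq3.11} follows from a stationary Stokes estimate. Viewing $\eqref{eq1.1}_2$ as $-\Delta u+\nabla P=-\varrho u_t-\varrho u\cdot\nabla u-\operatorname{div}(\nabla d\odot\nabla d)$ with $\operatorname{div}u=0$ and the boundary conditions of the problem, Lemma \ref{lem2.5} gives $\|\nabla^2 u\|_{L^2}+\|\nabla P\|_{L^2}\le C\|\,\text{RHS}\,\|_{L^2}$. I would bound $\|\varrho u_t\|_{L^2}\le\|\varrho\|_{L^\infty}^{1/2}\|\varrho^{1/2}u_t\|_{L^2}$, the convection term again through Lemma \ref{lem2.4} and Gagliardo--Nirenberg, and $\|\operatorname{div}(\nabla d\odot\nabla d)\|_{L^2}\le C\||\nabla d||\nabla^2 d|\|_{L^2}\le C\|\nabla d\|_{L^4}\|\nabla^2 d\|_{L^4}$, interpolating $\|\nabla^2 d\|_{L^4}\le C\|\nabla^2 d\|_{L^2}^{1/2}\|\Delta\nabla d\|_{L^2}^{1/2}$; integrating in time and inserting the bounds just obtained in \eqref{eq3.10}, in particular $\int_0^t\|\Delta\nabla d\|_{L^2}^2\,d\tau$, yields \eqref{eq3.11}, again up to the factor $\exp\{C\int_0^t\Phi(\tau)\,d\tau\}$.
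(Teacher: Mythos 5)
Your overall architecture matches the paper's: first pin down the time $T_1$ on which $\int_{B_{2N_0}}\varrho\,dx\geq\frac14$ so that Lemmas \ref{lem2.3}--\ref{lem2.4} apply, then Gronwall for $\|\varrho\bar x^a\|_{L^1}$, then a coupled energy estimate at the level of $u_t$ and $\Delta\nabla d$, and finally the Stokes estimate for \eqref{eq3.11}. However, your treatment of the coupling term is where the argument breaks. After writing $-\operatorname{div}(\nabla d\odot\nabla d)=-\nabla d\cdot\Delta d-\nabla(\tfrac12|\nabla d|^2)$ and substituting $\Delta d=d_t+u\cdot\nabla d-|\nabla d|^2d$, you are left with terms such as $\int(\nabla d\cdot d_t)\cdot u_t\,dx$ and $\int(\nabla d\cdot(u\cdot\nabla d))\cdot u_t\,dx$, in which $u_t$ appears \emph{bare}: with neither a factor of $\varrho^{1/2}$ nor a gradient. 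In the vacuum setting this is fatal at this stage of the hierarchy: the only dissipation produced by testing $\eqref{eq1.1}_2$ with $u_t$ is $\|\varrho^{1/2}u_t\|_{L^2}^2$, and the weighted inequality of Lemma \ref{lem2.3} converts $\|u_t\bar x^{-\eta}\|_{L^r}$ into $\|\varrho^{1/2}u_t\|_{L^2}+\|\nabla u_t\|_{L^2}$, where $\|\nabla u_t\|_{L^2}$ is not controlled until Lemma \ref{lem3.5} and even there only with a factor of $t$. So these terms cannot ``pair with the director dissipation'' as you claim; they pair with nothing available.

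The paper's way around this is different and essential: it keeps the coupling term in the form $\int(\nabla d\odot\nabla d)\cdot\nabla u_t\,dx$, pulls the time derivative out as $\frac{d}{dt}\int(\nabla d\odot\nabla d)\cdot\nabla u\,dx-2\int(\nabla d_t\odot\nabla d)\cdot\nabla u\,dx$, absorbs the first piece into the modified energy $B(t)=\|\nabla u\|_{L^2}^2-\int(\nabla d\odot\nabla d)\cdot\nabla u\,dx$ (which is comparable to $\|\nabla u\|_{L^2}^2$ up to $\|\nabla d\|_{H^1}^2$), and bounds the second by $\tfrac14\|\nabla d_t\|_{L^2}^2+C\|\nabla d\|_{L^4}^2\|\nabla u\|_{L^4}^2$, where $\|\nabla d_t\|_{L^2}^2$ is exactly the dissipation furnished by the identity $\int|\nabla d_t-\Delta\nabla d|^2\,dx$ for the director equation. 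You would need to adopt this (or an equivalent) device to close \eqref{eq3.10}. Two further points you gloss over: the term $\||u||\nabla^2d|\|_{L^2}^2$ in the director estimate requires the spatially weighted bound $\|\nabla^2 d\,\bar x^{a/2}\|_{L^2}^2$ from \eqref{eq3.5}, not just $L^4$-interpolation; and on $B_R$ the Neumann condition $\partial d/\partial\nu=0$ generates boundary terms (e.g.\ $\frac1R\|\nabla d\|_{L^2(\partial B_R)}^2$) that must be handled by trace inequalities, as well as the appeal to Lemma \ref{lem2.5} to pass from $\|\Delta\nabla d\|_{L^2}$ to $\|\nabla^3 d\|_{L^2}$.
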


\begin{proof}\label{proof of lem3.3}
First, for $N>1$, let $\varphi_{N}\in C_{0}^{\infty}(B_{N})$ satisfy
\begin{align}\label{eq3.12}
0\leq \varphi_{N}\leq 1, \quad \varphi_{N}(x)=
\begin{cases}
1,\quad \text{ if }|x|\leq \frac{N}{2},\\
0,\quad \text{ if }|x|\geq N,
\end{cases}
 \quad |\nabla^{k}\varphi_{N}|\leq CN^{-k}\quad \text{ for } k\in \mathbb{N}.
\end{align}
It follows from $\eqref{eq1.1}_{1}$ and the energy inequality \eqref{eq3.3} that
\begin{align*}
\frac{d}{dt}\int \varrho \varphi_{2N_{0}}\text{d}x=\int \varrho  u\cdot\nabla \varphi_{2N_{0}}\text{d}x
\geq -CN_{0}^{-1}\left(\int \varrho\text{d}x\right)^{\frac{1}{2}}\left(\int \varrho |u|^{2}\text{d}x\right)^{\frac{1}{2}}
\geq - \widetilde{C}(N_{0},E_{0}).
\end{align*}
Integrating the above inequality and  using \eqref{eq3.1}, it follows that
\begin{align}\label{eq3.13}
\inf_{0\leq t\leq T_{1}} \int_{B_{2N_{0}}} \varrho \text{d}x\geq \inf_{0\leq t\leq T_{1}} \int \varrho \varphi_{2N_{0}} \text{d}x
\geq \int \varrho_{0}\varphi_{2N_{0}}\text{d}x-\widetilde{C} T_{1}\geq \frac{1}{4},
\end{align}
where $T_{1}\triangleq \min \{1, \frac{1}{4\widetilde{C}}\}$. From now on, we will always assume that $t\leq T_{1}$ in this subsection. The combination of \eqref{eq3.13}, \eqref{eq3.3} and Lemma \ref{lem2.3} ensures that for $\varepsilon>0$, $\eta>0$, and every $v\in \widetilde{D}^{1,2}(B_{R})$ satisfies
\begin{align}\label{eq3.14}
\|v \bar{x}^{-\eta}\|_{L^{\frac{2+\varepsilon}{\widetilde{\eta}}}}\leq C(\varepsilon,\eta) \|\varrho^{\frac{1}{2}} v\|_{L^{2}} +
C(\varepsilon,\eta) \|\nabla v\|_{L^{2}},
\end{align}
where $\widetilde{\eta}\triangleq \min\{1,\eta\}$. Using the above estimate \eqref{eq3.14}, after multiplying $\eqref{eq1.1}_{1}$ by $\bar{x}^{a}$ and integrating by parts over $B_{R}$, we have
\begin{align*}
\frac{d}{dt} \|\varrho \bar{x}^{a}\|_{L^{1}}\leq& C\int \varrho |u| \bar{x}^{a-1} \ln^{2}(e+|x|^{2})\text{d}x
\leq C\|\varrho\bar{x}^{a-1+\frac{4}{4+a}}\|_{L^{\frac{a+4}{a+3}}} \|u\bar{x}^{-\frac{4}{4+a}}\|_{L^{a+4}}\nonumber\\
\leq & C\|\varrho\|_{L^{\infty}}^{\frac{1}{4+a}} \|\varrho\bar{x}^{a}\|_{L^{1}}^{\frac{3+a}{4+a}}
(\|\varrho^{\frac{1}{2}} u\|_{L^{2}} +\|\nabla u\|_{L^{2}})\nonumber\\
\leq& C(1+\|\varrho\bar{x}^{a}\|_{L^{1}}) (1+\|\nabla u\|_{L^{2}}^{2})
\end{align*}
due to \eqref{eq3.3} and \eqref{eq3.6}. This combined with Gronwall's inequality and \eqref{eq3.3} leads to
\begin{align}\label{eq3.15}
\sup_{0\leq \tau\leq t} \|\varrho\bar{x}^{a}\|_{L^{1}} \leq C\exp\left\{C\int_{0}^{t}(1+\|\nabla u\|_{L^{2}}^{2})\text{d}\tau\right\}\leq C.
\end{align}

 Now, multiplying $\eqref{eq1.1}_{2}$ by $u_{t}$ and integrating by parts over $B_{R}$, one has
 \begin{align}\label{eq3.16}
 \frac{1}{2}\frac{d}{dt} \|\nabla u\|_{L^{2}}^{2}+\|\varrho^{\frac{1}{2}} u_{t}\|_{L^{2}}^{2}
 \leq C\int\varrho |u||\nabla u||u_{t}|\text{d}x+\int (\nabla d\odot \nabla d) \cdot \nabla u_{t}\text{d}x.
 \end{align}
 By using  H\"{o}lder's and Gagliardo-Nirenberg inequalities, and \eqref{eq2.4} ensure that
 \begin{align}\label{eq3.17}
 \int\varrho |u||\nabla u||u_{t}|\text{d}x\leq& \frac{1}{2}\|\varrho^{\frac{1}{2}} u_{t}\|_{L^{2}}^{2}
 +C\int\varrho |u|^{2} |\nabla u|^{2}\text{d}x
 \leq \frac{1}{2}\|\varrho^{\frac{1}{2}} u_{t}\|_{L^{2}}^{2}
 +C\|\varrho^{\frac{1}{2}} u\|_{L^{8}}^{2} \|\nabla u\|_{L^{\frac{8}{3}}}^{2}\nonumber\\
 \leq &  \frac{1}{2}\|\varrho^{\frac{1}{2}} u_{t}\|_{L^{2}}^{2}
 +C\|\varrho^{\frac{1}{2}} u\|_{L^{8}}^{2} \|\nabla u\|_{L^{2}}^{\frac{3}{2}}\|\nabla u\|_{H^{1}}^{\frac{1}{2}}\nonumber\\
 \leq &  \frac{1}{2}\|\varrho^{\frac{1}{2}} u_{t}\|_{L^{2}}^{2}+\varepsilon \|\nabla^{2} u\|_{L^{2}}^{2}
 +C(\|\varrho^{\frac{1}{2}} u\|_{L^{2}}^{2}+\|\nabla u\|_{L{2}}^{2})^{\frac{4}{3}} \|\nabla u\|_{L^{2}}^{2}\nonumber\\
 \leq &  \frac{1}{2}\|\varrho^{\frac{1}{2}} u_{t}\|_{L^{2}}^{2}+\varepsilon \|\nabla^{2} u\|_{L^{2}}^{2}
 +C\Phi^{\beta}(t),
 \end{align}
where (and in what follows) we have used $\beta>1$ to denote a generic constant, which may be different from line to line.
For the second term on the right-hand  side of \eqref{eq3.16}, we have
\begin{align}\label{eq3.18}
\int (\nabla d\odot \nabla d) \cdot \nabla u_{t}\text{d}x=&\frac{d}{dt}\int (\nabla d\odot \nabla d) \cdot \nabla u\text{d}x
-\int \nabla d_{t}\odot \nabla d\cdot \nabla u\text{d}x- \int \nabla d\odot \nabla d_{t}\cdot \nabla u\text{d}x\nonumber\\
\leq&\frac{d}{dt}\int (\nabla d\odot \nabla d) \cdot \nabla u\text{d}x
+\frac{1}{4} \|\nabla d_{t}\|_{L^{2}}+C\| \nabla d\|_{L^{4}}^{2}\| \nabla u\|_{L^{4}}^{2}\nonumber\\
\leq&\frac{d}{dt}\int (\nabla d\odot \nabla d) \cdot \nabla u\text{d}x
+\frac{1}{4} \|\nabla d_{t}\|_{L^{2}}^{2}+C\| \nabla d\|_{L^{2}}\|\nabla d\|_{H^{1}}\| \nabla u\|_{L^{2}}\|\nabla u\|_{H^{1}}\nonumber\\
\leq&\frac{d}{dt}\int (\nabla d\odot \nabla d) \cdot \nabla u\text{d}x
+\frac{1}{4} \|\nabla d_{t}\|_{L^{2}}^{2}+\varepsilon \|\nabla^{2} u\|_{L^{2}}^{2}+C\Phi^{\beta}(t).
\end{align}
Inserting \eqref{eq3.17} and \eqref{eq3.18} into \eqref{eq3.16}, it follows that
\begin{align}\label{eq3.19}
\frac{d}{dt} B(t) +\|\varrho^{\frac{1}{2}} u_{t}\|_{L^{2}}^{2}\leq \varepsilon \|\nabla^{2} u\|_{L^{2}}^{2}+\frac{1}{4} \|\nabla d_{t}\|_{L^{2}}^{2}
+C\Phi^{\beta}(t),
\end{align}
where
\begin{align*}
B(t)\triangleq \|\nabla u\|_{L^{2}}^{2}-\int (\nabla d\odot \nabla d) \cdot \nabla u\text{d}x
\end{align*}
satisfies
\begin{align*}
\frac{1}{2}\|\nabla u\|_{L^{2}}^{2}-C_{1}\|\nabla d\|_{H^{1}}^{2}\leq B(t)\leq \frac{3}{2}\|\nabla u\|_{L^{2}}^{2}+C\|\nabla d\|_{H^{1}}^{2}.
\end{align*}

Next, from \eqref{eq3.8}, it is easy to obtain that
\begin{align}\label{eq3.20}
\frac{d}{dt}(\|\nabla^{2}\! d\|_{L^{2}}^{2} &\!+\!\frac{1}{R}\!\|\nabla d\|_{L^{\!2}(\partial B_{R})}^{2})\!+\!\|\nabla d_{t}\|_{\!L^{\!2}}^{2}\!+\!\|\Delta \nabla d\|_{\!L^{\!2}}^{2}\!=\!\!\int\! |\nabla d_{t}\!-\!\Delta\nabla d|^{2}\text{d}x\!\leq \!C\!\!\int\! |\nabla(u\cdot \nabla d)|^{2}\!+\!|\nabla(|\nabla d|^{2}d)|^{2}\text{d}x\nonumber\\
\leq& C\|\nabla u\|_{L^{2}}^{2} \|\nabla d\|_{L^{\infty}}^{2} +C\||u||\nabla^{2} d|\|_{L^{2}}^{2} +C \||\nabla d||\nabla^{2}d|\|_{L^{2}}^{2}
+C\|\nabla d\|_{L^{6}}^{6}\nonumber\\
\leq & C\!\left(\|\nabla u\|_{\!L^{\!2}}^{2} \|\nabla d\|_{\!L^{\!2}}(\|\nabla d\|_{\!L^{\!2}}\!+\!\|\nabla^{3} d\|_{\!L^{\!2}})\!+\!\|u\bar{x}^{-\frac{a}{4}}\|_{\!L^{\!8}}^{2} \|\nabla^{2}d\bar{x}^{\frac{a}{2}}\|_{\!L^{\!2}}\|\nabla^{2} d\|_{\!L^{\!4}}
\!+\!\|\nabla d\|_{\!L^{\!4}}^{2} \|\nabla^{2} d\|_{\!L^{\!4}}^{2}\! +\!\|\nabla d\|_{\!L^{\!6}}^{6}\right)\nonumber\\
\leq &C(\|\nabla u\|_{L^{2}}^{2} \|\nabla d\|_{L^{2}}(\|\nabla d\|_{L^{2}}+\|\nabla^{3}  d\|_{L^{2}})+\|u\bar{x}^{-\frac{a}{4}}\|_{L^{8}}^{2} \|\nabla^{2}d\bar{x}^{\frac{a}{2}}\|_{L^{2}}(\|\nabla^{2} d\|_{L^{2}}+\|\nabla^{3}  d\|_{L^{2}})\nonumber\\
&
+\|\nabla d\|_{L^{2}} \|\nabla^{2} d\|_{L^{2}}^{2}(\|\nabla^{2} d\|_{L^{2}}+\|\nabla^{3}  d\|_{L^{2}}) +\|\nabla d\|_{L^{2}}^{2}\|\nabla^{2}d\|_{L^{2}}^{4})\nonumber\\
\leq &\frac{1}{4C_{0}} \|\nabla^{3}  d\|_{L^{2}}^{2} +C \|\nabla^{2} d\bar{x}^{\frac{a}{2}}\|_{L^{2}}^{2}+C\Phi^{\beta}(t),
\end{align}
where we have used  H\"{o}lder's and  Gagliardo-Nirenberg inequalities, \eqref{eq3.14},  and the following equalities
\begin{align*}
-2\int \nabla d_{t}\Delta\nabla d\text{d}x=&-2\sum_{i,j=1}^{2} \int_{\partial B_{R}}\partial_{i}d_{t}\cdot \partial_{i}\partial_{j}d\nu_{j}\text{d}S
+\frac{d}{dt} \|\nabla^{2}d\|_{L^{2}}^{2}\nonumber\\
=&2\sum_{i,j=1}^{2} \int_{\partial B_{R}}\partial_{i}d_{t}\cdot\partial_{j}d \partial_{i}\nu_{j}\text{d}S
+\frac{d}{dt} \|\nabla^{2}d\|_{L^{2}}^{2}\nonumber\\
=&\frac{2}{R}\sum_{i,j=1}^{2} \int_{\partial B_{R}}\partial_{i}d_{t}\cdot\partial_{j}d \delta_{ij}\text{d}S
+\frac{d}{dt} \|\nabla^{2}d\|_{L^{2}}^{2}\nonumber\\
=&\frac{d}{dt}( \|\nabla^{2}d\|_{L^{2}}^{2}+\frac{1}{R}\|\nabla d\|_{L^{2}(\partial B_{R})}^{2}),
\end{align*}
due to $\frac{\partial}{\partial\nu}d=0$.
Here, $C_{0}$ is the constant defined in Lemma \ref{lem2.5}. On the other hand, by letting $v=\nabla d$, then it is easy to see that the condition \eqref{eq2.5} holds. Hence, Lemma \ref{lem2.5} ensures that
\begin{align*}
\|\nabla^{3} d\|_{L^{2}}\leq C_{0} \|\Delta\nabla d\|_{L^{2}}.
\end{align*}
Inserting the above inequality into \eqref{eq3.20}, and then adding the resulting inequality to \eqref{eq3.7}, it follows that
\begin{align*}
\frac{d}{dt}(\|\nabla d\|_{H^{1}}^{2} +\frac{1}{R}\!\|\nabla d\|_{L^{\!2}(\partial B_{R})}^{2})
+\!\|\nabla d_{t}\|_{\!L^{\!2}}^{2}\!+\!\frac{3}{4C_{0}} \|\nabla^{3}  d\|_{L^{2}}^{2}
\leq & C \|\nabla^{2} d\bar{x}^{\frac{a}{2}}\|_{L^{2}}^{2}+C\Phi^{\beta}(t).
\end{align*}
Multiplying the above inequality by $C_{1}+1$, and then adding the resulting inequality to \eqref{eq3.19}, it holds that
\begin{align}\label{eq3.21}
\frac{d}{dt}(B(t)\!+&(C_{1}\!+\!1)(\|\nabla d \|_{H^{1}}^{2}+\frac{1}{R}\!\|\nabla d\|_{L^{\!2}(\partial B_{R})}^{2}))+\!\|\varrho^{\frac{1}{2}} u_{t}\|_{L^{2}}^{2}\!+\!\|\nabla d_{t}\|_{L^{2}}^{2}\!
+\frac{3}{4C_{0}}\|\nabla^{3} d\|_{L^{2}}^{2}\nonumber\\
\leq& \varepsilon \|\nabla^{2} u\|_{L^{2}}^{2}\!+C \|\nabla^{2} d\bar{x}^{\frac{a}{2}}\|_{L^{2}}^{2} \!+C\Phi^{\beta}(t).
\end{align}

On the other hand, notice that $(\varrho, u, P, d)$ satisfies the following Stokes system
\begin{align*}
\begin{cases}
-\Delta u +\nabla P=-\varrho u_{t}-\varrho u\cdot\nabla u +\operatorname{div}(\nabla d\odot\nabla d), &x\in B_{R},\\
\quad\operatorname{div} u=0,& x\in B_{R},\\
\quad u(x)=0, & x\in\partial B_{R}.
\end{cases}
\end{align*}
Applying the standard $L^{p}$-estimate to the above system (see, e.g., \cite{Temam}) yields that for any $p\in (1,\infty)$,
\begin{align}\label{eq3.22}
\|\nabla^{2} u\|_{L^{p}}+\|\nabla P\|_{L^{p}}\leq C\|\varrho u_{t}\|_{L^{p}}+C\|\varrho u\cdot\nabla u\|_{L^{p}} +C\|\operatorname{div}(\nabla d\odot\nabla d)\|_{L^{p}},
\end{align}
from which, after using \eqref{eq2.4}, \eqref{eq3.3}, \eqref{eq3.6}, and Gagliardo-Nirenberg inequality, we have
\begin{align}\label{eq3.23}
\|\nabla^{2} u\|_{L^{2}}^{2}+\|\nabla P\|_{L^{2}}^{2}\leq &C\|\varrho u_{t}\|_{L^{2}}^{2}+C\|\varrho u\cdot\nabla u\|_{L^{2}}^{2} +C\|\operatorname{div}(\nabla d\odot\nabla d)\|_{L^{2}}^{2}\nonumber\\
\leq& C\|\varrho\|_{L^{\infty}}\|\varrho^{\frac{1}{2}} u_{t}\|_{L^{2}}^{2} +C\|\varrho u\|_{L^{4}}^{2} \|\nabla u\|_{L^{4}}^{2}
+C\|\nabla d\|_{L^{4}}^{2} \|\nabla^{2}d\|_{L^{4}}^{2}\nonumber\\
\leq& C\|\varrho^{\frac{1}{2}} u_{t}\|_{L^{2}}^{2}\!+\!C( \|\varrho^{\frac{1}{2}}u\|_{L^{2}}^{2}\!+\!\|\nabla u\|_{L^{2}}^{2} )\|\nabla u\|_{L^{2}} \|\nabla u\|_{H^{1}}
\!+\!C\|\nabla d\|_{L^{2}} \|\nabla^{2}d\|_{L^{2}}^{2}\|\nabla^{2} d\|_{H^{1}}\nonumber\\
\leq& \frac{1}{4C_{0}} \|\nabla^{3} d\|_{L^{2}}^{2} +\frac{1}{2}\|\nabla^{2} u\|_{L^{2}}^{2}+C\|\varrho^{\frac{1}{2}}u_{t}\|_{L^{2}}^{2}+C(1+\|\nabla u\|_{L^{2}}^{6} +\|\nabla^{2}d\|_{L^{2}}^{4})\nonumber\\
\leq&\frac{1}{4C_{0}} \|\nabla^{3} d\|_{L^{2}}^{2} +\frac{1}{2}\|\nabla^{2} u\|_{L^{2}}^{2}+C\|\varrho^{\frac{1}{2}}u_{t}\|_{L^{2}}^{2} +C\Phi^{\beta}(t).
\end{align}
Substituting \eqref{eq3.23} into \eqref{eq3.21}, and choosing $\varepsilon$ suitably small, we get
\begin{align*}
\frac{d}{dt}(B(t)+(C_{1}+&1)(\|\nabla d \|_{H^{1}}^{2}+\frac{1}{R}\!\|\nabla d\|_{L^{\!2}(\partial B_{R})}^{2}))+\frac{1}{2}\|\varrho^{\frac{1}{2}} u_{t}\|_{L^{2}}^{2}\!+\!\|\nabla d_{t}\|_{L^{2}}^{2}\!
+\frac{1}{2C_{0}}\|\nabla^{3} d\|_{L^{2}}^{2}\nonumber\\
\leq & C \|\nabla^{2} d\bar{x}^{\frac{a}{2}}\|_{L^{2}}^{2} \!+C\Phi^{\beta}(t).
\end{align*}
Integrating the above inequality with respect to time variable over $(0,t)$, it follows from the definition of $B(t)$, \eqref{eq3.5}  and Lemma \ref{lem2.5} that \eqref{eq3.10} holds.

By using \eqref{eq3.23} and \eqref{eq3.10}, it is easy to see that \eqref{eq3.11} holds. This completes the proof of Lemma \ref{lem3.3}.
\end{proof}

\begin{lemma}\label{lem3.5}
Let $(\varrho, u, P, d)$ and $T_{1}$ be as in Lemma \ref{lem3.3}. Then there exists a positive constant $\beta>1$ such that for all $t\in (0,T_{1}]$,
\begin{align}\label{eq3.24}
\sup_{0\leq \tau \leq t}\! \left(\tau \|\varrho^{\frac{1}{2}} u_{\tau}\|_{L^{2}}^{2}\!+\!\tau\| d_{t}\|_{H^{1}}^{2}\right)\!
+\!\int_{0}^{t}\!\! (\tau\|\nabla u_{\tau}\|_{L^{2}}^{2}\!+
\!\tau \|\nabla d_{\tau}\|_{H^{1}}^{2})\text{d}\tau\leq \!C\exp\left\{\!C\exp\left\{C\!\!\int_{0}^{t} \Phi^{\beta}(\tau)\text{d}\tau\right\}\right\}.
\end{align}
\end{lemma}

\begin{proof}\label{proof of lem3.5}
Differentiating $\eqref{eq1.1}_{2}$ with respect to time variable $t$ gives
\begin{align}\label{eq3.25}
\varrho u_{tt} +\varrho u\cdot\nabla u_{t}-\Delta u_{t}= -\varrho_{t}(u_{t}+u\cdot\nabla u)-\varrho u_{t}\cdot\nabla u -\nabla P_{t} -\operatorname{div}(\nabla d\odot \nabla d)_{t}.
\end{align}
Multiplying \eqref{eq3.25} by $u_{t}$ and integrating the resulting equality by parts over $B_{R}$, we obtain after applying $\eqref{eq1.1}_{1}$ and the divergence free condition $\eqref{eq1.1}_{4}$ that
\begin{align}\label{eq3.26}
\frac{1}{2}\frac{d}{dt} \|\varrho^{\frac{1}{2}} u_{t}\|_{L^{2}}^{2}&+\|\nabla u_{t}\|_{L^{2}}^{2}\leq C\int\! \varrho |u| |u_{t}|\left( |\nabla u_{t}|\!+ \!|\nabla u |^{2}\! +\!|u||\nabla^{2} u|\right)\text{d}x\nonumber\\
& +C\int \varrho |u|^{2}|\nabla u| |\nabla u_{t}|\text{d}x+ C\int \varrho |u_{t}|^{2}|\nabla u|\text{d}x +C\int|\nabla d||\nabla d_{t}|
|\nabla u_{t}|\text{d}x\nonumber\\
\triangleq& J_{1}+J_{2}+J_{3}+J_{4}.
\end{align}
By using \eqref{eq2.4}, \eqref{eq3.14}, H\"{o}lder's and Gagliardo-Nirenberg inequalities, it follows that
\begin{align*}
J_{1}\leq &C\|\varrho^{\frac{1}{2}} u\|_{L^{6}} \|\varrho^{\frac{1}{2}} u_{t}\|_{L^{2}}^{\frac{1}{2}}
\|\varrho^{\frac{1}{2}} u_{t}\|_{L^{6}}^{\frac{1}{2}} (\|\nabla u_{t}\|_{L^{2}}+\|\nabla u\|_{L^{4}}^{2})
+C\|\varrho^{\frac{1}{4}} u\|_{L^{12}}^{2} \|\varrho^{\frac{1}{2}} u_{t}\|_{L^{2}}^{\frac{1}{2}}
\|\varrho^{\frac{1}{2}} u_{t}\|_{L^{6}}^{\frac{1}{2}} \|\nabla^{2} u\|_{L^{2}}\nonumber\\
\leq& C(1\!+\!\|\nabla u\|_{L^{2}}^{2}) \|\varrho^{\frac{1}{2}} u_{t}\|_{L^{2}}^{\frac{1}{2}}
(\|\varrho^{\frac{1}{2}} u_{t}\|_{L^{2}}\! +\!\|\nabla u_{t}\|_{L^{2}})^{\frac{1}{2}} (\|\nabla u_{t}\|_{L^{2}}
\!+\!\|\nabla u\|_{L^{2}}^{2}\!+\!\|\nabla u\|_{L^{2}} \|\nabla^{2} u\|_{L^{2}}\!+\!\|\nabla^{2}u\|_{L^{2}})\nonumber\\
\leq& \frac{1}{6} \|\nabla u_{t}\|_{L^{2}}^{2}+ C\Phi^{\beta}(t)(1+\|\varrho^{\frac{1}{2}} u_{t}\|_{L^{2}}^{2}) +C
(1\!+\!\|\nabla u\|_{L^{2}}^{2})\|\nabla^{2} u\|_{L^{2}}^{2};\\
J_{2}+&J_{3}\leq C \|\varrho^{\frac{1}{2}} u\|_{L^{8}}^{2} \|\nabla u\|_{L^{4}}\|\nabla u_{t}\|_{L^{4}}+C\|\varrho^{\frac{1}{2}} u_{t}\|_{L^{6}}^{\frac{3}{2}} \|\varrho^{\frac{1}{2}} u_{t}\|_{L^{2}}^{\frac{1}{2}}\|\nabla u\|_{L^{2}}\nonumber\\
\leq& \frac{1}{6} \|\nabla u_{t}\|_{L^{2}}^{2}+ C\Phi^{\beta}(t)(1+\|\varrho^{\frac{1}{2}} u_{t}\|_{L^{2}}^{2}) +C
(1\!+\!\|\nabla u\|_{L^{2}}^{2})\|\nabla^{2} u\|_{L^{2}}^{2};\\
J_{4}\leq & C\|\nabla d\|_{L^{4}} \|\nabla d_{t}\|_{L^{4}} \|\nabla u_{t}\|_{L^{2}}
\leq  \frac{1}{6} \|\nabla u_{t}\|_{L^{2}}^{2} +C\|\nabla d\|_{L^{2}} \|\nabla d\|_{H^{1}} \|\nabla d_{t}\|_{L^{2}}\|\nabla d_{t}\|_{H^{1}}
\nonumber\\
\leq &\frac{1}{6} \|\nabla u_{t}\|_{L^{2}}^{2} +\frac{1}{4C_{2}+1}\|\nabla^{2} d_{t}\|_{L^{2}}^{2}+C\Phi^{\beta}(t)\|\nabla d_{t}\|_{L^{2}}^{2},
\end{align*}
where the positive constant $C_{2}$ is defined in the following \eqref{eq3.28} and \eqref{eq3.31}. Substituting the estimates of $J_{i} (i=1,2,\cdots, 4)$ into \eqref{eq3.26}, and then using \eqref{eq3.23}, we obtain
\begin{align}\label{eq3.27}
&\frac{d}{dt} \|\varrho^{\frac{1}{2}} u_{t}\|_{L^{2}}^{2}+\|\nabla u_{t}\|_{L^{2}}^{2}\nonumber\\
\leq& C\Phi^{\beta}(t)(1+\|\varrho^{\frac{1}{2}} u_{t}\|_{L^{2}}^{2} +\|\nabla d_{t}\|_{L^{2}}^{2})+ \frac{1}{4C_{2}+1}\|\nabla^{2} d_{t}\|_{L^{2}}^{2}+C
(1\!+\!\|\nabla u\|_{L^{2}}^{2})\| \nabla^{3} d\|_{L^{2}}^{2}.
\end{align}

Now, differentiating $\eqref{eq1.1}_{3}$ with respect to time variable $t$, and then multiplying the resulting equality by $d_{t}$, we deduce that
\begin{align*}
\frac{1}{2}\frac{d}{dt} \|d_{t}\|_{L^{2}}^{2}+\|\nabla d_{t}\|_{L^{2}}^{2}\leq& C\int |u_{t}||\nabla d||d_{t}|\text{d}x +C\int |\nabla d_{t}| |\nabla d||d_{t}|\text{d}x+C\int |\nabla d|^{2} |d_{t}|^{2}\text{d}x\nonumber\\
\triangleq & J_{5}+J_{6}+J_{7},
\end{align*}
where we have used the condition $\frac{\partial}{\partial\nu} d_{t} =0$ on $\partial B_{R}$.
Using  H\"{o}lder's and Gagliardo-Nirenberg inequalities, and \eqref{eq3.14}, we have
\begin{align*}
J_{5}\leq &\|u_{t}\bar{x}^{-\frac{a}{2}}\|_{L^{4}} \|\nabla d \bar{x}^{\frac{a}{2}}\|_{L^{2}} \|d_{t}\|_{L^{4}}\leq (\|\varrho^{\frac{1}{2}} u_{t}\|_{L^{2}}+\|\nabla u_{t}\|_{L^{2}})\|\nabla d \bar{x}^{\frac{a}{2}}\|_{L^{2}} \|d_{t}\|_{L^{2}}^{\frac{1}{2}}\| d_{t}\|_{H^{1}}^{\frac{1}{2}}\nonumber\\
\leq& \frac{1}{4}\|\nabla u_{t}\|_{L^{2}}^{2}+C\Phi^{\beta}(t)(\|\varrho^{\frac{1}{2}} u_{t}\|_{L^{2}}^{2}+ \|d_{t}\|_{L^{2}}^{2}+\|\nabla d_{t}\|_{L^{2}}^{2}),\nonumber\\
J_{6}+&J_{7}\leq \frac{1}{2}\|\nabla d_{t}\|_{L^{2}}+C\|\nabla d\|_{L^{4}}^{2}\|d_{t}\|_{L^{4}}^{2}
\leq \frac{1}{2}\|\nabla d_{t}\|_{L^{2}}+C\|\nabla d\|_{L^{2}}\|\nabla^{2} d\|_{L^{2}}\|d_{t}\|_{L^{2}}\| d_{t}\|_{H^{1}}\nonumber\\
\leq& \frac{1}{2}\|\nabla d_{t}\|_{L^{2}}+C\Phi^{\beta}(t)\|d_{t}\|_{L^{2}}^{2}.
\end{align*}
Hence
\begin{align}\label{eq3.28}
\frac{d}{dt} \|d_{t}\|_{L^{2}}^{2}+\|\nabla d_{t}\|_{L^{2}}^{2}\leq& C_{2}\|\nabla u_{t}\|_{L^{2}}^{2}+C\Phi^{\beta}(t)(\|\varrho^{\frac{1}{2}} u_{t}\|_{L^{2}}^{2}+ \|d_{t}\|_{L^{2}}^{2}+\|\nabla d_{t}\|_{L^{2}}^{2}).
\end{align}

Differentiating \eqref{eq3.8} with respect to time variable $t$ yields
\begin{align}\label{eq3.29}
\nabla d_{tt}-\Delta\nabla d_{t} =-\nabla (u\cdot \nabla d)_{t}+\nabla (|\nabla d|^{2}d)_{t}.
\end{align}
Multiplying \eqref{eq3.29} by $\nabla d_{t}$, and integrating the resulting equality over $B_{R}$, it follows that
\begin{align}\label{eq3.30}
\frac{1}{2}\frac{d}{dt}\|\nabla d_{t}\|_{L^{2}}^{2}&+\|\nabla^{2} d_{t}\|_{L^{2}}^{2}+\frac{1}{R} \|\nabla d_{t}\|_{L^{2}(\partial B_{R})}
\leq  C\int |\nabla u_{t}||\nabla d||\nabla d_{t}|\text{d}x+C\int |\nabla u||\nabla d_{t}|^{2}\text{d}x
\nonumber\\
&+
C\int |u_{t}||\nabla^{2} d| |\nabla d_{t}|\text{d}x
+C\int |\nabla d|^{2}|d_{t}||\nabla^{2} d_{t}|\text{d}x+C\int |\nabla d||\nabla d_{t}| |\nabla^{2} d_{t}|\text{d}x\nonumber\\
&\triangleq J_{8}+J_{9}+J_{10}+J_{11}+J_{12}.
\end{align}
By virtue of H\"{o}lder's and Gagliardo-Nirenberg inequalities, and \eqref{eq3.14}, we have
\begin{align*}
J_{8}\leq& \|\nabla u_{t}\|_{L^{2}}\|\nabla d_{t}\|_{L^{4}}\|\nabla d\|_{L^{4}}\leq \frac{1}{2} \|\nabla u_{t}\|_{L^{2}}^{2}+C
\|\nabla d\|_{L^{2}}\|\nabla d\|_{H^{1}} \|\nabla d_{t}\|_{L^{2}}\|\nabla d_{t}\|_{H^{1}}\nonumber\\
\leq& \frac{1}{2} \|\nabla u_{t}\|_{L^{2}}^{2}+ \frac{1}{4} \|\nabla^{2} d_{t} \|_{L^{2}}^{2}
+C\Phi^{\beta}(t) \|\nabla d_{t}\|_{L^{2}}^{2},\nonumber\\
J_{9}\leq &C\|\nabla u\|_{L^{2}}\|\nabla d_{t}\|_{L^{4}}^{2}
\leq C\|\nabla u\|_{L^{2}}\|\nabla d_{t}\|_{L^{2}} \|\nabla d_{t}\|_{H^{1}}
\leq \frac{1}{8} \|\nabla^{2} d_{t}\|_{L^{2}} +C\Phi^{\beta}(t)\|\nabla d_{t}\|_{L^{2}},\nonumber\\
 J_{10}\leq& C\int |u_{t}\bar{x}^{-\frac{a}{4}}||\nabla^{2}d|^{\frac{1}{2}}\bar{x}^{\frac{a}{4}}|\nabla^{2}d|^{\frac{1}{2}}|\nabla d_{t}|\text{d}x
\leq C\|u_{t}\bar{x}^{-\frac{a}{4}}\|_{L^{4}}\|\nabla^{2}d\bar{x}^{\frac{a}{2}}\|_{L^{2}}^{\frac{1}{2}} \|\nabla^{2}d\|_{L^{2}}^{\frac{1}{2}}
\|\nabla d_{t}\|_{L^{4}}\nonumber\\
\leq&  C\|u_{t}\bar{x}^{-\frac{a}{4}}\|_{L^{4}}\|\nabla^{2}d\bar{x}^{\frac{a}{2}}\|_{L^{2}}^{\frac{1}{2}} \|\nabla^{2}d\|_{L^{2}}^{\frac{1}{2}}
\|\nabla d_{t}\|_{L^{2}}^{\frac{1}{2}}\|\nabla d_{t}\|_{H^{1}}^{\frac{1}{2}}\nonumber\\
\leq& \frac{1}{8} \|\nabla^{2}d_{t}\|_{L^{2}}^{2}+C\|u_{t}\bar{x}^{-\frac{a}{4}}\|_{L^{4}}^{2}+C\|\nabla^{2}d\bar{x}^{\frac{a}{2}}\|_{L^{2}}^{2} \|\nabla^{2}d\|_{L^{2}}^{2}
\|\nabla d_{t}\|_{L^{2}}^{2}\nonumber\\
\leq& \frac{1}{8} \|\nabla^{2}d_{t}\|_{L^{2}}^{2}+C(\|\varrho^{\frac{1}{2}} u_{t}\|_{L^{2}}+\|\nabla u_{t}\|_{L^{2}}^{2})+C\|\nabla^{2}d\bar{x}^{\frac{a}{2}}\|_{L^{2}}^{2} \|\nabla^{2}d\|_{L^{2}}^{2}
\|\nabla d_{t}\|_{L^{2}}^{2},\nonumber\\
J_{11}\leq & \frac{1}{8}\|\nabla^{2} d_{t}\|_{L^{2}}^{2}+C\|\nabla d\|_{L^{8}}^{2}\| d_{t}\|_{L^{4}}^{2}\leq  \frac{1}{8}\|\nabla^{2} d_{t}\|_{L^{2}}^{2}+C\|\nabla d\|_{L^{2}}^{\frac{1}{2}}\|\nabla d\|_{H^{1}}^{\frac{3}{2}}\| d_{t}\|_{L^{2}}\| d_{t}\|_{H^{1}}\nonumber\\
\leq&  \frac{1}{8}\|\nabla^{2} d_{t}\|_{L^{2}}^{2}+C\Phi^{\beta}(t)(\| d_{t}\|_{L^{2}}^{2}+\|\nabla d_{t}\|_{L^{2}}^{2}),\nonumber\\
J_{12}\leq & \frac{1}{16}\|\nabla^{2} d_{t}\|_{L^{2}}^{2}+C\|\nabla d\|_{L^{4}}^{2}\| \nabla d_{t}\|_{L^{4}}^{2}\leq \frac{1}{8}\|\nabla^{2} d_{t}\|_{L^{2}}^{2}+C\Phi^{\beta}(t)\|\nabla d_{t}\|_{L^{2}}^{2}.
\end{align*}
Inserting the estimates of $J_{i} (i=8,9,\cdots, 12)$ into \eqref{eq3.30}, it yields that
\begin{align}\label{eq3.31}
&\frac{d}{dt}\|\nabla d_{t}\|_{L^{2}}^{2}+\|\nabla^{2} d_{t}\|_{L^{2}}^{2}\nonumber\\
\leq& C_{2} (\|\varrho^{\frac{1}{2}} u_{t}\|_{L^{2}}+\|\nabla u_{t}\|_{L^{2}}^{2})+C(\Phi^{\beta}(t)+\|\nabla^{2}d\bar{x}^{\frac{a}{2}}\|_{L^{2}}^{2} \|\nabla^{2}d\|_{L^{2}}^{2})
 (\| d_{t}\|_{L^{2}}^{2}+\|\nabla d_{t}\|_{L^{2}}^{2}).
\end{align}

Finally, multiplying \eqref{eq3.27} by  ${2C_{2}+1}$ and adding the resulting inequality with \eqref{eq3.28} and  \eqref{eq3.31}, we have
\begin{align*}
&\frac{d}{dt}({(2C_{2}+1)}\|\varrho^{\frac{1}{2}} u_{t}\|_{L^{2}}^{2}+\|d_{t}\|_{H^{1}}^{2})+\|\nabla u_{t}\|_{L^{2}}
+\frac{1}{2} \|\nabla d_{t}\|_{H^{1}}^{2} \nonumber\\
\leq& C(\Phi^{\beta}(t)+\|\nabla^{2}d\bar{x}^{\frac{a}{2}}\|_{L^{2}}^{2} \|\nabla^{2}d\|_{L^{2}}^{2})(1+\|\varrho^{\frac{1}{2}} u_{t}\|_{L^{2}}^{2}+\|\nabla d_{t}\|_{H^{1}}^{2})
+C(1+\|\nabla u\|_{L^{2}}^{2})\|\nabla^{3} d\|_{L^{2}}^{2}.
\end{align*}
Multiplying the above inequality by $t$, we obtain \eqref{eq3.24} after using Gronwall's inequality and \eqref{eq3.10}. This completes the proof of Lemma \ref{lem3.5}.
\end{proof}

\begin{lemma}\label{lem3.6}
Let  $(\varrho, u, P, d)$ and $T_{1}$ be as in Lemma \ref{lem3.3}. Then there exists a positive constant $\beta>1$ such that for all $t\in (0,T_{1}]$,
\begin{align}\label{eq3.32}
\sup_{0\leq \tau \leq t} \left(\tau \|\nabla^{2} u\|_{L^{2}}^{2}\!+\tau\|\nabla^{3} d\|_{L^{2}}^{2}\!+\tau \|\nabla P\|_{L^{2}}^{2}\right)
\leq C\exp\left\{\! C\exp\left\{\!C\!\!\int_{0}^{t} \!\Phi^{\beta}(\tau)\text{d}\tau\right\}\!\right\}.
\end{align}
\end{lemma}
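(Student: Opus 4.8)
The three quantities $\|\nabla^2 u\|_{L^2}$, $\|\nabla P\|_{L^2}$ and $\|\nabla^3 d\|_{L^2}$ are all elliptic (no time derivative), so the plan is to reduce each of them, at a fixed time, to the parabolic quantities $\varrho^{1/2}u_t$ and $\nabla d_t$ whose time-weighted $L^\infty_tL^2_x$ norms are already controlled by the double exponential in \eqref{eq3.24}, and then to multiply by $\tau$ and take the supremum over $[0,t]$. For the velocity and pressure I would start from the Stokes estimate \eqref{eq3.23}: after absorbing the terms $\tfrac12\|\nabla^2 u\|_{L^2}^2$ and $\tfrac{1}{4C_0}\|\nabla^3 d\|_{L^2}^2$ it already gives, pointwise in time, a bound of the shape $\|\nabla^2 u\|_{L^2}^2+\|\nabla P\|_{L^2}^2\le C\|\varrho^{1/2}u_t\|_{L^2}^2+C\|\nabla^3 d\|_{L^2}^2+C\Phi^\beta(t)$, so the $u$- and $P$-parts are settled once $\|\nabla^3 d\|_{L^2}^2$ is handled.

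For $\nabla^3 d$ I would invoke the elliptic regularity of Lemma \ref{lem2.5} with $v=\nabla d$, namely $\|\nabla^3 d\|_{L^2}\le C_0\|\Delta\nabla d\|_{L^2}$, together with the identity $\Delta\nabla d=\nabla d_t+\nabla(u\cdot\nabla d)-\nabla(|\nabla d|^2 d)$ coming from \eqref{eq3.8}. The harmless source terms $\nabla u\,\nabla d$, $\nabla d\,\nabla^2 d$ and $|\nabla d|^3$ are treated exactly as in the derivation of \eqref{eq3.20}, i.e. Gagliardo--Nirenberg followed by Young turns them into a small multiple of $\|\nabla^3 d\|_{L^2}^2$ (to be absorbed) plus $C\Phi^\beta(t)$. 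This leaves the bound $\|\nabla^3 d\|_{L^2}^2\le C\|\nabla d_t\|_{L^2}^2+C\|u\cdot\nabla^2 d\|_{L^2}^2+C\Phi^\beta(t)$.

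It then remains to multiply both displayed inequalities by $\tau$, take $\sup_{0\le\tau\le t}$, and insert the bounds from Lemma \ref{lem3.5}: $\sup_\tau\tau\|\varrho^{1/2}u_\tau\|_{L^2}^2$ and $\sup_\tau\tau\|\nabla d_\tau\|_{L^2}^2\le\sup_\tau\tau\|d_\tau\|_{H^1}^2$ are both dominated by $C\exp\{C\exp\{C\int_0^t\Phi^\beta\}\}$ via \eqref{eq3.24}. The factors $\tau\Phi^\beta(\tau)$ are controlled by $T_1\sup_\tau\Phi(\tau)$, and $\sup_\tau\Phi$ is in turn bounded by a single exponential of $\int_0^t\Phi$ through Lemmas \ref{lem3.2}--\ref{lem3.3} (cf. \eqref{eq3.4}, \eqref{eq3.5}, \eqref{eq3.10}); all of these are of the required double-exponential form, so the three desired quantities close.

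The genuine obstacle is the strongly coupled term $\|u\cdot\nabla^2 d\|_{L^2}$ in the $\nabla^3 d$ estimate. Since in the vacuum setting $u$ is only controlled through the density- and spatially-weighted norms of Lemmas \ref{lem2.3}--\ref{lem2.4}, any estimate of the form $\|u\cdot\nabla^2 d\|_{L^2}\le\|u\bar{x}^{-a/4}\|_{L^8}\|\nabla^2 d\,\bar{x}^{a/4}\|_{L^{8/3}}$ forces the spatially weighted norm $\|\nabla^2 d\,\bar{x}^{a/2}\|_{L^2}$, which so far is available only in the \emph{time-integrated} form \eqref{eq3.5}, whereas here a pointwise (times $\tau$) bound is needed. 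I expect to resolve this either by upgrading \eqref{eq3.5} to a time-weighted weighted estimate yielding $\sup_{0\le\tau\le t}\tau\|\nabla^2 d\,\bar{x}^{a/2}\|_{L^2}^2$, or, mirroring the way the same weighted norm is handled in \eqref{eq3.31}, by reorganizing the argument so that $\|\nabla^2 d\,\bar{x}^{a/2}\|_{L^2}^2$ enters only as a Gronwall coefficient whose time integral is finite (being paired with the uniformly bounded $\|\nabla^2 d\|_{L^2}^2$ from \eqref{eq3.10}). The delicate point in carrying this out is that weighted testing of the $d$-equation tends to produce ever higher spatial weights on $\nabla^3 d$, so one must exploit the precise 2D harmonic-map-heat-flow structure to keep the weights from escalating before absorbing the top-order term by Young's inequality.
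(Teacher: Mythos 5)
Your plan is essentially the paper's proof: combine the Stokes estimate \eqref{eq3.23} with an $H^{3}$ elliptic estimate for $\eqref{eq1.1}_{3}$, reduce $\|\nabla^{2}u\|_{L^{2}}^{2}+\|\nabla P\|_{L^{2}}^{2}+\|\nabla^{3}d\|_{L^{2}}^{2}$ to $\|\varrho^{\frac12}u_{t}\|_{L^{2}}^{2}$, $\|\nabla d_{t}\|_{L^{2}}^{2}$, the weighted quantity $\|\nabla^{2}d\,\bar{x}^{\frac{a}{2}}\|_{L^{2}}^{2}$ and powers of $\Phi$, then multiply by $\tau$ and close with \eqref{eq3.24}. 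The one step you leave open is the genuine crux, and you have diagnosed it correctly: the coupled term $\||u||\nabla^{2}d|\|_{L^{2}}^{2}$ forces $\|\nabla^{2}d\,\bar{x}^{\frac{a}{2}}\|_{L^{2}}^{2}$ onto the right-hand side, and the earlier lemmas control this quantity only in the time-integrated form \eqref{eq3.5}. The paper does not actually resolve this point either: it carries $\|\nabla^{2}d\,\bar{x}^{\frac{a}{2}}\|_{L^{2}}^{2}$ through to the final pointwise inequality, multiplies by $t$, and cites \eqref{eq3.4}, \eqref{eq3.10}, \eqref{eq3.22} and \eqref{eq3.24}, none of which bound $\sup_{0\le\tau\le t}\tau\|\nabla^{2}d\,\bar{x}^{\frac{a}{2}}\|_{L^{2}}^{2}$. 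So to make the argument complete you should carry out the first of your two proposed fixes: test \eqref{eq3.8} against $\Delta\nabla d\,\bar{x}^{a}$, multiply the resulting differential inequality by $t$, and use \eqref{eq3.5} together with Gronwall to obtain $\sup_{0\le\tau\le t}\tau\|\nabla^{2}d\,\bar{x}^{\frac{a}{2}}\|_{L^{2}}^{2}\le C\exp\{C\exp\{C\int_{0}^{t}\Phi^{\beta}(\tau)\,\mathrm{d}\tau\}\}$; this is exactly the computation the authors perform later for \eqref{eq4.32} (see \eqref{eq4.33}), where the weight does not escalate because every top-order term $\nabla^{3}d\,\bar{x}^{\frac{a}{2}}$ is absorbed by Young's inequality. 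With that supplementary estimate in hand, your argument closes and coincides with the intended proof; without it, your write-up (like the paper's) has a gap at precisely this point.
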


\begin{proof}\label{proof of lem3.6}
Applying $H^{k} (k=2,3)$ estimates of elliptic equations (see \cite{HWW12}), it is easy to deduce from $\eqref{eq1.1}_{3}$ that
\begin{align*}
\|\nabla^{3}d \|_{L^{2}}^{2}\leq& C\|\nabla d_{t}\|_{L^{2}}^{2}+
C\||u||\nabla^{2}d|\|_{L^{2}}^{2} +C\||\nabla u||\nabla d|\|_{L^{2}}^{2} +C\|\nabla d\|_{L^{6}}^{6}+C\||\nabla d||\nabla^{2}d|\|_{L^{2}}^{2}.
\end{align*}
By virtue of \eqref{eq3.14}, H\"{o}lder's and Gagliardo-Nirenberg inequalities, it follows that
\begin{align*}
\||u||\nabla^{2}d|\|_{L^{2}}^{2}\leq & C\|u\bar{x}^{-\frac{a}{4}}\|_{\!L^{\!8}}^{2}\|\nabla^{2}d\bar{x}^{\frac{a}{2}}\|_{\!L^{\!2}}\|\nabla^{2}d\|_{\!L^{\!4}}
\leq C\|\nabla^{2}d\bar{x}^{\frac{a}{2}}\|_{\!L^{\!2}}^{2}+C\|u\bar{x}^{-\frac{a}{4}}\|_{\!L^{\!8}}^{4}\|\nabla^{2}d\|_{\!L^{\!4}}^{2}\nonumber\\
\leq& C\|\nabla^{2}d\bar{x}^{\frac{a}{2}}\|_{\!L^{\!2}}^{2}+C(\|\varrho^{\frac{1}{2}} u\|_{L^{2}}^{2}+\|\nabla u\|_{L^{2}}^{2} )^{2}\|\nabla^{2}d\|_{L^{2}}\|\nabla^{2}d\|_{H^{1}}\nonumber\\
\leq&\frac{1}{4}\|\nabla^{3}d\|_{L^{2}}^{2}+C\|\nabla^{2}d\bar{x}^{\frac{a}{2}}\|_{\!L^{\!2}}^{2}+C(1+\|\nabla u\|_{L^{2}}^{8} )\|\nabla^{2}d\|_{L^{2}}^{2},\nonumber\\
\||\nabla u||\nabla d|\|_{L^{2}}^{2}\leq &C\|\nabla u\|_{L^{4}}^{2}\|\nabla^{2 }d \|_{L^{4}}^{2}\leq \|\nabla u\|_{L^{2}}\|\nabla u\|_{H^{1}}
\|\nabla d\|_{L^{2}}\|\nabla d\|_{H^{1}}\nonumber\\
\leq& \frac{1}{4} \|\nabla^{2} u\|_{L^{2}}^{2}+C\|\nabla u\|_{L^{2}}^{2}(\|\nabla d\|_{L^{2}}^{4}+\|\nabla^{2 } d\|_{L^{2}}^{4}),\nonumber\\
\|\nabla d\|_{L^{6}}^{6}+\||\nabla d||\nabla^{2}d|\|_{L^{2}}^{2}&\leq C\|\nabla d\|_{L^{2}}^{2}\|\nabla d\|_{H^{1}}^{4}
+C\|\nabla d\|_{L^{4}}^{2}\|\nabla^{2} d\|_{L^{4}}^{2}\nonumber\\
\leq& C\|\nabla d\|_{L^{2}}^{2}\|\nabla d\|_{H^{1}}^{4}
+C\|\nabla d\|_{L^{2}}\|\nabla d\|_{H^{1}}\|\nabla^{2} d\|_{L^{2}}\|\nabla^{2} d\|_{H^{1}}\nonumber\\
\leq&\frac{1}{4} \|\nabla^{3}d\|_{L^{2}}^{2}+C(1+\|\nabla d\|_{L^{2}}^{8}+\|\nabla^{2}d\|_{L^{2}}^{8}).
\end{align*}
Hence
\begin{align*}
\|\nabla^{3}d \|_{L^{2}}^{2}\leq& \frac{1}{2}\|\nabla^{3}d\|_{L^{2}}^{2} + \frac{1}{4} \|\nabla^{2} u\|_{L^{2}}^{2}+C\|\nabla d_{t}\|_{L^{2}}^{2}+
C\|\nabla^{2}d\bar{x}^{\frac{a}{2}}\|_{\!L^{\!2}}^{2} +C(1+\|\nabla u\|_{L^{2}}^{12}+\|\nabla d\|_{H^{1}}^{12}),
\end{align*}
which together with \eqref{eq3.23} shows that
\begin{align*}
&\|\nabla^{2} u\|_{L^{2}}^{2}+\|\nabla P\|_{L^{2}}^{2} +\|\nabla^{3} d\|_{L^{2}}^{2}\nonumber\\
\leq& C(\|\varrho^{\frac{1}{2}}u_{t}\|_{L^{2}}^{2}+\|\nabla d_{t}\|_{L^{2}}^{2}+
\|\nabla^{2}d\bar{x}^{\frac{a}{2}}\|_{\!L^{\!2}}^{2})+C(1+\|\nabla u\|_{L^{2}}^{12}+\|\nabla d\|_{H^{1}}^{12}).
\end{align*}
Then, multiplying the above inequality by $t$, one obtains from \eqref{eq3.4}, \eqref{eq3.10}, \eqref{eq3.22} and \eqref{eq3.24} that
\begin{align*}
&\sup_{0\leq \tau \leq t} \left(\tau\|\nabla^{2} u\|_{L^{2}}^{2}+\tau\|\nabla P\|_{L^{2}}^{2} +\tau\|\nabla^{3} d\|_{L^{2}}^{2}\right)\nonumber\\
\leq & C\exp\left\{C\exp\left\{C\int_{0}^{t} \Phi^{\beta}(\tau)\text{d}\tau\right\}\right\}+
C\left(1+\exp\left\{C\int_{0}^{t}\Phi^{\beta}(\tau)\text{d}\tau\right\}\right)^{6}\nonumber\\
\leq&C\exp\left\{C\exp\left\{C\int_{0}^{t} \Phi^{\beta}(\tau)\text{d}\tau\right\}\right\},
\end{align*}
which completes the proof of \eqref{eq3.32}.
\end{proof}

\begin{lemma}\label{lem3.7}
Let  $(\varrho, u, P, d)$ and $T_{1}$ be as in Lemma \ref{lem3.3}. Then there exists a positive constant $\beta>1$ such that for all $t\in [0,T_{1}]$,
\begin{align}\label{eq3.33}
\sup_{0\leq \tau \leq t} \|\varrho \bar{x}^{a}\|_{L^{1}\cap H^{1}\cap W^{1,q}}
\leq \exp\left\{C\exp\left\{ C\exp\left\{C\int_{0}^{t} \Phi^{\beta}(\tau)\text{d}\tau\right\}\right\}\right\}.
\end{align}
\end{lemma}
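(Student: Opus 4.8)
The plan is to view $w\triangleq\varrho\bar{x}^a$ as the solution of a transport equation with a source, and to bound $\nabla w$ in $L^2$ and in $L^q$ by Gronwall, the transport coefficient and the source being controlled by the (double-exponential) quantities already produced in Lemmas \ref{lem3.3}--\ref{lem3.6}. First I would note that the $L^1$ part of \eqref{eq3.33} is already settled: estimate \eqref{eq3.15} gives $\sup_{0\le\tau\le t}\|\varrho\bar{x}^a\|_{L^1}\le C$, so only the gradient bounds remain. Since $\operatorname{div} u=0$ and $\bar{x}$ is time-independent, $\eqref{eq1.1}_1$ yields
\[
w_t+u\cdot\nabla w=\varrho\,u\cdot\nabla\bar{x}^a,
\]
and differentiating gives
\[
\partial_t\nabla w+u\cdot\nabla(\nabla w)=-\nabla u\cdot\nabla w+\nabla(\varrho\,u\cdot\nabla\bar{x}^a).
\]

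Next, for $r\in\{2,q\}$ I would multiply the $\nabla w$-equation by $|\nabla w|^{r-2}\nabla w$ and integrate over $B_R$. Because $\operatorname{div} u=0$ and $u=0$ on $\partial B_R$, the convection term integrates to zero, leaving
\[
\frac{d}{dt}\|\nabla w\|_{L^r}\le C\bigl(1+\|\nabla u\|_{L^\infty}\bigr)\|\nabla w\|_{L^r}+C\|\nabla(\varrho\,u\cdot\nabla\bar{x}^a)\|_{L^r}.
\]
The source expands into terms of the types $(\nabla\varrho)\,u\,\nabla\bar{x}^a$, $\varrho\,(\nabla u)\,\nabla\bar{x}^a$ and $\varrho\,u\,\nabla^2\bar{x}^a$. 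Using the elementary weight bounds $|\nabla\bar{x}^a|\le C\bar{x}^a(e+|x|^2)^{-1/2}$ and $|\nabla^2\bar{x}^a|\le C\bar{x}^a(e+|x|^2)^{-1}$ (so the extra weight is soaked up by a decaying factor), writing $\bar{x}^a\nabla\varrho=\nabla w-\varrho\nabla\bar{x}^a$ to re-express $\nabla\varrho$ in terms of $\nabla w$ and $w$, and then invoking the weighted Hardy inequalities of Lemmas \ref{lem2.3}--\ref{lem2.4} on the velocity together with the $L^1\cap L^\infty$ bound \eqref{eq3.6} on $\varrho$, one bounds the source by $C\Phi^\beta(t)\bigl(1+\|\nabla w\|_{L^r}\bigr)$ (for $r=q$ one also uses the $H^1$-bound on $w$ just obtained). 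Gronwall then closes each estimate; the $H^1$-bound is derived first and fed into the $W^{1,q}$-bound.

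The genuinely new ingredient, and the main obstacle, is producing a time-integrable, double-exponential bound for the transport coefficient $\int_0^t\|\nabla u\|_{L^\infty}\,d\tau$. Here I would use the Sobolev embedding $W^{1,q}(B_R)\hookrightarrow L^\infty$ for $q>2$ to write $\|\nabla u\|_{L^\infty}\le C(\|\nabla u\|_{L^q}+\|\nabla^2 u\|_{L^q})$, control $\|\nabla u\|_{L^q}$ by $\|\nabla u\|_{L^2}+\|\nabla^2 u\|_{L^2}$ through Gagliardo--Nirenberg, and estimate $\|\nabla^2 u\|_{L^q}$ via the stationary Stokes estimate \eqref{eq3.22} with $p=q$. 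The key structural point is that the right-hand side of \eqref{eq3.22}, namely $\|\varrho u_t\|_{L^q}+\|\varrho u\cdot\nabla u\|_{L^q}+\|\operatorname{div}(\nabla d\odot\nabla d)\|_{L^q}$, contains no $\nabla\varrho$, so no circularity with the quantity $\nabla w$ being estimated arises; each factor is bounded by the time-weighted quantities of Lemmas \ref{lem3.5}--\ref{lem3.6} after a further application of Lemma \ref{lem2.4} to pass from $\varrho^{1/2}u_t,\nabla u_t$ to $\|\varrho u_t\|_{L^q}$. The delicate point is the integration near $\tau=0$: these factors carry $\tau^{-1/2}$ singularities, so one must verify, using the interpolation exponent that renders $\|\nabla^2u\|_{L^q}$ integrable in time to a power $>1$, that $\int_0^t\|\nabla u\|_{L^\infty}\,d\tau$ is finite and bounded by a double exponential of $\int_0^t\Phi^\beta(\tau)\,d\tau$. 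Once this is in hand, applying Gronwall to the $L^2$ and $L^q$ inequalities above exponentiates the already double-exponential coefficient one further time, producing exactly the triple-exponential bound \eqref{eq3.33}.
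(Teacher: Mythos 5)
Your proposal follows essentially the same route as the paper's proof: you transport $w=\varrho\bar{x}^{a}$, run Gronwall on $\|\nabla w\|_{L^{r}}$ for $r\in\{2,q\}$ with source terms absorbed via the decay of $\nabla\bar{x}^{a}$ and the weighted velocity bounds, and render the coefficient $\int_{0}^{t}\|\nabla u\|_{L^{\infty}}\,d\tau$ finite through the Stokes estimate \eqref{eq3.22} in $L^{q}$, Lemma \ref{lem2.4}, and the time-weighted bounds of Lemmas \ref{lem3.3} and \ref{lem3.5}, with an interpolation exponent (the paper takes $\lambda=2q^{2}$) chosen so that the resulting $\tau^{-\frac{q+1}{2q}}$-type singularity is integrable --- this is precisely the content of \eqref{eq3.34}--\eqref{eq3.36}. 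The one slip is the order of your two Gronwall arguments: the source requires $\|\varrho\bar{x}^{a}\|_{L^{\infty}}$, which in two dimensions is controlled by the $W^{1,q}$ norm ($q>2$) and not by $H^{1}$, so the $L^{q}$ estimate must be closed first (or both norms kept on the right-hand side simultaneously, as in \eqref{eq3.35}), rather than deriving the $H^{1}$ bound first and feeding it into the $W^{1,q}$ bound.
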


\begin{proof}\label{proof of lem3.7}
Inspired by \cite{LJLZ}, we should first derive the following estimates
\begin{align}\label{eq3.34}
&\int_{0}^{t} \left(\|\nabla^{2}u\|_{L^{q}}^{\frac{q+1}{q}}+\|\nabla P\|_{L^{q}}^{\frac{q+1}{q}}+\tau \|\nabla^{2} u\|_{L^{q}}^{2}+\tau\|\nabla P\|_{L^{q}}^{2}\right)\text{d}\tau\leq  C\exp\left\{C\exp\left\{C\int_{0}^{t} \Phi^{\beta}(\tau)\text{d}\tau\right\}\right\}.
\end{align}
In fact, from \eqref{eq3.22}, H\"{o}lder's and Gagliadro-Nirenberg inequalities, we have for $p=q$,
\begin{align*}
\|\nabla^{2} u\|_{L^{q}}+\|\nabla P\|_{L^{q}}\leq& C\|\varrho u_{t}\|_{L^{q}}+C\|\varrho u\cdot\nabla u\|_{L^{q}} +C\|\operatorname{div}(\nabla d\odot\nabla d)\|_{L^{q}}\nonumber\\
\leq& C \|\varrho u_{t}\|_{L^{q}}+C\|\varrho u\|_{L^{2q}}\|\nabla u\|_{L^{2q}} +
C\|\nabla d\|_{L^{2q}} \|\nabla^{2}d\|_{L^{2q}}\nonumber\\
\leq&\|\varrho u_{t}\|_{L^{2}}^{\frac{2(\lambda-q)}{q(\lambda-2)}} \|\varrho u_{t}\|_{L^{\lambda}}^{\frac{\lambda(q-2)}{q(\lambda-2)}}
+C\Phi^{\beta}(t) (1+ \|\nabla^{2} u\|_{L^{2}}^{1-\frac{1}{q}} +\|\nabla^{3} d\|_{L^{2}}^{1-\frac{1}{q}} )\nonumber\\
\leq& C(\|\varrho^{\frac{1}{2}} u_{t}\|_{L^{2}}^{\frac{2(\lambda-q)}{q(\lambda-2)}}\|\nabla u_{t}\|_{L^{2}}^{\frac{\lambda(q-2)}{q(\lambda-2)}}
\!\!+\|\varrho^{\frac{1}{2}} u_{t}\|_{L^{2}})
+\!C\Phi^{\beta}(t) (1+\! \|\nabla^{2} u\|_{L^{2}}^{1-\frac{1}{q}} \!+\!\|\nabla^{3} d\|_{L^{2}}^{1-\frac{1}{q}} ),
\end{align*}
where $\lambda>q$ will be chosen later. The above inequality together with \eqref{eq3.10}, \eqref{eq3.11} and \eqref{eq3.24} ensures that
\begin{align*}
&\int_{0}^{t} \left(\|\nabla^{2}u\|_{L^{q}}^{\frac{q+1}{q}}+\|\nabla P\|_{L^{q}}^{\frac{q+1}{q}}\right)\text{d}\tau\nonumber\\
\leq& C\int_{0}^{t}\tau^{-\frac{q+1}{2q}}\left(\tau \|\varrho^{\frac{1}{2}} u_{t}\|_{L^{2}}^{2}\right)^{\frac{(\lambda-q)(q+1)}{q^{2}(\lambda-2)}}
\left(\tau\|\nabla u_{\tau}\|_{L^{2}}^{2}\right)^{\frac{\lambda(q-2)(q+1)}{2q^{2}(\lambda-2)}}\text{d}\tau
+C\int_{0}^{t} \|\varrho^{\frac{1}{2}} u_{\tau}\|_{L^{2}}^{\frac{q+1}{q}}\text{d}\tau\nonumber\\
&+C\int_{0}^{t} \Phi^{\beta}(t)(1+\|\nabla^{2} u\|_{L^{2}}^{\frac{q^{2}-1}{q^{2}}} +\|\nabla^{3} d\|_{L^{q}}^{\frac{q^{2}-1}{q^{2}}})\text{d}\tau\nonumber\\
\leq&C \sup_{0\leq \tau\leq t}\left\{\tau \|\varrho^{\frac{1}{2}} u_{\tau}\|_{L^{2}}^{2}\right\}^{\frac{(\lambda-q)(q+1)}{q^{2}(\lambda-2)}}
\int_{0}^{t}\tau^{-\frac{q+1}{2q}}\left(\tau\|\nabla u_{\tau}\|_{L^{2}}^{2}\right)^{\frac{\lambda(q-2)(q+1)}{2q^{2}(\lambda-2)}}\text{d}\tau\nonumber\\
&+C\int_{0}^{t}(\Phi^{\beta}(t)+\|\varrho^{\frac{1}{2}} u_{\tau}\|_{L^{2}}^{2}+\|\nabla^{2} u\|_{L^{2}}^{2} +\|\nabla^{3} d\|_{L^{q}}^{2})\text{d}\tau\nonumber\\
\leq&C\exp\left\{C\exp\left\{C\int_{0}^{t} \Phi^{\beta}(\tau)\text{d}\tau\right\}\right\}\left(1+\int_{0}^{t}\left( \tau^{-\frac{q(q+1)(\lambda-2)}{2q(\lambda-2)-\lambda(q-2)(q+1)}}+\tau\|\nabla u_{\tau}\|_{L^{2}}^{2}\right)\text{d}\tau\right).
\end{align*}
By selecting $\lambda=2q^{2}$,  we have $\frac{q(q+1)(\lambda-2)}{2q(\lambda-2)-\lambda(q-2)(q+1)}=\frac{q^{3}+q^{2}-q-1}{q^{3}+q^{2}}$, and  then
\begin{align*}
\|\nabla^{2} u\|_{L^{q}}+\|\nabla P\|_{L^{q}}\leq& C\exp\left\{C\exp\left\{C\int_{0}^{t} \Phi^{\beta}(\tau)\text{d}\tau\right\}\right\}\left(1+\int_{0}^{t} \left(\tau^{-\frac{q^{3}+q^{2}-q-1}{q^{3}+q^{2}}}+\tau\|\nabla u_{\tau}\|_{L^{2}}^{2}\right)\text{d}\tau\right)\nonumber\\
\leq& C\exp\left\{C\exp\left\{C\int_{0}^{t} \Phi^{\beta}(\tau)\text{d}\tau\right\}\right\}.
\end{align*}
Similarly,
\begin{align*}
\int_{0}^{t}(\tau \|\nabla^{2} u\|_{L^{q}}^{2}+&\tau\|\nabla P\|_{L^{q}}^{2})\text{d}x
\leq C\int_{0}^{t} \left(\tau \|\varrho^{\frac{1}{2}} u_{\tau}\|_{L^{2}}^{2}\right)^{\frac{2q-1}{q^{2}-1}}
\left(\tau\|\nabla u_{\tau}\|_{L^{2}}^{2}\right)^{\frac{q(q-2)}{q^{2}-1}}\text{d}\tau\qquad\quad(\lambda =2q^{2})\nonumber\\
&+C\int_{0}^{t}\tau \|\varrho^{\frac{1}{2}} u_{t}\|_{L^{2}}^{2}\text{d}\tau +C\int_{0}^{t}(\Phi^{\beta}(\tau)+\|\nabla^{2} u\|_{L^{2}}^{2}
+\|\nabla^{3}d\|_{L^{2}}^{2})\text{d}\tau\nonumber\\
\leq & C\int_{0}^{t}\left(\tau \|\varrho^{\frac{1}{2}} u_{t}\|_{L^{2}}^{2}+\tau\|\nabla^{2} u_{\tau}\|_{L^{2}}^{2}\right)\text{d}\tau +C\int_{0}^{t}(\Phi^{\beta}(\tau)+\|\nabla^{2} u\|_{L^{2}}^{2}
+\|\nabla^{3}d\|_{L^{2}}^{2})\text{d}\tau\nonumber\\
\leq& C\exp\left\{C\exp\left\{C\int_{0}^{t} \Phi^{\beta}(\tau)\text{d}\tau\right\}\right\}.
\end{align*}
Thus we obtain \eqref{eq3.34}.

Next, it is easy to derive from $\eqref{eq1.1}_{1}$ that $\varrho \bar{x}^{a}$ satisfies
\begin{align*}
\partial_{t}(\varrho\bar{x}^{a})+ u\cdot\nabla (\varrho\bar{x}^{a}) -a\varrho\bar{x}^{a} u\cdot\nabla \ln\bar{x}=0,
\end{align*}
which implies for all $r\in [2,q]$,
\begin{align}\label{eq3.35}
\frac{d}{dt} \|\nabla(\varrho\bar{x}^{a})\|_{L^{r}}\leq& C(1+\|\nabla u\|_{L^{\infty}}+\|u\cdot\nabla\ln\bar{x}\|_{L^{\infty}}) \|\nabla(\varrho\bar{x}^{a})\|_{L^{r}}\nonumber\\
&+C\|\varrho\bar{x}^{a}\|_{L^{\infty}}(\||\nabla u||\nabla \ln\bar{x}|\|_{L^{r}}+\||u||\nabla^{2}\ln\bar{x}|\|_{L^{r}})\nonumber\\
\leq& C(\Phi^{\beta}(t)+\|\nabla^{2} u\|_{L^{2}\cap L^{q}}) \|\nabla(\varrho\bar{x}^{a})\|_{L^{r}}
+C\|\varrho\bar{x}^{a}\|_{L^{\infty}}(\|\nabla u\|_{L^{r}}+\|u\bar{x}^{-\frac{2}{5}}\|_{L^{4r}}\|\bar{x}^{-\frac{3}{2}}\|_{L^{\frac{4r}{3}}})
\nonumber\\
\leq &C(\Phi^{\beta}(t)+\|\nabla^{2} u\|_{L^{2}\cap L^{q}})(1+\|\nabla(\varrho\bar{x}^{a})\|_{L^{r}}+\|\nabla(\varrho\bar{x}^{a})\|_{L^{q}}),
\end{align}
where we have used \eqref{eq3.14}, \eqref{eq3.15} and the following calculation\footnote{We notice that the inequality \eqref{eq3.36} still holds if we replace the integral domain $B_{R}$ by $\mathbb{R}^{2}$.}
\begin{align}\label{eq3.36}
\|u\bar{x}^{-\delta}\|_{L^{\infty}} \leq& C(\delta) (\|u\bar{x}^{-\delta}\|_{L^{\frac{4}{\delta}}}) +\|\nabla(u\bar{x}^{-\delta})\|_{L^{3}})\nonumber\\
\leq& C(\delta) (\|u\bar{x}^{-\delta}\|_{L^{\frac{4}{\delta}}} +\|\nabla u\|_{L^{3}}+ \|u\bar{x}^{-\delta}\|_{L^{\frac{4}{\delta}}}\|\bar{x}^{-1}\nabla \bar{x}\|_{L^{\frac{12}{4-3\delta}}})\nonumber\\
\leq&C(\delta) (\Phi^{\beta}(t)+\|\nabla^{2} u\|_{L^{2}}), \quad \quad \text{ for } 0<\delta<1.
\end{align}
Utilizing \eqref{eq3.34}, \eqref{eq3.11} and Gronwall's inequality, it yields from \eqref{eq3.35} that
\begin{align*}
\sup_{0\leq \tau\leq t} \|\nabla (\varrho\bar{x}^{a})\|_{L^{2}\cap L^{p}} \leq \exp\left\{ C\exp\left\{C\exp\left\{C\int_{0}^{t} \Phi^{\beta}(\tau)\text{d}\tau\right\}\right\}\right\}.
\end{align*}
This, along with \eqref{eq3.15} gives \eqref{eq3.33}. This completes the proof of Lemma \ref{lem3.7}.
\end{proof}
\medskip

Now, we can deal with the proof of Proposition \ref{prop3.1}.
\medskip

\textbf{Proof of Proposition \ref{prop3.1}.} \ It follows from\eqref{eq3.3}, \eqref{eq3.4}, \eqref{eq3.5}, \eqref{eq3.10}, and \eqref{eq3.33} that
\begin{align*}
\Phi(t)\leq \exp\left\{C\exp\left\{C\exp\left\{C\int_{0}^{t}\Phi^{\beta}(\tau)\text{d}\tau\right\}\right\}\right\},
\quad\quad \tau\in (0,T_{1}].
\end{align*}
By using standard arguments yields that for $M\triangleq e^{Ce^{Ce}}$ and $T_{0}\triangleq\min\{T_{1}, (CM^{\beta})^{-1}\}$,
\begin{align*}
\sup_{0\leq t\leq T_{0}}\Phi (t)\leq M,
\end{align*}
which together with \eqref{eq3.4}, \eqref{eq3.5}, \eqref{eq3.10}, \eqref{eq3.11}, \eqref{eq3.24} and \eqref{eq3.34} ensures \eqref{eq3.2}. This completes the proof of Proposition \ref{prop3.1}.\hfill$\Box$

\subsection{The proof of Theorem \ref{thm3.1}}

With the a priori estimates obtained in Subsection 3.1, we shall give the proof of Theorem \ref{thm3.1}.  Let  $(\varrho_{0}, u_{0}, d_{0})$ be as
in Theorem \ref{thm3.1}.Then it follows from \eqref{eq1.5} that there exists a positive constant $N_{0}$ such that
\begin{align*}
\int_{B_{N_{0}}} \varrho_{0}(x)\text{d}x\geq \frac{3}{4}\int_{\mathbb{R}^{2}}\varrho(x)\text{d}x=\frac{3}{4}.
\end{align*}

Now, we construct $\varrho_{0}^{R}=\widehat{\varrho}_{0}^{T}+R^{-1} e^{-|x|^{2}}$, where $0\leq \varrho_{0}\in C^{\infty}_{0}(\mathbb{R}^{2})$ satisfies
\begin{align*}
\begin{cases}
\int_{B_{N_{0}}}\widehat{\varrho}_{0}^{R}(x)\text{d}x\geq \frac{1}{2}, \\
\widehat{\varrho}_{0}^{R}\bar{x}^{a}\rightarrow \varrho_{0}\bar{x}^{a}\quad \text{in } L^{1}(\mathbb{R}^{2})\cap H^{1}(\mathbb{R}^{2})\cap W^{1,q}(\mathbb{R}^{2}), \quad \text{as } R\rightarrow \infty.
\end{cases}
\end{align*}
Noticing that $d_{0}\in L^{2}(\mathbb{R}^{2};\mathbb{S}^{2})$, $\nabla d_{0}\bar{x}^{\frac{a}{2}}\in L^{2}(\mathbb{R}^{2})$ and  $\nabla^{2}d_{0}\in L^{2}(\mathbb{R}^{2})$, standard arguments (cf. \cite{LW08} and also \cite{HWW12}) yield that there exists $d_{0}^{R}\in H^{3}(\mathbb{R}^{2}; \mathbb{S}^{2})$ such that
$d_{0}^{R}\equiv n_{0}$ outside $B_{\frac{R}{2}}$ for some constant vector $n_{0}\in \mathbb{S}^{2}$ and
\begin{align*}
\lim_{R\rightarrow\infty}\|\nabla d_{0}^{R}- \nabla d_{0}\|_{H^{2}(\mathbb{R}^{2})} =0,\quad \lim_{R\rightarrow\infty}\|\nabla d_{0}^{R} \bar{x}^{\frac{a}{2}} - \nabla d_{0}\bar{x}^{\frac{a}{2}}\|_{L^{2}(\mathbb{R}^{2})}=0.
\end{align*}
Since $\nabla u_{0}\in L^{2}(\mathbb{R}^{2})$, choosing $v_{i}^{R}\in C_{0}^{\infty}(B_{R}) (i=1,2)$ such that for $i=1,2$,
\begin{align*}
\lim_{R\rightarrow\infty}\|v_{i}^{R}-\partial_{i}u_{0}\|_{L^{2}(\mathbb{R}^{2})}=0.
\end{align*}
Let $u_{0}^{R}$ be the unique smooth solution of the following elliptic problem :
\begin{align*}
\begin{cases}
-\Delta u_{0}^{R} +\varrho_{0}^{R} u_{0}^{R} +\nabla P_{0}^{R}=\sqrt{\varrho_{0}^{R}} h^{R}-\partial_{i}v_{i}^{R}, \quad &\text{ in }B_{R},\\
\operatorname{div}u_{0}^{R} =0,\quad&\text{ in }B_{R},\\
u_{0}^{R}=0,\quad &\text{ in }\partial B_{R},
\end{cases}
\end{align*}
where $h^{R}=(\varrho_{0}^{\frac{1}{2}} u_{0})\ast j_{\frac{1}{R}}$ with $j_{\delta}$ being the standard mollifying kernel of width $\delta$. Extending $u_{0}^{R}$ to $\mathbb{R}^{2}$  by defining $0$ outside $B_{R}$ and denoting it by $\widetilde{u}_{0}^{R}$, then by the same arguments as those for the proof of Theorem 1.1 in \cite{LJLZ} (see also \cite{LXZ}), we obtain that
\begin{align*}
\lim_{R\rightarrow \infty} \left(\|\nabla (\widetilde{u}_{0}^{R}-u_{0})\|_{L^{2}(\mathbb{R}^{2})}
+\|(\varrho_{0}^{R})^{\frac{1}{2}} \widetilde{u}_{0}^{R}-\varrho_{0}^{\frac{1}{2}} u_{0}\|_{L^{2}(\mathbb{R}^{2})}\right) =0.
\end{align*}

Then, by virtue of Lemma \ref{lem2.1}, the initial-boundary value problem \eqref{eq1.1} and \eqref{eq2.2} with the initial datum $(\varrho_{0}^{R}, u_{0}^{R}, d_{0}^{R})$ has a classical solution $(\varrho^{R}, u^{R}, P^{R},d^{R})$ on $B_{R}\times [0,T_{R}]$. Furthermore, Proposition \ref{prop3.1} shows that there exists a $T_{0}$ independent of $R$ such that \eqref{eq3.2} holds for $(\varrho^{R}, u^{R}, P^{R},d^{R})$. Extending $(\varrho^{R}, u^{R}, P^{R},d^{R})$ by zero on $\mathbb{R}^{2}\backslash B_{R}$ and denoting it by
\begin{align*}
(\widetilde{\varrho}^{R}\triangleq \varphi_{R}\varrho^{R}, \widetilde{u}^{R}, \widetilde{P}^{R}, \widetilde{d}^{R}),
\end{align*}
with $\varphi_{R}$ as in \eqref{eq3.12}, it deduces from \eqref{eq3.2} that
\begin{align}\label{eq3.37}
&\sup_{0\leq t\leq T_{0}} \left(
\|(\widetilde{\varrho}^{R})^{\frac{1}{2}} \widetilde{u}^{R}\|_{L^{2}(\mathbb{R}^{2})} +\|\nabla \widetilde{u}^{R}\|_{L^{2}(\mathbb{R}^{2})}
+\|\nabla^{2} \widetilde{d}\|_{L^{2}(\mathbb{R}^{2})}+\|\nabla \widetilde{d} \bar{x}^{\frac{a}{2}}\|_{L^{2}(\mathbb{R}^{2})}\right)\nonumber\\
\leq&\sup_{0\leq t\leq T_{0}} \left(
\|(\varrho^{R})^{\frac{1}{2}} u^{R}\|_{L^{2}(B_{R})} +\|\nabla u^{R}\|_{L^{2}(B_{R})}
+\|\nabla^{2} d\|_{L^{2}(B_{R})}+\|\nabla d \bar{x}^{\frac{a}{2}}\|_{L^{2}(B_{R})}\right)
\leq C,
\end{align}
and
\begin{align}\label{eq4.2}
\sup_{0\leq t\leq T_{0}} \|\widetilde{\varrho}^{R}\bar{x}^{a}\|_{L^{1}(\mathbb{R}^{2})\cap L^{\infty}(\mathbb{R}^{2})}\leq C \sup_{0\leq t\leq T_{0}} \|\varrho^{R}\bar{x}^{a}\|_{L^{1}(B_{R})\cap L^{\infty}(B_{R})}\leq C.
\end{align}
Similarly, it follows from \eqref{eq3.2} that for $q> 2$,
\begin{align}\label{eq4.3}
&\sup_{0\leq t\leq T_{0}}t^{\frac{1}{2}} \left(
\|(\widetilde{\varrho}^{R})^{\frac{1}{2}} \widetilde{u}_{t}^{R}\|_{L^{2}(\mathbb{R}^{2})}
+\|\widetilde{d}_{t}^{R}\|_{H^{1}(\mathbb{R}^{2})} +\|\nabla^{2} \widetilde{u}^{R}\|_{L^{2}(\mathbb{R}^{2})}
+\|\nabla^{3} \widetilde{d}^{R}\|_{L^{2}(\mathbb{R}^{2})}
\right) \nonumber\\
+&\int_{0}^{T_{0}} \left( \|(\widetilde{\varrho}^{R})^{\frac{1}{2}} \widetilde{u}_{t}^{R}\|_{L^{2}(\mathbb{R}^{2})}^{2}
+\|\nabla^{2} \widetilde{u}^{R}\|_{L^{2}(\mathbb{R}^{2})}^{2} +\|\widetilde{d}_{t}^{R}\|_{H^{1}(\mathbb{R}^{2})}^{2}
+\|\nabla^{3}\widetilde{d}^{R}\|_{L^{2}(\mathbb{R}^{2})}^{2} +\|\nabla^{2}\widetilde{d}^{R} \bar{x}^{\frac{a}{2}}\|_{L^{2}(\mathbb{R}^{2})}^{2}
\right)\text{d}t\nonumber\\
+&\int_{0}^{T_{0}}\left(\|\nabla^{2}\widetilde{u}^{R}\|_{L^{q}(\mathbb{R}^{2})}^{\frac{q+1}{q}} + t \|\nabla^{2} \widetilde{u}^{R}\|_{L^{q}(\mathbb{R}^{2})}^{2} +t\|\nabla \widetilde{u}_{t}^{R}\|_{L^{2}(\mathbb{R}^{2})}^{2}
+t\|\nabla \widetilde{d}_{t}^{R}\|_{H^{1}(\mathbb{R}^{2})}^{2}\right)
\leq C.
\end{align}
Next, for $p\in [2,q]$, it follows from \eqref{eq3.2} and \eqref{eq3.33} that
\begin{align}\label{eq4.4}
\sup_{0\leq t\leq T_{0}} \|\nabla(\widetilde{\varrho}^{R}\bar{x}^{a})\|_{L^{p}(\mathbb{R}^{2})}\leq & C
\sup_{0\leq t\leq T_{0}} \left(  \|\nabla(\varrho^{R}\bar{x}^{a})\|_{L^{p}(B_{R})} +R^{-1} \|\varrho^{R}\bar{x}^{a}\|_{L^{p}(B_{R})} \right)
\nonumber\\
\leq& C\sup_{0\leq t\leq T_{0}} \|\varrho^{R}\bar{x}^{a}\|_{H^{1}(B_{R})\cap W^{1,p}(B_{R})} \leq C,
\end{align}
which together with \eqref{eq3.36} and \eqref{eq3.2} ensures
\begin{align}\label{eq4.5}
&\int_{0}^{T_{0}} \|\widetilde{\varrho}^{R}_{t}\bar{x}^{a}\|_{L^{p}(\mathbb{R}^{2})}^{2}\text{d}t
\leq C\int_{0}^{T_{0}} \||u^{R}||\nabla \varrho^{R}| \bar{x}^{a}\|_{L^{p}(B_{R})}^{2}\text{d}t\nonumber\\
\leq & C\int_{0}^{T_{0}} \|u^{R}\bar{x}^{1-a}\|_{L^{\infty}(B_{R})}^{2}  \|\nabla\varrho \bar{x}^{a}\|_{L^{p}(B_{R})}^{2}\text{d}t\leq C.
\end{align}
By virtue of the same arguments as those of \eqref{eq3.23} and
\eqref{eq3.34}, it follows that for $q>2$
\begin{align}\label{eq3.42}
\sup_{0\leq t\leq T_{0}} t^{\frac{1}{2}} \|\nabla\widetilde{P}^{R}\|_{L^{2}(\mathbb{R}^{2})} +\int_{0}^{T_{0}}
\left(\|\nabla\widetilde{P}^{R}\|_{L^{2}(\mathbb{R}^{2})}^{2} + \|\nabla\widetilde{P}^{R}\|_{L^{q}(\mathbb{R}^{2})}^{\frac{q+1}{q}} \right)\leq C.
\end{align}

With all these estimates \eqref{eq3.37}--\eqref{eq3.42} at hand, we find that the sequence $(\widetilde{\varrho}^{R}, \widetilde{u}^{R}, \widetilde{P}^{R}, \widetilde{d}^{R})$ converges, up to the extraction of subsequences, to some limit $(\varrho, u, P, d)$ in the obvious weak sense, i.e., as $R\rightarrow \infty$, we have
\begin{align}\label{eq3.43}
\begin{cases}
&\widetilde{\varrho}^{R}\bar{x}\rightarrow \varrho \bar{x}\quad \text{ in } C(B_{N}\times [0,T_{0}]) \text{ for all } N>0,\\
 &\widetilde{\varrho}^{R}\bar{x}^{a}\rightharpoonup \varrho \bar{x}^{a}\quad \text{ weakly * in  } L^{\infty}(0,T_{0}; H^{1}(\mathbb{R}^{2})\cap W^{1,q}(\mathbb{R}^{2})),\\
 &\nabla \widetilde{d}^{R}\bar{x}^{\frac{a}{2}}\rightharpoonup \nabla d \bar{x}^{\frac{a}{2}}\quad \text{ weakly * in  } L^{\infty}(0,T_{0}; L^{2}(\mathbb{R}^{2})),\\
 &(\widetilde{\varrho}^{R})^{\frac{1}{2}}\widetilde{u}^{R} \rightharpoonup \varrho^{\frac{1}{2}} u,
  \nabla \widetilde{u}^{R} \rightharpoonup \nabla u, \nabla^{2}\widetilde{d}^{R} \rightharpoonup \nabla^{2} d \quad \text{ weakly * in }
  L^{\infty}(0,T_{0}; L^{2}(\mathbb{R}^{2})),\\
  &\nabla^{2} \widetilde{u}^{R}\rightharpoonup \nabla^{2} u, \nabla\widetilde{P}^{R}\rightharpoonup \nabla P \quad
  \text{ weakly in  } L^{\frac{q+1}{q}}(0,T_{0}; L^{q}(\mathbb{R}^{2}))\cap L^{2} (\mathbb{R}^{2}\times (0,T_{0})),\\
  &\widetilde{d}_{t}^{R}\rightharpoonup d_{t} \quad \text{ weakly in  } L^{2}(0,T_{0}; H^{1}(\mathbb{R}^{2})),\\
  &\nabla^{2} \widetilde{d}^{R}\bar{x}^{\frac{a}{2}} \rightharpoonup \nabla^{2}d\bar{x}^{\frac{a}{2}}, \nabla^{3}\widetilde{d}^{R}\rightharpoonup
  \nabla^{3} d\quad \text{ weakly in } L^{2}(\mathbb{R}^{2}\times (0,T_{0})),\\
  & t^{\frac{1}{2}}\nabla^{2}\widetilde{u}^{R}\rightharpoonup t^{\frac{1}{2}} \nabla^{2} u\quad \text{ weakly in } L^{2}(0,T_{0}; L^{q}(\mathbb{R}^{2})), \text{ weakly  * in } L^{\infty}(0,T_{0};L^{2}(\mathbb{R}^{2})),\\
  & t^{\frac{1}{2}} (\widetilde{\varrho}^{R})^{\frac{1}{2}} \widetilde{u}_{t}^{R} \rightharpoonup t^{\frac{1}{2}}\varrho^{\frac{1}{2}} u_{t},
  t^{\frac{1}{2}}\nabla \widetilde{P}^{R}\rightharpoonup t^{\frac{1}{2}}\nabla P \quad \text{ weakly * in } L^{\infty}(0,T_{0}; L^{2}(\mathbb{R}^{2})),\\
  &t^{\frac{1}{2}}\widetilde{d}_{t}^{R} \rightharpoonup  t^{\frac{1}{2}}d_{t}\quad \text{ weakly * in } L^{\infty}(0,T_{0}; H^{1}(\mathbb{R}^{2})),\\
  &t^{\frac{1}{2}}\nabla^{3}\widetilde{d}^{R}\rightharpoonup t^{\frac{1}{2}}\nabla^{3} d\quad \text{ weakly in } L^{\infty} (0,T_{0}; L^{2}(\mathbb{R}^{2})),\\
  &t^{\frac{1}{2}}\nabla \widetilde{u}_{t}^{R}\rightharpoonup t^{\frac{1}{2}}\nabla u_{t}   \quad \text{ weakly in } L^{2}(\mathbb{R}^{2}\times (0,T_{0})),\\
&t^{\frac{1}{2}}\nabla \widetilde{d}_{t}^{R}\rightharpoonup t^{\frac{1}{2}}\nabla d_{t}\quad \text{ weakly in }  L^{2}(0,T_{0}; H^{1}(\mathbb{R}^{2})),
\end{cases}
\end{align}
with
\begin{align}\label{eq3.44}
\varrho\bar{x}^{a}\in L^{\infty}(0,T_{0}; L^{1}(\mathbb{R}^{2})),\quad \inf_{0\leq t\leq T_{0}} \int_{B_{2N_{0}}} \varrho(x,t)\text{d}x\geq \frac{1}{4}.
\end{align}
Then, letting $R\rightarrow \infty$, standard arguments together with  \eqref{eq3.43} and \eqref{eq3.44} yield that $(\varrho, u, P, d)$ is a strong solution of system \eqref{eq1.1}--\eqref{eq1.2} on $\mathbb{R}^{2}\times (0,T_{0}]$ satisfying \eqref{eq1.8} and \eqref{eq1.9}. Indeed, the existence of a pressure $P$ follows immediately from the equations $\eqref{eq1.1}_{2}$ and $\eqref{eq1.1}_{4}$ by a classical consideration. Thus we complete the proof of the existence part of Theorem \ref{thm3.1}.

In what follows, we shall prove the uniqueness of the strong solutions. Let $(\varrho_{1}, u_{1}, P_{1},d_{1})$ and  $(\varrho_{2}, u_{2}, P_{2},d_{2})$ be two strong solutions satisfying \eqref{eq1.8} and \eqref{eq1.9} with the same initial data. Let $\widetilde{\varrho}=\varrho_{1}-\varrho_{2}$, $\widetilde{u}=u_{1}-u_{2}$, $\widetilde{P}=P_{1}-P_{2}$ and $\widetilde{d}=d_{1}-d_{2}$, it follows that
\begin{align}\label{eq3.45}
\begin{cases}
\widetilde{\varrho}_{t}+ u_{2}\cdot\nabla\widetilde{\varrho}+\widetilde{u}\cdot\nabla\varrho_{1}=0,\\
\varrho_{1}\widetilde{u}_{t}\!+\!\varrho_{1}u_{1}\!\cdot\!\nabla\widetilde{u}-\!\Delta\widetilde{u}=\varrho_{1} \widetilde{u}\cdot\nabla u_{2}
\!-\widetilde{\varrho} (u_{2t}\!+u_{2}\cdot\nabla u_{2})\!-\nabla \widetilde{P}\!-\operatorname{div}(\nabla\widetilde{d}\odot\nabla d_{1})\!-\operatorname{div}(\nabla d_{2}\odot\nabla \widetilde{d}),\\
\widetilde{d}_{t}-\Delta \widetilde{d}=-u_{2}\cdot\nabla \widetilde{d} -\widetilde{u}\cdot\nabla d_{1} +|\nabla d_{2}|^{2}\widetilde{d}
+\nabla(d_{1}+d_{2})\nabla \widetilde{d} d_{1},\\
\operatorname{div} \widetilde{u} =0,
\end{cases}
\end{align}
for $(x,t)\in \mathbb{R}^{2}\times (0,T_{0}]$ with
\begin{align*}
\widetilde{\varrho}(x,0) =\widetilde{u}(x,0)=\widetilde{d}(x,0)=0.
\end{align*}
Multiplying $\eqref{eq3.45}_{1}$ by $2\widetilde{\varrho}\bar{x}^{2r}$ for $r\in (1,\bar{a})$ with $\bar{a}=\min\{2,a\}$, and then integrating by parts over $\mathbb{R}^{2}$ yield
\begin{align*}
\frac{d}{dt} \!\int_{\!\mathbb{R}^{2}}\! |\widetilde{\varrho}\bar{x}^{r}|^{2}\text{d}x\!
\leq& C\|u_{2}\bar{x}^{\frac{1}{2}}\|_{\!L^{\!\infty}(\mathbb{R}^{2})}
\|\widetilde{\varrho}\bar{x}^{r}\|_{\!L^{2}(\mathbb{R}^{2})}^{2}\!
+\! C \|\widetilde{\varrho}\bar{x}^{r}\|_{L^{2}(\mathbb{R}^{2})} \|\widetilde{u}\bar{x}^{-(\bar{a}-r)}\|_{\!L^{\!\frac{2q}{(q-2)(\bar{a}-r)}}(\mathbb{R}^{2})} \|\varrho_{1} \bar{x}^{\bar{a}}\|_{\!L^{\!\frac{2q}{q-(q-2)(\bar{a}-r)}}(\mathbb{R}^{2})}\nonumber\\
\leq & C(1+\|u_{2}\|_{W^{1,q}(\mathbb{R}^{2})}) \|\widetilde{\varrho}\bar{x}^{r}\|_{L^{2}(\mathbb{R}^{2})}^{2}
+C\|\widetilde{\varrho}\bar{x}^{r}\|_{L^{2}(\mathbb{R}^{2})}(\|\nabla \widetilde{u}\|_{L^{2}(\mathbb{R}^{2})} +\|\varrho_{1}^{\frac{1}{2}} \widetilde{u}\|_{L^{2}(\mathbb{R}^{2})}),
\end{align*}
due to Sobolev's inequality, \eqref{eq1.8}, Lemma \ref{lem2.3} and \eqref{eq3.36}. This combined with Gronwall's inequality shows that for all $0\leq t\leq T_{0}$,
\begin{align}\label{eq3.46}
\|\widetilde{\varrho}\bar{x}^{r}\|_{L^{2}(\mathbb{R}^{2})}\leq C\int_{0}^{t} (\|\nabla \widetilde{u}\|_{L^{2}(\mathbb{R}^{2})} +\|\varrho_{1}^{\frac{1}{2}} \widetilde{u}\|_{L^{2}(\mathbb{R}^{2})})\text{d}\tau.
\end{align}

Multiplying $\eqref{eq3.45}_{3}$ with $\widetilde{d}$, and then integrating by parts over $\mathbb{R}^{2}$, it follows from \eqref{eq1.8} and Lemma \ref{lem2.3} that
\begin{align}\label{eq3.47}
&\frac{1}{2}\frac{d}{dt} \int_{\mathbb{R}^{2}} |\widetilde{d}|^{2}\text{d}x+\int_{\mathbb{R}^{2}} |\nabla \widetilde{d}|^{2}\text{d}x\nonumber\\
\leq& \int_{\mathbb{R}^{2}} |\widetilde{u}||\nabla d_{1}||\widetilde{d}|\text{d}x+\int_{\mathbb{R}^{2}} |\nabla d_{2}|^{2}|\widetilde{d}|^{2}\text{d}x+\int_{\mathbb{R}^{2}}|\nabla (d_{1}+d_{2})||\nabla\widetilde{d}||\widetilde{d}|\text{d}x\nonumber\\
\leq & C\|\widetilde{u}\bar{x}^{-\frac{a}{4}}\|_{L^{4}(\mathbb{R}^{2})}\|\nabla d_{1}\bar{x}^{\frac{a}{2}}\|_{L^{2}(\mathbb{R}^{2})}^{\frac{1}{2}}  \|\nabla d_{1}\|_{L^{2}(\mathbb{R}^{2})}^{\frac{1}{2}}\|\widetilde{d}\|_{L^{4}(\mathbb{R}^{2})} +C\|\nabla d_{2}\|_{L^{4}(\mathbb{R}^{2})}^{2} \|\widetilde{d}\|_{L^{4}(\mathbb{R}^{2})}^{2}\nonumber\\
 &
+C(\|\nabla d_{1}\|_{L^{4}(\mathbb{R}^{2})}+\|\nabla d_{2}\|_{L^{4}(\mathbb{R}^{2})})\|\widetilde{d}\|_{L^{4}(\mathbb{R}^{2})} \|\nabla \widetilde{d}\|_{L^{2}(\mathbb{R}^{2})}\nonumber\\
\leq& \varepsilon\|\nabla \widetilde{d}\|_{L^{2}(\mathbb{R}^{2})}^{2} +\varepsilon\|\widetilde{u}\bar{x}^{-\frac{a}{4}}\|_{L^{4}(\mathbb{R}^{2})}^{2}
+C\|\nabla d_{1}\bar{x}^{\frac{a}{2}}\|_{L^{2}(\mathbb{R}^{2})}^{2}  \|\nabla d_{1}\|_{L^{2}(\mathbb{R}^{2})}^{2} \|\widetilde{d}\|_{L^{2}(\mathbb{R}^{2})}^{2}\nonumber\\
&+C(\varepsilon)(\|\nabla d_{1}\|_{L^{2}(\mathbb{R}^{2})}^{2} \|\nabla^{2} d_{1}\|_{L^{2}(\mathbb{R}^{2})}^{2}+\|\nabla d_{2}\|_{L^{2}(\mathbb{R}^{2})}^{2} \|\nabla^{2} d_{2}\|_{L^{2}(\mathbb{R}^{2})}^{2})\|\widetilde{d}\|_{L^{2}(\mathbb{R}^{2})}^{2}\nonumber\\
\leq& \varepsilon\|\nabla \widetilde{d}\|_{L^{2}(\mathbb{R}^{2})}^{2} +\varepsilon(\|\varrho_{1} \widetilde{u}\|_{L^{2}(\mathbb{R}^{2})}^{2}+\|\nabla \widetilde{u}\|_{L^{2}(\mathbb{R}^{2})}^{2})+C(\varepsilon)\|\widetilde{d}\|_{L^{2}(\mathbb{R}^{2})}^{2},
\end{align}
where we have used $|d_{1}|=1$, and  the fact that the divergence free condition $\operatorname{div}u_{2}=0$ ensures the identity $\int_{\mathbb{R}^{2}} u_{2}\cdot\nabla\widetilde{d}\cdot\widetilde{d}\text{d}x=0$ in the first inequality.

Next, multiplying $\eqref{eq3.45}_{2}$ and $\eqref{eq3.45}_{3}$ by $\widetilde{u}$ and $-\Delta\widetilde{d}$ respectively, and adding the resulting equations together, we obtain after integration by parts over $\mathbb{R}^{2}$ that
\begin{align}\label{eq3.48}
&\frac{1}{2}\frac{d}{dt}\int_{\mathbb{R}^{2}} (\varrho_{1} |\widetilde{u}|^{2}+|\nabla \widetilde{d}|^{2})\text{d}x+\int_{\mathbb{R}^{2}} (|\nabla \widetilde{u}|^{2}+|\nabla^{2}\widetilde{d}|^{2})\text{d}x\nonumber\\
\leq &C\|\nabla u_{2}\|_{L^{\infty}(\mathbb{R}^{2})}\!\int_{\mathbb{R}^{2}}\!(\varrho_{1} |\widetilde{u}|^{2}\!+|\nabla \widetilde{d}|^{2})\text{d}x
+\!\int_{\mathbb{R}^{2}}\! |\widetilde{\varrho}||\widetilde{u}|(|u_{2t}|+|u_{2}||\nabla u_{2}|)\text{d}x +\!\int_{\mathbb{R}^{2}} \!(|\nabla d_{1}|+|\nabla d_{2}|)|\nabla \widetilde{d}||\nabla \widetilde{u}|\text{d}x\nonumber\\
&+\int_{\mathbb{R}^{2}}|\widetilde{u}||\nabla d_{1}| |\Delta \widetilde{d}|\text{d}x +\int_{\mathbb{R}^{2}}|\nabla d_{2}|^{2}|\widetilde{d}| |\Delta\widetilde{d}|\text{d}x+\int_{\mathbb{R}^{2}}(|\nabla d_{1}|+|\nabla d_{2}|)|\nabla\widetilde{d}||\Delta\widetilde{d}|\text{d}x\nonumber\\
\triangleq&C\|\nabla u_{2}\|_{L^{\infty}(\mathbb{R}^{2})}\int_{\mathbb{R}^{2}}(\varrho_{1} |\widetilde{u}|^{2}+|\nabla \widetilde{d}|^{2})\text{d}x
+II_{1}+II_{2}+\cdots+II_{5}.
\end{align}
By using H\"{o}lder's inequality, Lemma \ref{lem2.3} and \eqref{eq3.46}, we have for $r\in (1,\bar{a})$,
\begin{align*}
II_{1}\leq &C\|\widetilde{\varrho} \bar{x}^{r}\|_{L^{2}(\mathbb{R}^{2})}\|\widetilde{u}\bar{x}^{-\frac{r}{2}}\|_{L^{4}(\mathbb{R}^{2})}
\left( \| u_{2t}\bar{x}^{-\frac{r}{2}}\|_{L^{4}(\mathbb{R}^{2})} +\|\nabla u_{2}\|_{L^{\infty}(\mathbb{R}^{2})} \|u_{2}\bar{x}^{-\frac{r}{2}}\|_{L^{4}(\mathbb{R}^{2})}\right)\nonumber\\
\leq &\varepsilon \|\widetilde{u}\bar{x}^{-\frac{r}{2}}\|_{L^{4}(\mathbb{R}^{2})}^{2}+C(\varepsilon) \left(
\|\varrho_{2}^{\frac{1}{2}} u_{2t}\|_{L^{2}(\mathbb{R}^{2})}^{2} +\|\nabla u_{2t}\|_{L^{2}(\mathbb{R}^{2})}^{2} +\|\nabla u_{2}\|_{L^{\infty}(\mathbb{R}^{2})}^{2}\right)\|\widetilde{\varrho} \bar{x}^{r}\|_{L^{2}(\mathbb{R}^{2})}^{2}\nonumber\\
\leq&\varepsilon(\|\varrho_{1}^{\frac{1}{2}} \widetilde{u}\|_{\!L^{2}(\mathbb{R}^{2})}^{2}\!+\!\|\nabla\widetilde{u}\|_{\!L^{2}(\mathbb{R}^{2})}^{2}\!)
\!+\!C(\varepsilon) (1\!+\!t\|\nabla u_{2t}\|_{\!L^{2}(\mathbb{R}^{2})}^{2}\!+t\|\nabla^{2}u_{2}\|_{\!L^{q}(\mathbb{R}^{2})}^{2})\!\!\int_{0}^{t} \! (\|\varrho_{1}^{\frac{1}{2}} \widetilde{u}\|_{\!L^{2}(\mathbb{R}^{2})}^{2}\!+\|\nabla \widetilde{u}\|_{\!L^{2}(\mathbb{R}^{2})}^{2})\text{d}\tau.
\end{align*}
For $II_{2}$ and $II_{5}$, we derive from H\"{o}lder's and Gagliardo-Nirenberg inequalities, and \eqref{eq1.8} that
\begin{align*}
II_{2}+II_{5}\leq& C(\|\nabla d_{1}\|_{L^{4}(\mathbb{R}^{2})}+\|\nabla d_{2}\|_{L^{4}(\mathbb{R}^{2})}) \|\nabla \widetilde{d}\|_{L^{4}(\mathbb{R}^{2})}( \|\nabla \widetilde{u}\|_{L^{2}(\mathbb{R}^{2})}+\|\nabla^{2} \widetilde{d}\|_{L^{2}(\mathbb{R}^{2})})\nonumber\\
\leq& \varepsilon(\|\nabla \widetilde{u}\|_{L^{2}(\mathbb{R}^{2})}^{2}+\|\nabla^{2}\widetilde{d}\|_{L^{2}(\mathbb{R}^{2})}^{2})
+C(\varepsilon) \|\nabla \widetilde{d}\|_{L^{2}(\mathbb{R}^{2})}^{2}.
\end{align*}
By using  H\"{o}lder's and Gagliardo-Nirenberg inequalities, and Lemma \ref{lem2.3} again, we can estimate $II_{3}$ as
\begin{align*}
II_{3}\leq& C\|\widetilde{u}\bar{x}^{-\frac{a}{2}}\|_{L^{4}(\mathbb{R}^{2})} \|\nabla d_{1}\bar{x}^{\frac{a}{2}}\|_{L^{4}(\mathbb{R}^{2})}
\|\nabla^{2}\widetilde{d}\|_{L^{2}(\mathbb{R}^{2})}\nonumber\\
\leq&\varepsilon\|\nabla^{2}\widetilde{d}\|_{L^{2}(\mathbb{R}^{2})}^{2}
+C(\varepsilon)(1+\|\nabla^{2}d_{1}\bar{x}^{\frac{a}{2}}\|_{L^{2}(\mathbb{R}^{2})}^{2})
(\|\varrho_{1}^\frac{1}{2}\widetilde{u}\|_{L^{2}(\mathbb{R}^{2})}^{2}+\|\nabla \widetilde{u}\|_{L^{2}(\mathbb{R}^{2})}^{2}).
\end{align*}
The term $II_{4}$ can be estimated as follows
\begin{align*}
II_{4}\leq& C \|\nabla d_{2}\|_{L^{8}(\mathbb{R}^{2})}^{2}\|\widetilde{d}\|_{L^{4}(\mathbb{R}^{2})}
\|\nabla^{2}\widetilde{d}\|_{L^{2}(\mathbb{R}^{2})}
\leq \varepsilon \|\nabla^{2}\widetilde{d}\|_{L^{2}(\mathbb{R}^{2})}^{2}+ C(\varepsilon) \|\widetilde{d}\|_{H^{1}(\mathbb{R}^{2})}^{2},
\end{align*}
owing to H\"{o}lder's and  Gagliardo-Nirenberg inequalities, and \eqref{eq1.8}. Inserting these estimates $II_{i} (i=1,2,\cdots,5)$ into \eqref{eq3.48},  adding the resulting inequality with  \eqref{eq3.47}, and then choosing $\varepsilon$ suitably small lead to
\begin{align}\label{eq3.49}
G'(t)\leq C(1+\|\nabla u_{2}\|_{L^{\infty}(\mathbb{R}^{2})}+\|\nabla^{2}d_{1}\bar{x}^{\frac{a}{2}}\|_{L^{2}(\mathbb{R}^{2})}^{2}+t\|\nabla u_{2t}\|_{L^{2}(\mathbb{R}^{2})}^{2}\!+t\|\nabla^{2}u_{2}\|_{L^{q}(\mathbb{R}^{2})}^{2})G(t)
\end{align}
where $G(t)$ is defined as
\begin{align*}
G(t)\triangleq \|\varrho_{1}^{\frac{1}{2}} \widetilde{u}\|_{L^{2}(\mathbb{R}^{2})}^{2}+\|\nabla \widetilde{d}\|_{H^{1}(\mathbb{R}^{2})}^{2}+\int_{0}^{t}(\|\nabla \widetilde{u}\|_{L^{2}(\mathbb{R}^{2})}^{2}+\|\nabla^{2}\widetilde{d}\|_{L^{2}(\mathbb{R}^{2})}^{2}+\|\varrho_{1}^{\frac{1}{2}}\widetilde{u}\|_{L^{2}}^{2})
\text{d}\tau.
\end{align*}
The inequality \eqref{eq3.49} together with Gronwall's inequality and \eqref{eq1.8} implies that $G(t)=0$ for all $0< t\leq T_{0}$. Hence, we have $\widetilde{u}=0$ and $\widetilde{d}=0$ for almost everywhere $(x,t)\in \mathbb{R}^{2}\times(0,T_{0}]$. Finally, one can deduce from \eqref{eq3.46} that $\widetilde{\varrho}=0$ for almost everywhere $(x,t)\in \mathbb{R}^{2}\times(0,T_{0}]$. This completes the proof of Theorem \ref{thm3.1}.
\hfill$\Box$

\section{Proof of Theorem \ref{thm1.2}}

Before going to do it, let us  recall the following rigidity theorem, which was recently established in \cite{LLZ}.

\begin{theorem}\label{thm5.1}
Let $\varepsilon_{0}>0$ as \eqref{eq1.3}, and $M_{0}>0$. There exists a positive constant $\bar{\omega}=\bar{\omega}(\varepsilon_{0},M_{0})\in (0,1)$ such that the following holds:\\
If $d:\mathbb{R}^{2}\rightarrow \mathbb{S}^{1}$,  $\nabla d \in H^{1}(\mathbb{R}^{2})$ with
$\|\nabla d\|_{L^{2}}\leq M_{0}$ and $d_{2}\geq \varepsilon_{0}$, then
\begin{align*}
\|\nabla d\|_{L^{4}(\mathbb{R}^{2})}^{4}\leq (1-\bar{\omega})\|\Delta d\|_{L^{2}(\mathbb{R}^{2})}^{2}.
\end{align*}
Consequently for such maps the associated harmonic energy is coercive, i.e.,
\begin{align}\label{eq5.1}
\|\Delta d + |\nabla d|^{2} d\|_{L^{2}(\mathbb{R}^{2})}^{2}\geq \frac{\bar{\omega}}{2} \left(\|\Delta d\|_{L^{2}(\mathbb{R}^{2})}^{2}+\|\nabla d\|_{L^{4}(\mathbb{R}^{2})}^{4}\right).
\end{align}
\end{theorem}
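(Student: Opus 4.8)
The plan is to use the geometric condition to pass to the scalar phase, where the inequality becomes a clean estimate of $\int|\nabla\phi|^{4}$ by $\int(\Delta\phi)^{2}$. Since $\nabla d\in H^{1}(\mathbb{R}^{2})$, in two dimensions $d\in W^{2,2}_{\mathrm{loc}}\hookrightarrow C^{0}$, so $d$ is continuous; because $d_{2}\geq\varepsilon_{0}>0$ the image lies in the open upper semicircle, which is simply connected, so $d$ admits a single-valued continuous lift $d=(\cos\phi,\sin\phi)$ with $\phi$ taking values in $[\arcsin\varepsilon_{0},\,\pi-\arcsin\varepsilon_{0}]$. In particular $\phi$ is bounded, $\nabla\phi\in H^{1}(\mathbb{R}^{2})$, and $\|\nabla\phi\|_{L^{2}}=\|\nabla d\|_{L^{2}}\leq M_{0}$. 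A direct computation gives $|\nabla d|^{2}=|\nabla\phi|^{2}$ and $\Delta d=-|\nabla\phi|^{2}d+(\Delta\phi)e_{\phi}$ with $e_{\phi}=(-\sin\phi,\cos\phi)\perp d$; hence $\Delta d+|\nabla d|^{2}d=(\Delta\phi)e_{\phi}$, $|\Delta d|^{2}=|\nabla\phi|^{4}+(\Delta\phi)^{2}$, and therefore
\[
\|\Delta d\|_{L^{2}}^{2}=\|\nabla d\|_{L^{4}}^{4}+\|\Delta d+|\nabla d|^{2}d\|_{L^{2}}^{2}=\|\nabla\phi\|_{L^{4}}^{4}+\|\Delta\phi\|_{L^{2}}^{2}.
\]
Writing $A=\int|\nabla\phi|^{4}$ and $B=\int(\Delta\phi)^{2}$, the target bound $\|\nabla d\|_{L^{4}}^{4}\leq(1-\bar\omega)\|\Delta d\|_{L^{2}}^{2}$ is equivalent to $A\leq\frac{1-\bar\omega}{\bar\omega}B$, so it suffices to prove $A\leq CB$ for some $C=C(\varepsilon_{0})$ and then set $\bar\omega=(1+C)^{-1}$.

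The core estimate exploits the bounded oscillation of $\phi$. Put $\tilde\phi=\phi-\pi/2$, so that $\|\tilde\phi\|_{L^{\infty}}\leq\pi/2-\arcsin\varepsilon_{0}=:L(\varepsilon_{0})$ while $\nabla\tilde\phi=\nabla\phi$. Using the identity $|\nabla\phi|^{2}=\operatorname{div}(\tilde\phi\nabla\phi)-\tilde\phi\,\Delta\phi$, I multiply by $|\nabla\phi|^{2}$, integrate, and integrate the divergence term by parts to obtain
\[
A=-\int\tilde\phi\,\nabla(|\nabla\phi|^{2})\cdot\nabla\phi\,dx-\int\tilde\phi\,|\nabla\phi|^{2}\,\Delta\phi\,dx.
\]
Bounding the first integrand by $2\|\tilde\phi\|_{\infty}|\nabla^{2}\phi|\,|\nabla\phi|^{2}$ and the second by $\|\tilde\phi\|_{\infty}|\nabla\phi|^{2}|\Delta\phi|$, Hölder's inequality together with the standard identity $\|\nabla^{2}\phi\|_{L^{2}}=\|\Delta\phi\|_{L^{2}}$ on $\mathbb{R}^{2}$ (valid since $\nabla\phi$ is curl-free and lies in $\dot H^{1}$) yields $A\leq 3L\,A^{1/2}B^{1/2}$, hence $A\leq 9L^{2}B$. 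Thus $C=9L(\varepsilon_{0})^{2}$ works; notably the resulting constant depends only on $\varepsilon_{0}$, which is consistent with (indeed stronger than) the claimed dependence on $(\varepsilon_{0},M_{0})$.

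To conclude, set $\bar\omega=(1+9L^{2})^{-1}\in(0,1)$. Then $A\leq(1-\bar\omega)(A+B)=(1-\bar\omega)\|\Delta d\|_{L^{2}}^{2}$, which is the first inequality. For the coercivity \eqref{eq5.1}, the identity of the first step gives $\|\Delta d+|\nabla d|^{2}d\|_{L^{2}}^{2}=B=\|\Delta d\|_{L^{2}}^{2}-A\geq\bar\omega\|\Delta d\|_{L^{2}}^{2}$; since also $\|\nabla d\|_{L^{4}}^{4}=A\leq\|\Delta d\|_{L^{2}}^{2}$, we get $\|\Delta d+|\nabla d|^{2}d\|_{L^{2}}^{2}\geq\frac{\bar\omega}{2}\bigl(\|\Delta d\|_{L^{2}}^{2}+\|\nabla d\|_{L^{4}}^{4}\bigr)$, as required.

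The main obstacle is not the algebra but the two foundational regularity/decay points. First, the global existence and $H^{2}_{\mathrm{loc}}$-regularity of the lift $\phi$: this rests on the continuity of $d$, the simple connectedness of the arc $\mathbb{S}^{1}\cap\{d_{2}>0\}$, and the chain rule for $\phi=\operatorname{arg}d$, which is smooth in $(d_{1},d_{2})$ away from $d_{2}=0$. Second, the vanishing of the boundary terms in the two integrations by parts: this is justified by inserting cutoffs $\chi_{R}$ and observing that the error terms are controlled by $R^{-1}\|\tilde\phi\|_{\infty}\int_{R<|x|<2R}|\nabla\phi|^{3}\,dx$, which tends to $0$ as $R\to\infty$ because $\nabla\phi\in L^{4}(\mathbb{R}^{2})$ (Hölder produces a factor $R^{1/2}$ multiplying a decaying $L^{4}$-tail). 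Once these two points are in place the argument closes cleanly.
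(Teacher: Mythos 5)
Your proof is correct as a proof of the statement as written. Note first that the paper itself gives no proof of this theorem: it is imported verbatim from the cited reference [LLZ] (Lei--Li--Zhang), so there is no in-paper argument to compare against. Your phase-lift argument is a clean, self-contained alternative. The key points all check out: continuity of $d$ from $\nabla d\in H^{1}(\mathbb{R}^{2})\Rightarrow d\in H^{2}_{\mathrm{loc}}\hookrightarrow C^{0}$ in two dimensions; existence and $H^{2}_{\mathrm{loc}}$-regularity of the lift $\phi=\arg d$ valued in $[\arcsin\varepsilon_{0},\pi-\arcsin\varepsilon_{0}]$; the orthogonal decomposition $\Delta d=-|\nabla\phi|^{2}d+(\Delta\phi)e_{\phi}$, which gives the exact identity $\|\Delta d\|_{L^{2}}^{2}=A+B$ with $A=\|\nabla\phi\|_{L^{4}}^{4}$, $B=\|\Delta\phi\|_{L^{2}}^{2}$ and reduces both claims to $A\leq C(\varepsilon_{0})B$; the identity $\|\nabla^{2}\phi\|_{L^{2}}=\|\Delta\phi\|_{L^{2}}$ for curl-free $\nabla\phi\in H^{1}(\mathbb{R}^{2})$; and the cutoff justification of the integrations by parts, where the annulus error is indeed $O(R^{-1/2})$ times a vanishing $L^{4}$-tail. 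The resulting $\bar\omega=(1+9L(\varepsilon_{0})^{2})^{-1}$ depends only on $\varepsilon_{0}$, which is stronger than the stated dependence on $(\varepsilon_{0},M_{0})$. One caveat worth flagging, though it concerns the paper rather than your argument: the theorem is stated for $\mathbb{S}^{1}$-valued maps, but it is invoked in Lemma 4.2 for $\mathbb{S}^{2}$-valued $d$ with $d_{3}\geq\varepsilon_{0}$; your lift is specific to the circle and does not transfer to that setting (the target arc argument fails for $\mathbb{S}^{2}$), so the $\mathbb{S}^{2}$ case genuinely requires the different mechanism of [LLZ] (e.g.\ testing against the third component / a div--curl structure). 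For the statement as posed, your proof stands.
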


In what follows, for $p\in[1,\infty]$ and $k\geq 0$, we denote
\begin{align*}
\int f\text{d}x=\int_{\mathbb{R}^{2}} f\text{d}x, \quad L^{p}=L^{p}(\mathbb{R}^{2}),\quad W^{1,p}=W^{1,p}(\mathbb{R}^{2}), \quad H^{k}=W^{k,2},
\end{align*}
and denote by $\|\cdot\|_{X}$
 the norm of the $X(\mathbb{R}^{2})$-functions, for simplicity.

\subsection{Lower order estimates}

In this subsection, we shall establish some necessary a priori lower order estimates for strong solutions $(\varrho, u, P, d)$ to the Cauchy problem of system \eqref{eq1.1}--\eqref{eq1.2}. Let $T>0$ be a fixed time and $(\varrho, u, P, d)$ be the strong solution to system \eqref{eq1.1}--\eqref{eq1.2} on $\mathbb{R}^{2}\times (0,T]$ with initial data $(\varrho_{0}, u_{0},d_{0})$ satisfying \eqref{eq1.3} and \eqref{eq1.5}--\eqref{eq1.7}.

\begin{lemma}\label{lem4.2}
There exists a positive constant $C$  depending only on $\varepsilon_{0}$, $\|\varrho_{0}\|_{L^{1}\cap L^{\infty}}$, $\|\varrho^{\frac{1}{2}}_{0} u_{0}\|_{L^{2}}$,
$\|\nabla u_{0}\|_{L^{2}}$, $\|\nabla d_{0}\|_{L^{2}}$ and $\|\nabla^{2} d_{0}\|_{L^{2}}$ such that
\begin{align}\label{eq5.2}
&\sup_{0\leq t\leq T}\left(\|\varrho\|_{L^{1}\cap L^{\infty}}+\|\nabla u\|_{L^{2}}^{2} +\|\nabla^{2} d\|_{L^{2}}^{2}\right)
+\int_{0}^{T} \left( \|\varrho^{\frac{1}{2}} \dot{u}\|_{L^{2}}^{2} +\|\nabla^{3} d\|_{L^{2}}^{2}\right)\text{d}t\leq C,
\end{align}
where $\dot{u}\triangleq u_{t}+u\cdot\nabla u$. Furthermore,  we have
\begin{align}\label{eq5.3}
&\sup_{0\leq t\leq T} t\left( \|\nabla u\|_{L^{2}}^{2} +\|\nabla^{2} d\|_{L^{2}}^{2}\right)
+\int_{0}^{T} t\left( \|\varrho^{\frac{1}{2}} \dot{u}\|_{L^{2}}^{2} +\|\nabla^{3} d\|_{L^{2}}^{2}\right)\text{d}t\leq C.
\end{align}
\end{lemma}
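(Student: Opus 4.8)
The strategy is to upgrade the basic energy law \eqref{eq3.3} to bounds whose constants are independent of $T$, by combining the rigidity theorem (Theorem \ref{thm5.1}) with a Hoff-type argument built on the material derivative $\dot u=u_t+u\cdot\nabla u$. First I would record the quantities that are already global. Since $\operatorname{div}u=0$, \eqref{eq3.6} gives $\|\varrho\|_{L^1\cap L^\infty}=\|\varrho_0\|_{L^1\cap L^\infty}$ for all $t$, while \eqref{eq3.3} yields $\sup_t(\|\varrho^{1/2}u\|_{L^2}^2+\|\nabla d\|_{L^2}^2)\le M_0$ together with $\int_0^T(\|\nabla u\|_{L^2}^2+\|\Delta d+|\nabla d|^2d\|_{L^2}^2)\,dt\le C$, all independent of $T$, where $M_0$ depends only on the initial data. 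The geometric condition is propagated by a maximum principle: the third component solves $\partial_t d_3+u\cdot\nabla d_3-\Delta d_3=|\nabla d|^2 d_3$ with nonnegative zeroth-order coefficient $|\nabla d|^2$, so at an interior spatial minimum of $d_3$ one has $\partial_t d_3\ge |\nabla d|^2 d_3\ge 0$ whenever $d_3\ge 0$, whence $d_3\ge\varepsilon_0$ is preserved. With $\|\nabla d\|_{L^2}\le M_0$ and $d_3\ge\varepsilon_0$ in hand, Theorem \ref{thm5.1} converts the $L^2_t$ control of $\|\Delta d+|\nabla d|^2d\|_{L^2}$ into $\int_0^T(\|\Delta d\|_{L^2}^2+\|\nabla d\|_{L^4}^4)\,dt\le C$; since $\|\Delta d\|_{L^2}=\|\nabla^2 d\|_{L^2}$ on $\mathbb{R}^2$, this already gives the $T$-independent bound $\int_0^T\|\nabla^2 d\|_{L^2}^2\,dt\le C$ that keeps all later constants $T$-independent.

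Next I would run the Hoff estimate. Writing $\varrho\dot u=\Delta u-\nabla P-\operatorname{div}(\nabla d\odot\nabla d)$ and testing with $\dot u$, using $\int\nabla P\cdot u_t\,dx=-\int P\operatorname{div}u_t\,dx=0$ and $\int\nabla P\cdot(u\cdot\nabla u)\,dx=-\int P\,\partial_i u_j\partial_j u_i\,dx$, produces
\begin{align*}
\frac12\frac{d}{dt}\|\nabla u\|_{L^2}^2+\|\varrho^{1/2}\dot u\|_{L^2}^2
=\int\Delta u\cdot(u\cdot\nabla u)\,dx+\int P\,\partial_i u_j\partial_j u_i\,dx-\int\operatorname{div}(\nabla d\odot\nabla d)\cdot\dot u\,dx.
\end{align*}
The cubic term, after integration by parts and $\operatorname{div}u=0$, is $-\int\partial_ku_i\partial_ku_j\partial_ju_i\,dx\le C\|\nabla u\|_{L^2}^2\|\nabla^2u\|_{L^2}$ by Gagliardo--Nirenberg. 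The pressure term is handled by Lemma \ref{lem2.6}: since $\partial_iu_j\partial_ju_i=-2\det(\nabla u)=-2\,\nabla^\perp u_2\cdot\nabla u_1$ has the required div-curl structure, part (i) gives $\|\partial_iu_j\partial_ju_i\|_{\mathcal H^1}\le C\|\nabla u\|_{L^2}^2$, while part (ii) gives $\|P\|_{BMO}\le C\|\nabla P\|_{L^2}$, so $|\int P\,\partial_iu_j\partial_ju_i|\le C\|\nabla P\|_{L^2}\|\nabla u\|_{L^2}^2$. The Stokes estimate applied to $-\Delta u+\nabla P=-\varrho\dot u-\operatorname{div}(\nabla d\odot\nabla d)$ gives $\|\nabla^2 u\|_{L^2}+\|\nabla P\|_{L^2}\le C(\|\varrho^{1/2}\dot u\|_{L^2}+\||\nabla d||\nabla^2 d|\|_{L^2})$, so after Young's inequality the $\nabla^2u$ and $\nabla P$ contributions are absorbed into $\|\varrho^{1/2}\dot u\|_{L^2}^2$, leaving a factor $\|\nabla u\|_{L^2}^2$ multiplying $\|\nabla u\|_{L^2}^2$, which is time-integrable by Step 1 and hence admissible as a Gronwall coefficient.

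The coupling term I would rewrite as $-\int\operatorname{div}(\nabla d\odot\nabla d)\cdot\dot u=\int(\nabla d\odot\nabla d):\nabla\dot u$ and extract the time derivative $\frac{d}{dt}\int(\nabla d\odot\nabla d):\nabla u$; the remainder carries $\nabla d_t$ and is closed against the parabolic $d$-estimate. For the latter, from the identity $\|\nabla d_t-\Delta\nabla d\|_{L^2}^2=\|\nabla(u\cdot\nabla d)-\nabla(|\nabla d|^2 d)\|_{L^2}^2$ used already in \eqref{eq3.20} (equivalently differentiating $\eqref{eq1.1}_3$ and pairing as in \eqref{eq3.8}), one obtains
\begin{align*}
\frac12\frac{d}{dt}\|\nabla^2 d\|_{L^2}^2+\|\nabla^3 d\|_{L^2}^2
\le C\big(\||\nabla d||\nabla^2 d|\|_{L^2}^2+\||\nabla u||\nabla d|\|_{L^2}^2+\||u||\nabla^2 d|\|_{L^2}^2+\|\nabla d\|_{L^6}^6\big),
\end{align*}
where the quantity $\||\nabla d||\nabla^2 d|\|_{L^2}$ is dominated using the harmonic-map structure (testing \eqref{eq3.8} against $\nabla d\,\Delta|\nabla d|^2$, as indicated in the introduction) together with the rigidity bound on $\|\nabla d\|_{L^4}^4$, and the remaining terms are controlled by $\|\nabla^2u\|_{L^2}$, the weighted bounds of Lemma \ref{lem2.4} applied to $u$, and \eqref{eq3.5}. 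Adding a suitable multiple of this to the velocity estimate yields a closed inequality $\frac{d}{dt}\mathcal E+\mathcal D\le g(t)\,\mathcal E+h(t)$ with $\mathcal E\simeq\|\nabla u\|_{L^2}^2+\|\nabla^2 d\|_{L^2}^2$, $\mathcal D\simeq\|\varrho^{1/2}\dot u\|_{L^2}^2+\|\nabla^3 d\|_{L^2}^2$, and $\int_0^T(g+h)\,dt\le C$ independent of $T$; Gronwall then gives \eqref{eq5.2}. For \eqref{eq5.3} I would multiply the same inequality by $t$, use $\frac{d}{dt}(t\mathcal E)=\mathcal E+t\frac{d}{dt}\mathcal E$, and integrate, absorbing $\int_0^t\mathcal E\,ds\le C$ (finite by Step 1 and \eqref{eq5.2}) to conclude $t\mathcal E(t)+\int_0^t s\mathcal D\,ds\le C$.

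The hard part will be keeping every constant independent of $T$: this hinges entirely on the rigidity theorem, which alone makes $\int_0^T\|\nabla^2 d\|_{L^2}^2\,dt$ finite; on the Hardy--$BMO$ estimate, which tames $\int P\,\partial_iu_j\partial_ju_i$ without producing a power of $\|\nabla u\|_{L^2}$ that fails to be time-integrable; and on arranging the coupling and harmonic-map terms so that the coefficients $g,h$ in the final Gronwall inequality are $L^1$ in time with $T$-independent norm rather than merely locally integrable. The interlocking of the velocity estimate (which needs $\||\nabla d||\nabla^2 d|\|_{L^2}$) with the $d$-estimate (which needs $\|\nabla^2 u\|_{L^2}$) is the delicate point, and is resolved by summing the two with carefully chosen weights so that the top-order dissipations $\|\varrho^{1/2}\dot u\|_{L^2}^2$ and $\|\nabla^3 d\|_{L^2}^2$ absorb all cross terms.
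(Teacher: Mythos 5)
Your architecture coincides with the paper's: conservation of $\|\varrho\|_{L^{1}\cap L^{\infty}}$, the energy law, the maximum principle giving $d_{3}\geq\varepsilon_{0}$, Theorem \ref{thm5.1} to convert $\int_{0}^{T}\|\Delta d+|\nabla d|^{2}d\|_{L^{2}}^{2}\text{d}t$ into the $T$-independent bound $\int_{0}^{T}(\|\Delta d\|_{L^{2}}^{2}+\|\nabla d\|_{L^{4}}^{4})\text{d}t\leq C$ (this is exactly \eqref{eq5.4}), the test of the momentum equation against $\dot{u}$ with the Hardy--$BMO$ duality of Lemma \ref{lem2.6} for $\int P\,\partial_{i}u_{j}\partial_{j}u_{i}\text{d}x$, the Stokes estimate \eqref{eq4.7}, the extraction of $\frac{d}{dt}\int(\nabla d\odot\nabla d)\cdot\nabla u\,\text{d}x$ from the coupling term, the weighted sum with a second-order estimate on $d$, Gronwall, and the $t$-weight for \eqref{eq5.3}. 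All of this matches \eqref{eq4.5}--\eqref{eq4.10}. (Testing \eqref{eq3.8} against $\nabla d\,\Delta|\nabla d|^{2}$ is not needed here; in this lemma the paper simply uses $\||\nabla d||\nabla^{2}d|\|_{L^{2}}^{2}\leq\|\nabla d\|_{L^{2}}\|\nabla^{2}d\|_{L^{2}}^{2}\|\nabla^{3}d\|_{L^{2}}$ and absorbs the $\|\nabla^{3}d\|_{L^{2}}$ factor; the $\Delta|\nabla d|^{2}$ device is reserved for Lemma \ref{lem4.3}.)

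The one step that fails as written is your second-order estimate for $d$. You propose to obtain it from $\|\nabla d_{t}-\Delta\nabla d\|_{L^{2}}^{2}=\|\nabla(u\cdot\nabla d)-\nabla(|\nabla d|^{2}d)\|_{L^{2}}^{2}$ as in \eqref{eq3.20}, which forces you to bound $\||u||\nabla^{2}d|\|_{L^{2}}^{2}$, and you propose to control that term via Lemma \ref{lem2.4} and \eqref{eq3.5}. That requires $\|\nabla^{2}d\,\bar{x}^{\frac{a}{2}}\|_{L^{2}}$, which at this stage of the global argument is not available with a $T$-independent constant: \eqref{eq3.5} holds only on the local interval with a constant of the form $C\exp\{\int_{0}^{t}\Phi\}$, and its global counterpart \eqref{eq4.31} is proved in Lemma \ref{lem4.6} \emph{using} \eqref{eq5.2}--\eqref{eq5.3} as input, so the argument would be circular; and since $u\notin L^{2}$ when vacuum occurs at infinity there is no unweighted substitute. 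The paper sidesteps the term entirely: it pairs $\eqref{eq1.1}_{3}$ with $\nabla\Delta d$ rather than squaring the equation, so that the transport contribution $\int(u\cdot\nabla\nabla d)\cdot\Delta\nabla d\,\text{d}x$ can be integrated by parts using $\operatorname{div}u=0$ into a term involving only $\nabla u$ and $|\nabla^{2}d|^{2}$, with no factor of $|u|$ at all (see \eqref{eq4.09}); this is precisely the device the introduction advertises as one of the new points. The same caveat applies to your plan to leave a $\nabla d_{t}$ remainder in $\overline{I}_{3}$ and ``close it against the parabolic $d$-estimate'': an honest bound on $\|\nabla d_{t}\|_{L^{2}}$ reintroduces $\||u||\nabla^{2}d|\|_{L^{2}}$, which is why the paper instead substitutes $\nabla d_{t}=\Delta\nabla d-\nabla(u\cdot\nabla d)+\nabla(|\nabla d|^{2}d)$ from \eqref{eq3.8} directly into $\overline{I}_{3}$ and integrates by parts there as well. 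With that replacement your plan goes through and is the paper's proof.
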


\begin{proof}
We first notice that, the  basic energy law \eqref{eq3.3} and the identity \eqref{eq3.6} still holds in $\mathbb{R}^{2}$. Under the assumption \eqref{eq1.3}, then it follows from the standard maximum principle method that $d_{3}\geq \varepsilon_{0}$ (see \cite{LW1}).
Hence, \eqref{eq1.3} and \eqref{eq3.3} together with \eqref{eq5.1} in Theorem \ref{thm5.1},  it follows that for all $t\geq 0$
\begin{align}\label{eq5.4}
&\sup_{0\leq \tau\leq t} \left(
\|\varrho^{\frac{1}{2}} u\|_{L^{2}}^{2}+\|\nabla d\|_{L^{2}}^{2}\right)
+\frac{\bar{\omega}}{2}\int_{0}^{t}\left( \|\nabla u\|_{L^{2}}^{2}+\|\Delta d\|_{L^{2}}^{2} +\|\nabla d\|_{L^{4}}^{4}\right)\text{d}\tau
\leq \|\varrho_{0}^{\frac{1}{2}} u_{0}\|_{L^{2}}^{2}+\|\nabla d_{0}\|_{L^{2}}^{2}.
\end{align}

Now,   multiplying  $\eqref{eq1.1}_{2}$ by $\dot{u}$,   and integrating
over $\mathbb{R}^{2}$, it follows that
\begin{align}\label{eq4.5}
\int \varrho |\dot{u}|^{2}\text{d}x=&\int \Delta u\cdot\dot{u}\text{d}x -\int \nabla P\cdot\dot{u}\text{d}x-\int \operatorname{div}(\nabla d\odot\nabla d) \cdot\dot{u}\text{d}x\nonumber\\
\triangleq& \overline{I}_{1}+\overline{I}_{2}+\overline{I}_{3}.
\end{align}
By using the definition of $\dot{u}$, and the Gagliardo-Nirenberg inequality, we have
\begin{align*}
\overline{I}_{1}=&\int \Delta u\cdot(u_{t}+u\cdot\nabla u)\text{d}x=-\int \nabla u\cdot(\nabla u_{t}+\nabla (u\cdot\nabla u))\text{d}x\nonumber\\
\leq& -\!\frac{1}{2}\frac{d}{dt} \|\nabla u\|_{L^{2}}^{2}-\sum_{i,j,k=1}^{2}\int \partial_{i} u_{j}\partial_{i}u_{k}\partial_{k}u_{j} \text{d}x\nonumber\\
\leq&-\!\frac{1}{2}\frac{d}{dt} \|\nabla u\|_{L^{2}}^{2} +\|\nabla u\|_{L^{3}}^{3}\leq -\!\frac{1}{2}\frac{d}{dt} \|\nabla u\|_{L^{2}}^{2} +C\|\nabla u\|_{L^{2}}^{2} \|\nabla^{2} u\|_{L^{2}},
\end{align*}
where we have used the divergence free condition $\eqref{eq1.1}_{4}$ in the first inequality. By integrating by parts, and then using the divergence free condition $\eqref{eq1.1}_{4}$ and the duality of $\mathcal{H}^{1}(\mathbb{R}^{2})$ and $BMO(\mathbb{R}^{2})$ (see \cite{Stein}, Charpter IV), it follows that
\begin{align*}
\overline{I}_{2}=&-\int \nabla P\cdot(u_{t}+u\cdot\nabla u)\text{d}x =\int P \nabla(u\cdot\nabla u)\text{d}x\nonumber\\
=&\sum_{i,j=1}^{2}\int P \partial_{i} u_{j}\partial_{j} u_{i}\text{d}x\leq C\sum_{i=1}^{2}\|P\|_{BMO} \|\partial_{i}u\cdot\nabla u_{i}\|_{\mathcal{H}^{1}}.
\end{align*}
Notice that $\operatorname{div}(\partial_{i} u)=\partial_{i} \operatorname{div}u =0$ and $\nabla^{\perp}\cdot(\nabla u_{i})=0$ for $i=1,2$. Hence, the above inequality together with Lemma \ref{lem2.6} ensures that
\begin{align*}
\overline{I}_{2}\leq C \sum_{i=1}^{2}\|P\|_{BMO} \|\partial_{i}u\cdot\nabla u_{i}\|_{\mathcal{H}^{1}}\leq C \|\nabla P\|_{L^{2}}\|\nabla u\|_{L^{2}}^{2}.
\end{align*}
To bound the term $\overline{I}_{3}$, integrating by parts together with \eqref{eq3.8}, H\"{o}lder's  and Gagliardo-Nirenberg inequalities gives
\begin{align*}
\overline{I}_{3}=&\int (\nabla d\odot\nabla d)\cdot\nabla u_{t}\text{d}x -\int \operatorname{div}(\nabla d\odot\nabla d)\cdot(u\cdot\nabla u)\text{d}x\nonumber\\
=& \frac{d}{dt}\!\int \!(\nabla d\odot\nabla d)\cdot\nabla u\text{d}x -\!\!\int\! (\nabla d_{t}\odot\nabla d)\cdot\!\nabla u\text{d}x -\!\!\int\! (\nabla d\odot\nabla d_{t})\cdot\!\nabla u\text{d}x
-\!\!\int\!\operatorname{div}(\nabla d\odot\nabla d)\cdot\!(u\cdot\!\nabla u)\text{d}x\nonumber\\
=& \frac{d}{dt}\!\int \!(\nabla d\odot\nabla d)\cdot\!\nabla u\text{d}x -\!\!\int\! [(\Delta\nabla d-\nabla(u\cdot \nabla d)+\nabla(|\nabla d|^{2}d))\odot\nabla d]\cdot\!\nabla u\text{d}x\nonumber\\
&-\int\! [\nabla d\odot(\Delta\nabla d-\nabla(u\cdot \nabla d)+\nabla(|\nabla d|^{2}d))]\cdot\nabla u\text{d}x
-\!\!\int\!\operatorname{div}(\nabla d\odot\nabla d)\cdot(u\cdot\nabla u)\text{d}x\nonumber\\
=& \frac{d}{dt}\!\int \!(\nabla d\odot\!\nabla d)\cdot\!\nabla u\text{d}x -\!\!\int\! [(\Delta\nabla d-\!\nabla u\!\cdot\! \nabla d+\!\nabla(|\nabla d|^{2}d))\odot\!\nabla d]\cdot\!\nabla u\text{d}x\!+\!\!\sum_{i,j,k=1}^{2}\!\!\int\! u_{k}\partial_{k}\partial_{i}d\cdot \partial_{j}d \partial_{j}u_{i}\text{d}x\nonumber\\
&-\int\! [\nabla d\odot(\Delta\nabla d-\nabla u\cdot \nabla d+\nabla(|\nabla d|^{2}d))]\cdot\nabla u\text{d}x
+\!\sum_{i,j,k=1}^{2}\int u_{k}\partial_{i}d\cdot \partial_{k} \partial_{j}d \partial_{j} u_{i}\text{d}x\nonumber\\
&+\!\sum_{i,j,k=1}^{2}\!\int\!\partial_{i} d \cdot\partial_{j} d \partial_{j}(u_{k}\partial_{k} u_{i})\text{d}x\nonumber\\
=& \frac{d}{dt}\!\int \!(\nabla d\odot\nabla d)\cdot\nabla u\text{d}x - \!\!\int\! [(\Delta\nabla d-\!\nabla u\cdot \nabla d+\!\nabla(|\nabla d|^{2}d))\odot\!\nabla d]\cdot\nabla u\text{d}x\nonumber\\
&-\int\! [\nabla d\odot(\Delta\nabla d-\nabla u\cdot \nabla d+\nabla(|\nabla d|^{2}d))]\cdot\nabla u\text{d}x
+\!\sum_{i,j,k=1}^{2}\int \partial_{i}d\cdot \partial_{j}d \partial_{j} u_{k}\partial_{k} u_{i}\text{d}x\nonumber\\
\leq& \frac{d}{dt}\!\int \!(\nabla d\odot\nabla d)\cdot\nabla u\text{d}x +C\|\nabla^{3} d\|_{L^{2}} \|\nabla d\|_{L^{6}} \|\nabla u\|_{L^{3}}
+C\|\nabla d\|_{L^{6}}^{2}\|\nabla u\|_{L^{3}}^{3}\nonumber\\
&+C\|\nabla d\|_{L^{6}}^{4} \|\nabla u\|_{L^{3}} +C\|\nabla^{2}d\|_{L^{3}}\|\nabla d\|_{L^{6}}^{2} \|\nabla u\|_{L^{3}}\nonumber\\
\leq& \frac{d}{dt}\!\int \!(\nabla d\odot\nabla d)\cdot\nabla u\text{d}x +\frac{\varepsilon}{4}\|\nabla^{3} d\|_{L^{2}}^{2}+C\|\nabla u\|_{L^{3}}^{3}+C\|\nabla d\|_{L^{6}}^{6}+C\|\nabla^{2} d\|_{L^{3}}^{\frac{3}{2}} \|\nabla d\|_{L^{6}}^{3}\quad (\text{with } \varepsilon>0)\nonumber\\
\leq & \frac{d}{dt}\!\int \!(\nabla d\odot\nabla d)\cdot\nabla u\text{d}x +\frac{\varepsilon}{2}\|\nabla^{3} d\|_{L^{2}}^{2}+C\|\nabla u\|_{L^{3}}^{3}+C\|\nabla d\|_{L^{6}}^{6}\nonumber\\
\leq & \frac{d}{dt}\!\int \!(\nabla d\odot\nabla d)\cdot\nabla u\text{d}x +\frac{\varepsilon}{2}\|\nabla^{3} d\|_{L^{2}}^{2}+C\|\nabla u\|_{L^{2}}^{2}
\|\nabla^{2} u\|_{L^{2}}+C\|\nabla d\|_{L^{2}}^{2}\|\nabla^{2} d\|_{L^{2}}^{4}.
\end{align*}
Inserting the estimates of $\overline{I}_{i} (i=1,2,3)$ into \eqref{eq4.5}, and then using \eqref{eq5.4},  it follows that
\begin{align}\label{eq4.6}
&\frac{d}{dt}\left(\frac{1}{2}\|\nabla u\|_{L^{2}}^{2}-\int(\nabla d\odot\nabla d)\cdot\nabla u\text{d}x\right)+\|\varrho^{\frac{1}{2}} \dot{u}\|_{L^{2}}^{2}\nonumber\\
\leq& \frac{\varepsilon}{2}\|\nabla^{3} d\|_{L^{2}}^{2}+C \|\nabla^{2} d\|_{L^{2}}^{4}
+C(\|\nabla^{2} u\|_{L^{2}}+\|\nabla P\|_{L^{2}})\|\nabla u\|_{L^{2}}^{2}.
\end{align}

Notice that $(\varrho, u, P, d)$ satisfies the following Stokes system
\begin{align*}
\begin{cases}
-\Delta u+\nabla P =-\varrho\dot{u} -\operatorname{div}(\nabla d\odot\nabla d),\quad& x\in\mathbb{R}^{2},\\
\operatorname{div}u=0,\quad& x\in\mathbb{R}^{2}\\
u(x)\rightarrow 0,\quad& |x|\rightarrow \infty.
\end{cases}
\end{align*}
Applying the standard $L^{p}$-estimate to the above system (see \cite{Temam}) ensures that for any $p\in(1,\infty)$
\begin{align}\label{eq4.7}
\|\nabla^{2} u\|_{L^{p}}+\|\nabla P\|_{L^{p}}\leq C\|\varrho \dot{u}\|_{L^{p}}+C\||\nabla d||\Delta d|\|_{L^{p}}
\leq C\|\varrho^{\frac{1}{2}} \dot{u}\|_{L^{p}}+C\||\nabla d||\nabla^{2} d|\|_{L^{p}},
\end{align}
where we have used the identity $\operatorname{div}(\nabla d\cdot\nabla d)=\nabla d\cdot\Delta d$ and \eqref{eq3.6}. Combining \eqref{eq4.6} and \eqref{eq4.7} together, it follows that
\begin{align}\label{eq4.08}
\frac{d}{dt} B(t)\! +\!\|\varrho^{\frac{1}{2}}\dot{u}\|_{L^{2}}^{2}
\leq&\frac{\varepsilon}{2}\|\nabla^{3} d\|_{L^{2}}^{2}+C\|\nabla^{2} d\|_{L^{2}}^{4} +
C\|\nabla u\|_{L^{2}}^{4} +\frac{\varepsilon}{2} (\|\varrho^{\frac{1}{2}} \dot{u}\|_{L^{2}}^{2}+\||\nabla d||\nabla^{2}d|\|_{L^{2}}^{2})\nonumber\\
\leq&\frac{\varepsilon}{4}\|\nabla^{3} d\|_{L^{2}}^{2}+C\|\nabla^{2} d\|_{L^{2}}^{4} +
C\|\nabla u\|_{L^{2}}^{4} +\frac{\varepsilon}{2} (\|\varrho^{\frac{1}{2}} \dot{u}\|_{L^{2}}^{2}+\|\nabla d\|_{L^{2}} \|\nabla^{2}d\|_{L^{2}}^{2}\|\nabla^{3}d|\|_{L^{2}})\nonumber\\
\leq&  \varepsilon \|\nabla^{3} d\|_{L^{2}}^{2}+\varepsilon \|\varrho^{\frac{1}{2}} \dot{u}\|_{L^{2}}^{2}
+C\|\nabla^{2} d\|_{L^{2}}^{4} +
C\|\nabla u\|_{L^{2}}^{4},
\end{align}
where
\begin{align*}
B(t)=\frac{1}{2} \|\nabla u\|_{L^{2}}^{2}-\int (\nabla d\cdot\nabla d)\cdot\nabla u\text{d}x,
\end{align*}
satisfies
\begin{align*}
\frac{1}{4} \|\nabla u\|_{L^{2}}^{2} -C_{1} \|\nabla^{2} d\|_{L^{2}}^{2}\leq B(t)\leq C\|\nabla u\|_{L^{2}}^{2}+C\|\nabla^{2} d\|_{L^{2}}^{2}
\end{align*}
owing to Gagliardo-Nirenberg inequality, \eqref{eq5.4} and the following estimate
\begin{align*}
\int (\nabla d\cdot\nabla d)\cdot\nabla u\text{d}x\leq &\frac{1}{4} \|\nabla u\|_{L^{2}}^{2}+C\|\nabla d\|_{L^{4}}^{4} \leq \frac{1}{4} \|\nabla u\|_{L^{2}}^{2}+C\|\nabla d\|_{L^{2}}^{2} \|\nabla^{2}d\|_{L^{2}}^{2}\nonumber\\
\leq &\frac{1}{4} \|\nabla u\|_{L^{2}}^{2}+C \|\nabla^{2}d\|_{L^{2}}^{2}. \quad (\text{by } \eqref{eq5.4})
\end{align*}

Now, multiplying $\eqref{eq1.1}_{3}$ by $-\nabla\Delta d$ and then integrating by parts over $\mathbb{R}^{2}$, it follows from H\"{o}lder's and Gagliardo-Nirenberg inequalities, \eqref{eq5.4} and \eqref{eq4.7} that
\begin{align}\label{eq4.09}
\frac{1}{2}\frac{d}{dt}\|\nabla^{2} d\|_{L^{2}}^{2}&+\|\nabla^{3}d\|_{L^{2}}^{2}=\int \nabla(u\cdot\nabla d)\cdot\nabla\Delta d\text{d}x
-\int \nabla (|\nabla d|^{2} d)\nabla\Delta d\text{d}x\nonumber\\
=&\int (\nabla u\cdot\nabla d)\cdot \nabla\Delta d\text{d}x+\!\sum_{i,j,k=1}^{2}\!\int u_{i}\partial_{i}\partial_{j} d\cdot\partial_{i}\partial_{k}^{2}d\text{d}x- \int \nabla (|\nabla d|^{2} d)\nabla\Delta d\text{d}x\nonumber\\
=&\int (\nabla u\cdot\nabla d)\cdot \nabla\Delta d\text{d}x-\!\sum_{i,j,k=1}^{2}\!\int \partial_{k} u_{i} \partial_{i}\partial_{j} d\cdot
\partial_{i} \partial_{k} d \text{d}x-\int \nabla(|\nabla d|^{2}d)\cdot\nabla\Delta d\text{d}x\nonumber\\
\leq& C\!\left(\|\nabla^{3} d\|_{\!L^{2}} \|\nabla u\|_{\!L^{3}} \|\nabla d\|_{\!L^{6}}\! +\|\nabla u\|_{\!L^{3}}\|\nabla^{2} d\|_{\!L^{3}}^{2}\!
+
\|\nabla^{3} d\|_{\!L^{2}} \|\nabla d\|_{\!L^{6}}^{3}\! +\|\nabla^{3} d\|_{\!L^{2}}\|\nabla^{2}d\|_{\!L^{3}} \|\nabla d\|_{\!L^{6}}\right)\nonumber\\
\leq&\frac{\varepsilon}{4} \|\nabla^{3} d\|_{L^{2}}^{2} +C\|\nabla u\|_{L^{3}}^{3}+C\|\nabla d\|_{L^{6}}^{6}+C\|\nabla^{2}d\|_{L^{3}}^{3}+C\|\nabla^{2} d\|_{L^{3}}^{2}\|\nabla d\|_{L^{6}}^{2}\nonumber\\
\leq&\frac{\varepsilon}{2} \|\nabla^{3} d\|_{L^{2}}^{2} +C\|\nabla u\|_{L^{3}}^{3}+C\|\nabla d\|_{L^{6}}^{6}+C\|\nabla^{2} d\|_{L^{2}}^{4}\nonumber\\
\leq&\frac{\varepsilon}{2} \|\nabla^{3} d\|_{L^{2}}^{2} +C\|\nabla u\|_{L^{2}}^{2}\|\nabla^{2} u\|_{L^{2}}
+C \|\nabla d\|_{L^{2}}^{2}\|\nabla^{2}d \|_{L^{2}}^{4}+C\|\nabla^{2} d\|_{L^{2}}^{4}\nonumber\\
\leq& \frac{\varepsilon}{2} \|\nabla^{3} d\|_{L^{2}}^{2}+\frac{\varepsilon}{2} (\|\varrho^{\frac{1}{2}} \dot{u}\|_{L^{2}}^{2}+\||\nabla d||\nabla^{2}d|\|_{L^{2}}^{2})+C\|\nabla^{2} d\|_{L^{2}}^{4} +
C\|\nabla u\|_{L^{2}}^{4}\nonumber\\
\leq &  \varepsilon\|\nabla^{3} d\|_{L^{2}}^{2}+\varepsilon \|\varrho^{\frac{1}{2}} \dot{u}\|_{L^{2}}^{2}
+C\|\nabla^{2} d\|_{L^{2}}^{4} +
C\|\nabla u\|_{L^{2}}^{4}.
\end{align}
Multiplying \eqref{eq4.09} by $(C_{1}+1)$, then adding the resulting inequality with \eqref{eq4.08} and choosing $\varepsilon$ suitably small, we obtain
\begin{align}\label{eq4.10}
&\frac{d}{dt}\left(B(t)+(C_{1}+1)\|\nabla^{2} d\|_{L^{2}}^{2}\right)+\|\varrho^{\frac{1}{2}} \dot{u}\|_{L^{2}}^{2}+\|\nabla^{3}d\|_{L^{2}}^{2}\nonumber\\
\leq& C(\|\nabla^{2} d\|_{L^{2}}^{4} +
\|\nabla u\|_{L^{2}}^{4})\leq C(\|\nabla^{2} d\|_{L^{2}}^{2} +
\|\nabla u\|_{L^{2}}^{2})(B(t)+(C_{1}+1)\|\nabla^{2} d\|_{L^{2}}^{2}).
\end{align}
The above estimate \eqref{eq4.10} combined with energy inequality \eqref{eq5.4}, the definition of $B(t)$, and Gronwall's inequality ensures
\begin{align*}
&\sup_{0\leq t\leq T}\left(\|\nabla u\|_{L^{2}}^{2} +\|\nabla^{2} d\|_{L^{2}}^{2}\right)
+\int_{0}^{T} \left( \|\varrho^{\frac{1}{2}} \dot{u}\|_{L^{2}}^{2} +\|\nabla^{3} d\|_{L^{2}}^{2}\right)\text{d}t\leq C.
\end{align*}
This together with \eqref{eq3.6} ensures \eqref{eq5.2}.

Finally,  multiplying \eqref{eq4.10} by $t$, and then applying Gronwall's inequality to the resulting inequality, it follows from \eqref{eq5.4} and the definition of $B(t)$ yields \eqref{eq5.3}. This completes the proof of Lemma \ref{lem4.2}.
\end{proof}

\begin{lemma}\label{lem4.3}
There exists a positive constant $C$  depending only on  $\varepsilon_{0}$, $\|\varrho_{0}\|_{L^{1}\cap L^{\infty}}$, $\|\varrho^{\frac{1}{2}}_{0} u_{0}\|_{L^{2}}$,
$\|\nabla u_{0}\|_{L^{2}}$, $\|\nabla d_{0}\|_{L^{2}}$ and $\|\nabla^{2} d_{0}\|_{L^{2}}$ such that for $i=1,2$
\begin{align}\label{eq4.11}
&\sup_{0\leq t\leq T}t^{i}\left(\|\varrho^{\frac{1}{2}} \dot{u}\|_{L^{2}}^{2} +\||\nabla d||\nabla^{2} d|\|_{L^{2}}^{2}\right)
+\int_{0}^{T} \left( \|\nabla \dot{u}\|_{L^{2}}^{2} +\||\nabla d||\Delta\nabla d| \|_{L^{2}}^{2}\right)\text{d}t\leq C,
\end{align}
and
\begin{align}\label{eq4.12}
&\sup_{0\leq t\leq T} t^{i}\left( \|\nabla^{2} u\|_{L^{2}}^{2} +\|\nabla P\|_{L^{2}}^{2}\right)
\leq C.
\end{align}
\end{lemma}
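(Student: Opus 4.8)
The plan is to establish the time-weighted bound \eqref{eq4.11} for the material acceleration $\dot u$ together with the weighted director quantities, and then to recover the second-order bound \eqref{eq4.12} directly from the Stokes estimate \eqref{eq4.7}. First I would rewrite the momentum equation as $\varrho\dot u = \Delta u - \nabla P - \operatorname{div}(\nabla d\odot\nabla d)$ and apply the material operator $\partial_t + \operatorname{div}(u\,\cdot)$ to it; using $\operatorname{div}u=0$ this yields a transport--diffusion equation for $\dot u$ whose right-hand side consists of the usual viscous and pressure commutators of \cite{LSZ,LXZ1} together with the two director-coupling terms $-\big(\operatorname{div}(\nabla d\odot\nabla d)\big)_t$ and $-(u\cdot\nabla)\operatorname{div}(\nabla d\odot\nabla d)$. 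Multiplying by $\dot u$ and integrating over $\mathbb{R}^2$ produces the left-hand side $\tfrac12\tfrac{d}{dt}\|\varrho^{1/2}\dot u\|_{L^2}^2 + \|\nabla\dot u\|_{L^2}^2$, the pressure contribution being absorbed after integration by parts via the identity $\operatorname{div}\dot u = \partial_i u_j\,\partial_j u_i$ and Gagliardo--Nirenberg. The purely hydrodynamic commutators are bounded, as in Lemma \ref{lem4.2}, by $\varepsilon\|\nabla\dot u\|_{L^2}^2$ plus $C$ times powers of $\|\nabla u\|_{L^2}$ and $\|\nabla^2 u\|_{L^2}$, after which \eqref{eq4.7} is used to trade $\|\nabla^2 u\|_{L^2}$ for $\|\varrho^{1/2}\dot u\|_{L^2}+\||\nabla d||\nabla^2 d|\|_{L^2}$.

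Next I would treat the two director-coupling integrals. Substituting $\nabla d_t = \Delta\nabla d - \nabla(u\cdot\nabla d) + \nabla(|\nabla d|^2 d)$ from \eqref{eq3.8} and integrating by parts reduces them to integrals dominated by $\int |\nabla\dot u|\,|\nabla d|\,|\Delta\nabla d|\,\text{d}x$ plus lower-order pieces, hence bounded by $\varepsilon\|\nabla\dot u\|_{L^2}^2 + C\||\nabla d||\Delta\nabla d|\|_{L^2}^2 + \cdots$. This is precisely why an independent control of $\||\nabla d||\Delta\nabla d|\|_{L^2}$ is indispensable. To obtain it I would exploit the 2D harmonic-map-heat-flow structure by working at the level of $|\nabla d|^2$: multiplying \eqref{eq3.8} by $\nabla d\,\Delta|\nabla d|^2$ and integrating by parts gives a differential inequality of the schematic form $\tfrac{d}{dt}\||\nabla d||\nabla^2 d|\|_{L^2}^2 + \||\nabla d||\Delta\nabla d|\|_{L^2}^2 \le C(\cdots)$, the announced \eqref{eq4.15}, whose right-hand side is controlled by the quantities already bounded in Lemma \ref{lem4.2} together with $\|\nabla u\|$-factors handled through Gagliardo--Nirenberg and the coercivity \eqref{eq5.1} of Theorem \ref{thm5.1}.

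I would then add the two differential inequalities, multiply by $t^i$ for $i=1,2$, and integrate in time. The terms $t^{i-1}\big(\|\varrho^{1/2}\dot u\|_{L^2}^2 + \||\nabla d||\nabla^2 d|\|_{L^2}^2\big)$ arising from $\tfrac{d}{dt}t^i = i\,t^{i-1}$ are integrable by the lower-order bounds \eqref{eq5.2}--\eqref{eq5.3} of Lemma \ref{lem4.2}, and the remaining right-hand side is absorbed by a Gronwall argument based on $\sup_t(\|\nabla u\|_{L^2}^2+\|\nabla^2 d\|_{L^2}^2)\le C$ and $\int_0^T(\|\varrho^{1/2}\dot u\|_{L^2}^2+\|\nabla^3 d\|_{L^2}^2)\,\text{d}t\le C$; this yields \eqref{eq4.11}. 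Finally, \eqref{eq4.12} follows at once by applying \eqref{eq4.7} with $p=2$, namely $\|\nabla^2 u\|_{L^2}+\|\nabla P\|_{L^2}\le C\|\varrho^{1/2}\dot u\|_{L^2}+C\||\nabla d||\nabla^2 d|\|_{L^2}$, and multiplying by $t^i$.

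The main obstacle will be the director sector: because the strong nonlinearities $u\cdot\nabla d$ and $|\nabla d|^2 d$ prevent a direct estimate of $\nabla d_t$, one cannot bound $\||\nabla d||\Delta\nabla d|\|_{L^2}$ head-on and must instead pass to the geometrically natural variable $|\nabla d|^2$ and invoke the rigidity/coercivity \eqref{eq5.1} to close the loop. The accompanying difficulty is the bookkeeping of the small parameters $\varepsilon$ across the $\dot u$-estimate, the Stokes estimate \eqref{eq4.7}, and the $|\nabla d|^2$-estimate, so that all top-order terms $\|\nabla\dot u\|_{L^2}^2$, $\|\nabla^3 d\|_{L^2}^2$ and $\||\nabla d||\Delta\nabla d|\|_{L^2}^2$ are genuinely absorbed into the left-hand side before Gronwall is applied.
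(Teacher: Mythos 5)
Your proposal follows essentially the same route as the paper: the material-derivative equation for $\dot u$ with the pressure handled through $\operatorname{div}\dot u=\partial_i u_j\partial_j u_i$ (and the Hardy--BMO duality of Lemma \ref{lem2.6}), the $|\nabla d|^2$-level estimate \eqref{eq4.15} to control $\||\nabla d||\Delta\nabla d|\|_{L^2}$, a combined Gronwall argument with $t^i$ weights resting on Lemma \ref{lem4.2}, and finally \eqref{eq4.7} for \eqref{eq4.12}. The only detail you gloss over is that the paper must polarize over the four directional derivatives $\widetilde\nabla d(1,0),(0,1),(1,1),(1,-1)$ so that the quantities $\|\nabla|\widetilde\nabla d|^2\|_{L^2}^2$ and $\|\Delta|\widetilde\nabla d|^2\|_{L^2}^2$ actually dominate $\||\nabla d||\nabla^2 d|\|_{L^2}^2$ and $\||\nabla d||\Delta\nabla d|\|_{L^2}^2$, but this is a technical refinement of the same idea.
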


\begin{proof}
Operating $\partial_{t}+u\cdot\nabla $ to $\eqref{eq1.1}_{2j}$ ($j=1,2$), one obtains by some simple calculations that
\begin{align*}
&\partial_{t}(\varrho \dot{u}_{j})+\operatorname{div}(\varrho u \dot{u}_{j}) -\Delta \dot{u}_{j}\nonumber\\
=&\sum_{i=1}^{2}\Big(-\partial_{i} (\partial_{i} u\cdot\nabla u_{j})-\operatorname{div} (\partial_{i} u\partial_{i}u_{j})  -\partial_{t}\partial_{i}(\partial_{i}d \cdot\partial_{j} d) -
u\cdot\nabla \partial_{i}(\partial_{i}d \cdot \partial_{j} d)\Big)-\partial_{t}\partial_{j} P -u\cdot\nabla\partial_{j} P.
\end{align*}
Multiplying the above equality by $\dot{u}_{j}$, and then integrating by parts over $\mathbb{R}^{2}$, it follows that
\begin{align}\label{eq4.13}
\frac{1}{2}\frac{d}{dt} \|\varrho^{\frac{1}{2}}\dot{u}\|_{L^{2}}^{2}&+ \|\nabla \dot{u}\|_{L^{2}}^{2}
=-\sum_{i,j=1}^{2}\int \left[\partial_{i}(\partial_{i} u\cdot\nabla u_{j})+\operatorname{div} (\partial_{i} u\partial_{i} u_{j})\right]\dot{u}_{j}\text{d}x\nonumber\\
&-\!\sum_{j=1}^{2}\!\int\! (\partial_{t}\partial_{j} P\!+u\cdot\nabla\partial_{j} P)\dot{u}_{j}\text{d}x-\!\sum_{i,j=1}^{2}\!\!
\left(\int \!\partial_{t}\partial_{i}(\partial_{i}d\cdot\partial_{j}d)\dot{u}_{j}\text{d}x
\!+\!\int\! u\cdot\nabla\partial_{i}(\partial_{i}d\cdot\partial_{j}d) \dot{u}_{j}\text{d}x\right)\nonumber\\
=& \overline{J}_{1}+\overline{J}_{2}+\overline{J}_{3}.
\end{align}
In what follows, we shall estimate each term on the right-hand side of \eqref{eq4.13} term by term. We first notice that exactly the same arguments as Lemma 3.3 of paper \cite{LSZ}, one has
\begin{align*}
\overline{J}_{1}+\overline{J}_{2}\leq \frac{d}{dt} \int \sum_{i,j=1}^{2} P\partial_{j} u_{i}\partial_{i}u_{j}\text{d}x +C(\|P\|_{L^{4}}^{4}+\|\nabla u\|_{L^{4}}^{4}) +\frac{1}{4} \|\nabla \dot{u}\|_{L^{2}}^{2}.
\end{align*}
For the term $\overline{J}_{3}$, by using $\eqref{eq1.1}_{3}$, $\eqref{eq1.1}_{4}$ and Gagliardo-Nirenberg inequality, it follows that
\begin{align*}
\overline{J}_{3}=& \sum_{i,j=1}^{2}\int \partial_{i}\dot{u}_{j} \partial_{i} d_{t}\cdot\partial_{j}d\text{d}x +\sum_{i,j=1}^{2}\int \partial_{i}\dot{u}_{j}\partial_{i}d\cdot\partial_{j}d_{t}\text{d}x-\sum_{i,j,k=1}^{2}\int u_{k}\partial_{k}\partial_{i}(\partial_{i}d\cdot\partial_{j}d)\dot{u}_{j}\text{d}x\nonumber\\
\!=&\!\sum_{i,j=1}^{2}\!\int\! \partial_{i}\dot{u}_{j}  \partial_{j}d \cdot\!\partial_{i} (\Delta d\!-\!u\cdot\! \nabla d\!+\!|\nabla d|^{2}d)\text{d}x\! +\!\sum_{i,j=1}^{2}\!\int\! \partial_{i}\dot{u}_{j}\partial_{i}d\cdot\!\partial_{j}(\Delta d\!-\!u\cdot\! \nabla d\!+\!|\nabla d|^{2}d)\text{d}x\nonumber\\
&-\sum_{i,j,k,\ell=1}^{2}\int u_{k}\partial_{k}\partial_{i}(\partial_{i}d\cdot\partial_{j}d)\dot{u}_{j}\text{d}x\nonumber\\
=&\sum_{i,j=1}^{2}\!\int\! \partial_{i}\dot{u}_{j}  \partial_{j}d\cdot  (\Delta \partial_{i} d-\partial_{i}u\cdot \nabla d+\partial_{i}(|\nabla d|^{2}d))\text{d}x-\sum_{i,j,k,\ell=1}^{2}\int \partial_{i}\dot{u}_{j} \partial_{j}d\cdot \partial_{k}\partial_{i}d u_{k}\text{d}x\nonumber\\
&+\!\sum_{i,j=1}^{2}\!\int\! \partial_{i}\dot{u}_{j}\partial_{i}d\cdot(\Delta\partial_{j} d-\partial_{j} u\cdot \nabla d+\partial_{j}(|\nabla d|^{2}d))\text{d}x-\sum_{i,j,k=1}^{2}\int \partial_{i}\dot{u}_{j} \partial_{i}d\cdot \partial_{k}\partial_{j}d u_{k}\text{d}x\nonumber\\
&+\sum_{i,j,k=1}^{2}\int \partial_{i} u_{k}\partial_{k}(\partial_{i}d\cdot\partial_{j}d)\dot{u}_{j}\text{d}x
+\sum_{i,j,k=1}^{2}\int u_{k}\partial_{k}(\partial_{i}d\cdot\partial_{j}d)\partial_{i} \dot{u}_{j}\text{d}x\nonumber\\
=&\sum_{i,j=1}^{2}\!\int\! \partial_{i}\dot{u}_{j}  \partial_{j}d \cdot (\Delta \partial_{i} d-\partial_{i}u\cdot \nabla d+\partial_{i}(|\nabla d|^{2}d))\text{d}x\nonumber\\
&+\!\sum_{i,j=1}^{2}\!\int\! \partial_{i}\dot{u}_{j}\partial_{i}d\cdot(\Delta\partial_{j} d-\partial_{j} u\cdot \nabla d+\partial_{j}(|\nabla d|^{2}d))\text{d}x +\sum_{i,j,k,\ell=1}^{2}\int \partial_{i} u_{k}\partial_{i}d\cdot\partial_{j}d \partial_{k}\dot{u}_{j}\text{d}x\nonumber\\
\leq& C\|\nabla\dot{u}\|_{L^{2}} \||\nabla d||\Delta \nabla d|\|_{L^{2}} +C\|\nabla \dot{u}\|_{L^{2}} \|\nabla u\|_{L^{4}} \||\nabla d|^{2}\|_{L^{4}}\nonumber\\
\leq &\frac{1}{4} \|\nabla \dot{u}\|_{L^{2}}^{2}+C\||\nabla d||\Delta \nabla d|\|_{L^{2}}^{2} +C\|\nabla u\|_{L^{4}}^{4}+C \||\nabla d|^{2}\|_{L^{4}}^{4}.
\end{align*}
Inserting the estimates of $\overline{J}_{i} (i=1,2,3)$ into \eqref{eq4.13}, one obtains
\begin{align}\label{eq4.14}
&\frac{1}{2}\frac{d}{dt} \|\varrho^{\frac{1}{2}}\dot{u}\|_{L^{2}}^{2}+\frac{1}{2} \|\nabla \dot{u}\|_{L^{2}}^{2}\nonumber\\
\leq& \frac{d}{dt} \int \sum_{i,j=1}^{2} P\partial_{j} u_{i}\partial_{i}u_{j}\text{d}x
+C(\|P\|_{L^{4}}^{4}+\|\nabla u\|_{L^{4}}^{4}+\||\nabla d|^{2}\|_{L^{4}}^{4})+C\||\nabla d||\Delta \nabla d|\|_{L^{2}}^{2}
\end{align}

Now, inspired by the papers \cite{LH15,LXZ,LXZ1}, for $a_{1},a_{2}\in \{-1,0,1\}$, let us denote
\begin{align*}
(\widetilde{\nabla} d)(a_{1},a_{2})=a_{1}\partial_{1}d+a_{2}\partial_{2}d, \quad (\widetilde{\nabla} u)(a_{1},a_{2})=a_{1}\partial_{1} u+a_{2}\partial_{2} u,\quad \widetilde{u}(a_{1},a_{2})=a_{1}u_{1}+a_{2} u_{2},
\end{align*}
then it is easy to deduce from \eqref{eq3.8} that
\begin{align*}
\widetilde{\nabla }d_{t} -\Delta\widetilde{\nabla } d=-\widetilde{\nabla}u\cdot\nabla d -u\cdot\nabla\widetilde{\nabla}d +|\nabla d|^{2}\widetilde{\nabla}d+ 2(\nabla d\cdot d)\nabla\widetilde{\nabla}d.
\end{align*}
Multiplying the above equality by $4\widetilde{\nabla}d\Delta|\widetilde{\nabla} d|^{2}$, and then integrating by parts over $\mathbb{R}^{2}$, it follows that
\begin{align}\label{eq4.15}
&\frac{d}{dt} \|\nabla |\widetilde{\nabla }d|^{2}\|_{L^{2}}^{2}+2\|\Delta |\widetilde{\nabla}d|^{2}\|_{L^{2}}^{2}\nonumber\\
=& -4\int (\widetilde{\nabla} u\cdot\nabla d)\cdot\widetilde{\nabla} d\Delta|\widetilde{\nabla}d|^{2}\text{d}x
-4\int (u\cdot\nabla \widetilde{\nabla}d)\cdot\widetilde{\nabla}d\Delta|\widetilde{\nabla }d|^{2}\text{d}x\nonumber\\
&+4\int |\nabla d|^{2}|\widetilde{\nabla}d|^{2} \Delta|\widetilde{\nabla}d|^{2}\text{d}x+8\int ( \nabla d\cdot d)\nabla\widetilde{\nabla} d\cdot\widetilde{\nabla}d\Delta|\widetilde{\nabla}d|^{2}\text{d}x\nonumber\\
\leq& C\int |\widetilde{\nabla} u| |\nabla d||\widetilde{\nabla}d||\Delta|\widetilde{\nabla}d|^{2}|\text{d}x
-2\int (u\cdot\nabla |\widetilde{\nabla }d|^{2})\Delta|\widetilde{\nabla}d|^{2}\text{d}x\nonumber\\
&+C\int |\nabla d|^{2}|\widetilde{\nabla}d|^{2} |\Delta|\widetilde{\nabla}d|^{2}|\text{d}x+C\int |\nabla d| \nabla|\widetilde{\nabla} d|^{2}|\Delta|\widetilde{\nabla}d|^{2}|\text{d}x\nonumber\\
\leq &C\|\nabla u\|_{L^{4}}\||\nabla d|^{2}\|_{L^{4}} \|\Delta|\widetilde{\nabla }d|^{2}\|_{L^{2}}
+C\|\nabla u\|_{L^{4}} \|\nabla |\widetilde{\nabla}d|^{2}\|_{L^{\frac{8}{3}}}^{2}\nonumber\\
&+C\||\nabla d|^{2}\|_{L^{4}}^{2}
\|\Delta|\widetilde{\nabla} d|^{2}\|_{L^{2}} +C\|\Delta|\widetilde{\nabla }d|^{2}\|_{L^{2}} \|\nabla|\widetilde{\nabla}d|^{2}\|_{L^{\frac{8}{3}}} \|\nabla d\|_{L^{8}}\nonumber\\
\leq& \frac{1}{2}\|\Delta|\widetilde{\nabla}d|^{2}\|_{L^{2}}^{2}+C(\||\nabla d|^{2}\|_{L^{4}}^{4}+\|\nabla u\|_{L^{4}}^{4})+C\|\nabla |\widetilde{\nabla} d|^{2}\|_{L^{\frac{8}{3}}}^{\frac{8}{3}} +C\|\nabla d\|_{L^{8}}^{8}\nonumber\\
\leq& \frac{1}{2}\|\Delta|\widetilde{\nabla}d|^{2}\|_{L^{2}}^{2}+C(\||\nabla d|^{2}\|_{L^{4}}^{4}+\|\nabla u\|_{L^{4}}^{4})+C\||\widetilde{\nabla} d|^{2}\|_{L^{4}}^{\frac{4}{3}}\|\Delta|\widetilde{\nabla} d|^{2}\|_{L^{2}}^{\frac{4}{3}} +C\|\nabla d\|_{L^{2}}^{4}\|\nabla^{2} d\|_{L^{4}}^{4}\nonumber\\
\leq& \|\Delta|\widetilde{\nabla}d|^{2}\|_{L^{2}}^{2}+C(\||\nabla d|^{2}\|_{L^{4}}^{4}+\|\nabla u\|_{L^{4}}^{4})+C\|\nabla^{2} d\|_{L^{4}}^{4},
\end{align}
where we have used $\eqref{eq1.1}_{4}$ H\"{o}lder's and  Gagliardo-Nirenberg inequalities, and \eqref{eq5.4} in the above estimates. Noticing that
\begin{align*}
\||\Delta\nabla d||\nabla d|\|_{L^{2}}^{2}\!\leq \! C\|\nabla^{2}d\|_{L^{4}}^{4}\!+\! \|\Delta|\widetilde{\nabla }d(1,0)|^{2}\|_{L^{2}}^{2}\!+\!\|\Delta|\widetilde{\nabla }d(0,1)|^{2}\|_{L^{2}}^{2}\!+\!\|\Delta|\widetilde{\nabla }d(1,1)|^{2}\|_{L^{2}}^{2}\!+\!\|\Delta|\widetilde{\nabla }d(1,-1)|^{2}\|_{L^{2}}^{2},
\end{align*}
and
\begin{align}\label{eq4.16}
\||\nabla^{2} d||\nabla d|\|_{L^{2}}^{2}\leq G(t)\leq C\||\nabla^{2} d||\nabla d|\|_{L^{2}}^{2}
\end{align}
with
\begin{align*}
G(t)\triangleq \|\nabla|\widetilde{\nabla}d(1,0)|^{2}\|_{L^{2}}^{2}+\|\nabla|\widetilde{\nabla}d(0,1)|^{2}\|_{L^{2}}^{2}
+\|\nabla|\widetilde{\nabla}d(1,1)|^{2}\|_{L^{2}}^{2}+\|\nabla|\widetilde{\nabla}d(1,-1)|^{2}\|_{L^{2}}^{2}.
\end{align*}
Thus, it follows from \eqref{eq4.15} multiplied by $(C_{2}+1)$ that
\begin{align*}
\frac{d}{dt} \left((C_{2}+1)G(t)\right) +(C_{2}+1)\||\Delta\nabla d||\nabla d|\|_{L^{2}}^{2}
\leq  C\|\nabla u\|_{L^{4}}^{4} +C\|\nabla^{2} d\|_{L^{4}}^{4}+C\||\nabla d|^{2}\|_{L^{4}}^{4},
\end{align*}
which combined with \eqref{eq4.14} ensures that
\begin{align}\label{eq4.17}
\frac{d}{dt}F(t)+\frac{1}{2}\|\nabla \dot{u}\|_{L^{2}}^{2}+\||\Delta\nabla d||\nabla d|\|_{L^{2}}^{2}
\leq C\|P\|_{L^{4}}^{4}+C\|\nabla u\|_{L^{4}}^{4} +C\|\nabla^{2} d\|_{L^{4}}^{4}+C\||\nabla d|^{2}\|_{L^{4}}^{4},
\end{align}
where
\begin{align*}
F(t)\triangleq \frac{1}{2}\|\varrho^{\frac{1}{2}}\dot{u}\|_{L^{2}}^{2}+(C_{2}+1) G(t)-\int \sum_{i,j=1}^{2} P\partial_{j}u_{i}\partial_{i}u_{j}\text{d}x
\end{align*}
satisfies
\begin{align}\label{eq4.18}
\frac{1}{4}\|\varrho^{\frac{1}{2}}\dot{u}\|_{L^{2}}^{2}+\frac{C_{2}+1}{2} G(t)-C\|\nabla u\|_{L^{4}}^{4}
\leq F(t)
\leq \|\varrho^{\frac{1}{2}}\dot{u}\|_{L^{2}}^{2}+C G(t)+C\|\nabla u\|_{L^{4}}^{4}
\end{align}
owing to the following estimate
\begin{align*}
\left|\int \sum_{i,j=1}^{2} P\partial_{j}u_{i}\partial_{i}u_{j}\text{d}x\right|
\leq & \sum_{i=1}^{2} C\|P\|_{BMO} \|\partial_{i}u\cdot\nabla u_{i}\|_{\mathcal{H}^{1}}\leq C\|\nabla P\|_{L^{2}}\|\nabla u\|_{L^{2}}^{2}\quad (\text{by Lemma \ref{lem2.6}})\nonumber\\
\leq &C(\|\varrho^{\frac{1}{2}}\dot{u}\|_{L^{2}} +\||\nabla^{2}d||\nabla d|\|_{L^{2}})\|\nabla u\|_{L^{2}}^{2}
\quad (\text{by} \eqref{eq4.7})\nonumber\\
\leq& \frac{1}{2}\|\varrho^{\frac{1}{2}}\dot{u}\|_{L^{2}}^{2} +\frac{C_{2}+1}{2} G(t)+C\|\nabla u\|_{L^{4}}^{4}.
\end{align*}

Next, we shall estimate the terms on the right-hand side of \eqref{eq4.17}. To bound the terms $\|P\|_{L^{4}}$ and $\|\nabla u\|_{L^{4}}$, it follows from Sobolev embedding, \eqref{eq4.7}, H\"{o}lder's inequality, \eqref{eq5.4}, \eqref{eq4.7}  and \eqref{eq4.18}  that
\begin{align}\label{eq4.19}
\|P\|_{L^{4}}^{4} +\|\nabla u\|_{L^{4}}^{4}\leq &C(\|\nabla P\|_{L^{\frac{4}{3}}}^{4}+\|\nabla^{2} u\|_{L^{\frac{4}{3}}}^{4})\leq C(\|\varrho\dot{u}\|_{L^{\frac{4}{3}}}^{4}+\||\nabla d||\nabla^{2} d|\|_{L^{\frac{4}{3}}}^{4})\nonumber\\
\leq&C\|\varrho\|_{L^{2}}^{2}\|\varrho^{\frac{1}{2}} \dot{u}\|_{L^{2}}^{4}+C\|\nabla d\|_{L^{2}}^{4}\|\nabla^{2} d\|_{L^{4}}^{4}\nonumber\\
\leq& C \|\varrho^{\frac{1}{2}} \dot{u}\|_{L^{2}}^{2}(F(t)+\|\nabla u\|_{L^{2}}^{4})
+C\|\nabla^{2} d\|_{L^{2}}^{2}\|\nabla^{3} d\|_{L^{2}}^{2}.
\end{align}
For the rest two terms $\|\nabla^{2} d\|_{L^{4}}$ and $\||\nabla d|^{2}\|_{L^{4}}$, by using Gagliardo-Nirenberg inequality, \eqref{eq5.4} and \eqref{eq4.18}, one has
\begin{align}\label{eq4.20}
\|\nabla^{2} d\|_{L^{4}}^{4}+\||\nabla d|^{2}\|_{L^{4}}^{4}\leq& C\|\nabla^{2}d\|_{L^{2}}^{2}\|\nabla^{3} d\|_{L^{2}}^{2}+C\|\nabla d\|_{L^{2}}^{2} \|\nabla^{2} d\|_{L^{2}}^{2} \||\nabla d||\nabla^{2}d|\|_{L^{2}}^{2}\nonumber\\
\leq& C\|\nabla^{2} d\|_{L^{2}}^{2}(F(t)+\|\nabla u\|_{L^{4}}^{4})+C\|\nabla^{2}d\|_{L^{2}}^{2}\|\nabla^{3} d\|_{L^{2}}^{2}.
\end{align}
Hence, inserting \eqref{eq4.19} and \eqref{eq4.20} into \eqref{eq4.17}, one obtains
\begin{align}\label{eq4.21}
&\frac{d}{dt} F(t) +\frac{1}{2}\|\nabla \dot{u}\|_{L^{2}}^{2}+\||\Delta\nabla d||\nabla d|\|_{L^{2}}^{2}\nonumber\\
\leq & C(\|\varrho^{\frac{1}{2}} \dot{u}\|_{L^{2}}^{2}+\|\nabla^{2} d\|_{L^{2}}^{2})(F(t)+\|\nabla u\|_{L^{2}}^{4})
+C\|\nabla^{2} d\|_{L^{2}}^{2}\|\nabla^{3} d\|_{L^{2}}^{2}.
\end{align}
Multiplying \eqref{eq4.21} by $t^{i}\ (i=1,2)$, and then applying Gronwall's inequality, it follows from \eqref{eq4.16}, \eqref{eq4.18}, \eqref{eq5.2}, \eqref{eq5.3} and \eqref{eq5.4} that
\begin{align*}
&\sup_{0\leq t\leq T} (t^{i}F(t))+\int_{0}^{T} t^{i}(\|\nabla \dot{u}\|_{L^{2}}^{2}+\||\Delta\nabla d||\nabla d|\|_{L^{2}}^{2})\text{d}t\nonumber\\
\leq& C\int_{0}^{T} t^{i-1} F(t)\text{d}t+C\int_{0}^{T} t^{i} \|\nabla^{2} d\|_{L^{2}}^{2}\|\nabla^{3} d\|_{L^{2}}^{2}\text{d}t+C\int_{0}^{T} (\|\varrho^{\frac{1}{2}} \dot{u}\|_{L^{2}}^{2}+\|\nabla^{2} d\|_{L^{2}}^{2})t^{i} \|\nabla u\|_{L^{4}}^{4}\text{d}t\nonumber\\
\leq& C\int_{0}^{T} t^{i-1} (\|\varrho^{\frac{1}{2}} \dot{u}\|_{L^{2}}^{2}+\||\nabla^{2}d||\nabla d|\|_{L^{2}}^{2})\text{d}t+C\sup_{0\leq t\leq T}(t^{i-1}\|\nabla u\|_{L^{2}}^{2})\int_{0}^{T} \|\nabla u\|_{L^{2}}^{2}\text{d}t\nonumber\\
&+C\!\sup_{0\leq t\leq T} (t^{i-1}\|\nabla^{2} d\|_{L^{2}}^{2})\int_{0}^{T} t\|\nabla^{3}d\|_{L^{2}}^{2}\text{d}t
+C\!\sup_{0\leq t\leq T}(t^{i} \|\nabla u\|_{L^{4}}^{4})\int_{0}^{T} (\|\varrho^{\frac{1}{2}} \dot{u}\|_{L^{2}}^{2}+\|\nabla^{2} d\|_{L^{2}}^{2})\text{d}t\nonumber\\
\leq& C\int_{0}^{T} t^{i-1} (\|\varrho^{\frac{1}{2}} \dot{u}\|_{L^{2}}^{2}+ \|\nabla d\|_{L^{2}}\|\nabla^{2} d\|_{L^{2}}^{2} \|\nabla^{3}d\|_{L^{2}})\text{d}t+C\nonumber\\
\leq& C\int_{0}^{T} t^{i-1} (\|\varrho^{\frac{1}{2}} \dot{u}\|_{L^{2}}^{2}+ \|\nabla^{3}d\|_{L^{2}}^{2})\text{d}t+C\sup_{0\leq t\leq T} (t^{i-1}\|\nabla^{2}d \|_{L^{2}}^{2})\int_{0}^{T} \|\nabla^{2}d\|_{L^{2}}^{2}\text{d}t+C\nonumber\\
\leq& C.
\end{align*}
This together with \eqref{eq4.16}, \eqref{eq4.18}, \eqref{eq5.2}, \eqref{eq5.3} and \eqref{eq5.4} ensures \eqref{eq4.11}.

 Finally, it is easy to see that the estimate \eqref{eq4.11} combined with \eqref{eq4.7} implies \eqref{eq4.12}. This completes the proof of Lemma \ref{lem4.3}.
\end{proof}

\subsection{Higher order estimates}
In this subsection, we shall give some higher order norms estimates to the solution $(\varrho, u, P, d)$. To this end, we first list the following spatial weighted estimates on the density.

\begin{lemma}\label{lem4.4}
There exists a positive constant $C$ depending on $T$ such that
\begin{align}\label{eq4.22}
\sup_{0\leq t\leq T} \|\varrho\bar{x}^{a}\|_{L^{1}}\leq C(T).
\end{align}
\end{lemma}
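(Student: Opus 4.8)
The goal is to propagate the spatial weighted bound $\|\varrho\bar x^a\|_{L^1}\le C(T)$ from the initial datum to any time $t\in[0,T]$. The natural strategy is to test the continuity equation $\eqref{eq1.1}_1$ against the weight $\bar x^a$ and derive a Gronwall-type differential inequality for $\|\varrho\bar x^a\|_{L^1}$, exactly as was already done locally in \eqref{eq3.15} inside the proof of Lemma~\ref{lem3.3}. The key difference now is that the estimate must hold on the fixed time interval $[0,T]$ using the global lower-order bounds of Lemma~\ref{lem4.2}, rather than the short-time bounds of Section~3.

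First I would multiply $\eqref{eq1.1}_1$ by $\bar x^a$ and integrate by parts over $\mathbb R^2$, using $\operatorname{div}u=0$ to move the derivative onto the weight, obtaining
\begin{align*}
\frac{d}{dt}\|\varrho\bar x^a\|_{L^1}
=\int \varrho\, u\cdot\nabla(\bar x^a)\,\text{d}x
\le C\int \varrho\,|u|\,\bar x^{a-1}\ln^2(e+|x|^2)\,\text{d}x,
\end{align*}
since $|\nabla\bar x|\le C\bar x^{1/2}\ln(e+|x|^2)$ by the definition of $\bar x$. The next step is to split the integrand with H\"older's inequality so that the velocity appears paired with a weight of mild growth, namely
\begin{align*}
\frac{d}{dt}\|\varrho\bar x^a\|_{L^1}
\le C\big\|\varrho\bar x^{a-1+\frac{4}{4+a}}\big\|_{L^{\frac{a+4}{a+3}}}\,
\big\|u\bar x^{-\frac{4}{4+a}}\big\|_{L^{a+4}},
\end{align*}
in complete analogy with \eqref{eq3.15}. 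The factor $\|u\bar x^{-\frac{4}{4+a}}\|_{L^{a+4}}$ is then controlled by $\|\varrho^{1/2}u\|_{L^2}+\|\nabla u\|_{L^2}$ via Lemma~\ref{lem2.4} (with the weight exponent $\eta=\frac{4}{4+a}<1$ and $r=a+4$), while the density factor is handled by interpolating $L^\infty$ and $L^1$ bounds, yielding $\|\varrho\|_{L^\infty}^{1/(4+a)}\,\|\varrho\bar x^a\|_{L^1}^{(3+a)/(4+a)}$. Using \eqref{eq3.6} for $\|\varrho\|_{L^\infty}=\|\varrho_0\|_{L^\infty}$, this gives
\begin{align*}
\frac{d}{dt}\|\varrho\bar x^a\|_{L^1}
\le C\big(1+\|\varrho\bar x^a\|_{L^1}\big)\big(1+\|\nabla u\|_{L^2}^2\big).
\end{align*}

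Finally I would invoke Gronwall's inequality together with the global bound $\int_0^T\|\nabla u\|_{L^2}^2\,\text{d}t\le C$, which follows from the energy estimate \eqref{eq5.4} (or equivalently \eqref{eq5.2}) in Lemma~\ref{lem4.2}. This integrability is precisely what makes the argument global: the exponential factor $\exp\{C\int_0^T(1+\|\nabla u\|_{L^2}^2)\,\text{d}t\}$ is finite on $[0,T]$, delivering \eqref{eq4.22} with $C=C(T)$. The only genuine subtlety, and the step I would watch most carefully, is verifying that the weight exponents line up so that Lemma~\ref{lem2.4} is applicable: one must check that $\frac{4}{4+a}<1$ and that the dual H\"older exponents on the density factor keep it inside $L^1\cap L^\infty$ with an $L^1$-power strictly less than one, so that the resulting inequality is linear (up to the $\|\nabla u\|_{L^2}^2$ multiplier) in $\|\varrho\bar x^a\|_{L^1}$ and hence amenable to Gronwall. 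Since these are exactly the bookkeeping constraints already met in \eqref{eq3.15}, no new difficulty arises beyond replacing the short-time velocity bound with the global one.
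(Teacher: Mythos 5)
Your proposal is correct and follows essentially the same route as the paper, which simply refers back to the derivation of \eqref{eq3.15} and notes that the weighted estimate \eqref{eq3.14} persists on $\mathbb{R}^{2}$; the only genuinely global input is $\int_{0}^{T}\|\nabla u\|_{L^{2}}^{2}\,\text{d}t\leq C$ from the energy inequality \eqref{eq5.4}, exactly as you identify. One small correction: the bound on $\|u\bar{x}^{-\frac{4}{4+a}}\|_{L^{a+4}}$ comes from Lemma \ref{lem2.3} (i.e., the inequality \eqref{eq3.14}), not Lemma \ref{lem2.4}, and its application on $[0,T]$ requires the lower mass bound $\inf_{0\leq t\leq T}\int_{B_{N_{1}}}\varrho\,\text{d}x\geq \frac{1}{4}$ supplied by \eqref{eq1.9}.
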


\begin{proof}
Similarly to \eqref{eq3.15}, we can prove \eqref{eq4.22} with some suitable revisions, and we omit it. Moreover, we notice that \eqref{eq3.14} still holds in $\mathbb{R}^{2}$.
\end{proof}

\begin{lemma}\label{lem4.5}
There exists a positive constant $C$ depending on $T$ such that
\begin{align}\label{eq4.23}
\sup_{0\leq t\leq T}\! \|\varrho\|_{H^{1}\cap W^{1,q}}+&\!\int_{0}^{T} \!\!\left(\|\nabla^{2} u\|_{L^{2}}^{2}\!+\!\|\nabla P\|_{L^{2}}^{2}\!+\!\|\nabla^{2} u\|_{L^{q}}^{\frac{q+1}{q}}\!\!+\!\|\nabla P\|_{L^{q}}^{\frac{q+1}{q}}\!\!+t(\|\nabla^{2} u\|_{L^{2}\cap L^{q}}^{2}\!+\!\|\nabla P\|_{L^{2}\cap L^{q}}^{2})\!\right)\text{d}t\nonumber\\
\leq& C(T).
\end{align}
\end{lemma}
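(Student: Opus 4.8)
The plan is to bootstrap from the lower–order bounds of Lemmas \ref{lem4.2} and \ref{lem4.3} through the Stokes estimate \eqref{eq4.7}, and then to close the control of $\|\varrho\|_{H^{1}\cap W^{1,q}}$ via the transport structure of $\nabla\varrho$. The governing observation is that \eqref{eq4.7} converts the already–controlled quantities $\varrho^{\frac12}\dot u$ and $|\nabla d||\nabla^{2}d|$ into bounds on $\nabla^{2}u$ and $\nabla P$ in $L^{2}$ and $L^{q}$; once $\nabla^{2}u\in L^{q}$ is available with the correct time integrability, a Gagliardo–Nirenberg interpolation yields an $L^{1}(0,T;L^{\infty})$ bound on $\nabla u$, which is precisely what the continuity equation requires to propagate the regularity of the density.

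First I would dispatch the $L^{2}$–in–time part of \eqref{eq4.23}. Taking $p=2$ in \eqref{eq4.7} gives $\|\nabla^{2}u\|_{L^{2}}^{2}+\|\nabla P\|_{L^{2}}^{2}\le C\|\varrho^{\frac12}\dot u\|_{L^{2}}^{2}+C\||\nabla d||\nabla^{2}d|\|_{L^{2}}^{2}$; integrating in $t$, the first term is bounded by \eqref{eq5.2}, while $\||\nabla d||\nabla^{2}d|\|_{L^{2}}^{2}\le C\|\nabla^{2}d\|_{L^{2}}^{2}\bigl(\|\nabla^{2}d\|_{L^{2}}^{2}+\|\nabla^{2}d\|_{L^{2}}\|\nabla^{3}d\|_{L^{2}}\bigr)$ (Gagliardo–Nirenberg in 2D together with \eqref{eq5.4}) integrates thanks to $\int_{0}^{T}\|\nabla^{3}d\|_{L^{2}}^{2}\,\text{d}t\le C$. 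For the $L^{q}$–in–time part I would repeat, essentially verbatim, the computation producing \eqref{eq3.34}: take $p=q$ in \eqref{eq4.7}, estimate $\|\varrho\dot u\|_{L^{q}}$ by interpolating $\|\varrho^{\frac12}\dot u\|_{L^{2}}$ against $\|\nabla\dot u\|_{L^{2}}$ through Lemma \ref{lem2.4} and Gagliardo–Nirenberg, control $\||\nabla d||\nabla^{2}d|\|_{L^{q}}$ in terms of $\|\nabla^{2}d\|_{L^{2}}$ and $\|\nabla^{3}d\|_{L^{2}}$, and then absorb the singularity at $t=0$ using the weighted bounds \eqref{eq4.11}--\eqref{eq4.12}. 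The exponent $\tfrac{q+1}{q}$ makes the unweighted time integral convergent, and multiplying by $t$ before integrating (so that $\sup_{t}t\|\varrho^{\frac12}\dot u\|_{L^{2}}^{2}\le C$ and $\int_{0}^{T}\|\nabla\dot u\|_{L^{2}}^{2}\,\text{d}t\le C$ can be used) yields the $t$–weighted $L^{2}\cap L^{q}$ bounds simultaneously.

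The remaining and genuinely delicate step is the density bound, and it is where I expect the main obstacle to sit. Differentiating $\eqref{eq1.1}_{1}$ and using $\operatorname{div}u=0$ gives $(\partial_{t}+u\cdot\nabla)\nabla\varrho=-\nabla u\cdot\nabla\varrho$, whence $\frac{d}{dt}\|\nabla\varrho\|_{L^{r}}\le C\|\nabla u\|_{L^{\infty}}\|\nabla\varrho\|_{L^{r}}$ for $r=2,q$, so that Gronwall's inequality reduces everything to an $L^{1}(0,T;L^{\infty})$ bound on $\nabla u$. Such a bound is not directly furnished by the energy quantities, and on $\mathbb{R}^{2}$ the embedding $W^{1,q}\hookrightarrow L^{\infty}$ is only borderline; I would instead invoke the interpolation $\|\nabla u\|_{L^{\infty}}\le C\|\nabla u\|_{L^{2}}^{1-\theta}\|\nabla^{2}u\|_{L^{q}}^{\theta}$ with $\theta=\frac{q}{2(q-1)}\in(0,1)$. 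Since $\theta<1<\tfrac{q+1}{q}$, Hölder in time combined with $\sup_{t}\|\nabla u\|_{L^{2}}\le C$ and the just–established $\int_{0}^{T}\|\nabla^{2}u\|_{L^{q}}^{(q+1)/q}\,\text{d}t\le C$ shows $\int_{0}^{T}\|\nabla u\|_{L^{\infty}}\,\text{d}t<\infty$; here $(q+1)/q>1$ is exactly what guarantees $\nabla^{2}u\in L^{1}(0,T;L^{q})$. Feeding this back into the Gronwall estimate bounds $\|\nabla\varrho\|_{L^{2}\cap L^{q}}$, and combining with $\|\varrho\|_{L^{p}}=\|\varrho_{0}\|_{L^{p}}$ from \eqref{eq3.6} delivers the asserted control of $\|\varrho\|_{H^{1}\cap W^{1,q}}$, completing the proof of \eqref{eq4.23}.
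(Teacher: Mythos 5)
Your proposal is correct and follows essentially the same route as the paper: the Stokes estimate \eqref{eq4.7} combined with the interpolation of $\|\varrho\dot u\|_{L^{q}}$ via Lemma \ref{lem2.4} and the weighted bounds \eqref{eq4.11}--\eqref{eq4.12}, the Gagliardo--Nirenberg bound $\|\nabla u\|_{L^{\infty}}\leq C\|\nabla u\|_{L^{2}}^{\frac{q-2}{2(q-1)}}\|\nabla^{2}u\|_{L^{q}}^{\frac{q}{2(q-1)}}$ to get $\nabla u\in L^{1}(0,T;L^{\infty})$, and Gronwall on the transport equation for $\nabla\varrho$. The only difference is the order of the steps (the paper derives the density bound first and the integral bounds on $\nabla^{2}u$, $\nabla P$ at the end), which is immaterial.
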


\begin{proof}
We first notice that for any $r\geq 2$, one follows from $\eqref{eq1.1}_{1}$ and $\eqref{eq1.1}_{4}$ that
\begin{align}\label{eq4.24}
\frac{d}{dt} \|\nabla \varrho\|_{L^{r}}\leq C\|\nabla u\|_{L^{\infty}} \|\nabla \varrho\|_{L^{r}}.
\end{align}
Using Gagliardo-Nirenberg inequality, \eqref{eq5.2}, and \eqref{eq4.7}, it follows that
\begin{align}\label{eq4.25}
\|\nabla u\|_{L^{\infty}}\leq &C\|\nabla u\|_{L^{2}}^{\frac{q-2}{2(q-1)}} \|\nabla^{2} u\|_{L^{q}}^{\frac{q}{2(q-1)}}
\leq C\left( \|\varrho\dot{u}\|_{L^{q}}^{\frac{q}{2(q-1)}}+\||\nabla d||\nabla^{2}d|\|_{L^{q}}^{\frac{q}{2(q-1)}}\right),
\end{align}
for all $q\in (2,\infty)$. On the one hand, by using Gagliardo-Nirenberg and Lemma \ref{lem2.4}, one has
\begin{align*}
\|\varrho\dot{u}\|_{L^{q}} \leq& C\|\varrho \dot{u}\|_{L^{2}}^{\frac{2q-1}{q^{2}-1}} \|\varrho\dot{u}\|_{L^{2q^{2}}}^{\frac{q(q-2)}{q^{2}-1}}
\leq C\|\varrho^{\frac{1}{2}} \dot{u}\|_{L^{2}}^{\frac{2q-1}{q^{2}-1}} (\|\varrho^{\frac{1}{2}} \dot{u}\|_{L^{2}}+\|\nabla \dot{u}\|_{L^{2}})^{\frac{q(q-2)}{q^{2}-1}}\nonumber\\
\leq& C \|\varrho^{\frac{1}{2}} \dot{u}\|_{L^{2}}+C \|\varrho^{\frac{1}{2}} \dot{u}\|_{L^{2}}^{\frac{2q-1}{q^{2}-1}}\|\nabla \dot{u}\|_{L^{2}}^{\frac{q(q-2)}{q^{2}-1}}.
\end{align*}
From the above inequality, it is easy to see that
\begin{align}\label{eq4.26}
\int_{0}^{T}\!\!\!\left(\|\varrho\dot{u}\|_{L^{q}}^{\frac{q+1}{q}}\!\!+t\|\varrho\dot{u}\|_{L^{q}}^{2}\!\right)\!\text{d}t
\leq &C\!\!\int_{0}^{T}\! t^{-\frac{q+1}{2q}} \! \left(t\|\varrho^{\frac{1}{2}}\dot{u}\|_{L^{2}}^{2}\right)^{\frac{2q-1}{2q(q-1)}}
\left(t\|\nabla \dot{u}\|_{L^{2}}^{2}\right)^{\frac{q-2}{2(q-1)}}\text{d}t +C\!\int_{0}^{T}\! \|\varrho^{\frac{1}{2}} \dot{u}\|_{L^{2}}^{\frac{q+1}{q}}\text{d}t\nonumber\\
&+C\!\int_{0}^{T} \!\left(t\|\varrho^{\frac{1}{2}} \dot{u}\|_{L^{2}}^{2}\right)^{\frac{2q-1}{q^{2}-1}}
\left( t \|\nabla \dot{u}\|_{L^{2}}^{2}\right)^{\frac{q(q-2)}{q^{2}-1}}\text{d}t+C\int_{0}^{T} t\|\varrho^{\frac{1}{2}} \dot{u}\|_{L^{2}}^{2}\text{d}t\nonumber\\
\leq& C\sup_{0\leq t\leq T}\left(t\|\varrho^{\frac{1}{2}}\dot{u}\|_{L^{2}}^{2}\right)^{\frac{(2q-1)}{2q(q-1)}} \int_{0}^{T}\left(t\|\nabla \dot{u}\|_{L^{2}}^{2}+t^{-\frac{q^{3}+q^{2}-q-1}{q^{3}+q^{2}}}\right)\text{d}t\nonumber\\
&+C\int_{0}^{T}\left(t\|\varrho^{\frac{1}{2}} \dot{u}\|_{L^{2}}^{2}+t\|\nabla \dot{u}\|_{L^{2}}^{2}\right)\text{d}t+C\int_{0}^{T}(1+\|\varrho^{\frac{1}{2}} \dot{u}\|_{L^{2}}^{2})\text{d}t\nonumber\\
\leq& C(T)
\end{align}
owing to \eqref{eq5.2} and \eqref{eq4.11}.  On the other hand, it follows from H\"{o}lder's and Gagliardo-Nirenberg inequalities,  and \eqref{eq5.2} that
\begin{align}\label{eq4.27}
&\int_{0}^{T} \left(\||\nabla d||\nabla^{2}d|\|_{L^{q}}^{\frac{q+1}{q}}+t\||\nabla d||\nabla^{2}d|\|_{L^{q}}^{2}\right)\text{d}t\nonumber\\
\leq &C\int_{0}^{T} \left(\left(\|\nabla d\|_{L^{2q}}\|\nabla^{2}d\|_{L^{2q}}\right)^{\frac{q+1}{q}}+t\left(\|\nabla d\|_{L^{2q}}\|\nabla^{2}d\|_{L^{2q}}\right)^{2}\right)\text{d}t\nonumber\\
\leq&C\int_{0}^{T}  \left(\left(\|\nabla d\|_{L^{2}}^{\frac{1}{q}}\|\nabla^{2} d\|_{L^{2}}\|\nabla^{3}d\|_{L^{2}}^{1-\frac{1}{q}}\right)^{\frac{q+1}{q}}+t\left(\|\nabla d\|_{L^{2}}^{\frac{1}{q}}\|\nabla^{2} d\|_{L^{2}}\|\nabla^{3}d\|_{L^{2}}^{1-\frac{1}{q}}\right)^{2}\right)\text{d}t\nonumber\\
\leq& C\!\int_{0}^{T} \!\left(\|\nabla^{3} d\|_{L^{2}}^{1-\frac{1}{q^{2}}}+t\|\nabla^{3}d\|_{L^{2}}^{2-\frac{2}{q}}\right)\text{d}t
\leq C\!\int_{0}^{T}\! \left(\|\nabla^{3} d\|_{L^{2}}^{2}+t^{q}+1\right)\text{d}t\leq C(T).
\end{align}
Hence, combining \eqref{eq4.25}, \eqref{eq4.26} and \eqref{eq4.27} together, it follows that
\begin{align}\label{eq4.28}
\int_{0}^{T} \|\nabla u\|_{L^{\infty}}\text{d}t\leq C(T).
\end{align}
Thus, applying Gronwall's inequality to \eqref{eq4.24} yields
\begin{align}\label{eq4.29}
\sup_{0\leq t\leq T} \|\nabla \varrho\|_{L^{2}\cap L^{q}}\leq C(T).
\end{align}

Finally, it is easy to deduce from \eqref{eq4.7}, \eqref{eq4.26}, \eqref{eq4.27}, \eqref{eq5.2} and \eqref{eq5.4} that
\begin{align*}
\int_{0}^{T} \!\!\left(\|\nabla^{2} u\|_{L^{2}}^{2}\!+\!\|\nabla P\|_{L^{2}}^{2}\!+\!\|\nabla^{2} u\|_{L^{q}}^{\frac{q+1}{q}}\!\!+\!\|\nabla P\|_{L^{q}}^{\frac{q+1}{q}}\!\!+t(\|\nabla^{2} u\|_{L^{2}\cap L^{q}}^{2}\!+\!\|\nabla P\|_{L^{2}\cap L^{q}}^{2})\!\right)\text{d}t
\leq C(T).
\end{align*}
This together with \eqref{eq3.6} and \eqref{eq4.29} ensures \eqref{eq4.23}. This completes the proof of Lemma \ref{lem4.5}.
\end{proof}
\medskip

In the following Lemma, we shall give some spatial estimate on $\nabla \varrho$, $\nabla d$ and $\nabla^{2}d$, which are crucial to derive the estimates on the gradients of both $u_{t}$, $d_{t}$ and $\nabla d_{t}$.

\begin{lemma}\label{lem4.6}
There exists a positive constant $C$ depending on $T$ such that
\begin{align}\label{eq4.30}
&\sup_{0\leq t\leq T} \|\varrho \bar{x}^{a}\|_{L^{1}\cap H^{1}\cap W^{1,q}}\leq C(T),\\
      \label{eq4.31}
&\sup_{0\leq t\leq T} \|\nabla d\bar{x}^{\frac{a}{2}}\|_{L^{2}}^{2} +\int_{0}^{T} \|\nabla^{2} d\bar{x}^{\frac{a}{2}}\|_{L^{2}}^{2}\text{d}t
\leq C(T),
\end{align}
and
\begin{align}\label{eq4.32}
\sup_{0\leq t\leq T} t\|\nabla^{2} d\bar{x}^{\frac{a}{2}}\|_{L^{2}}^{2} +\int_{0}^{T} t\|\nabla^{3} d\bar{x}^{\frac{a}{2}}\|_{L^{2}}^{2}\text{d}t
\leq C(T).
\end{align}
\end{lemma}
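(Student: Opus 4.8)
The plan is to upgrade the interior (ball) estimates of Section~3 to the whole space, replacing the crude $\Phi$-exponential bounds by the $T$-dependent but non-blowing-up bounds established in Lemmas~\ref{lem4.2}--\ref{lem4.5}. For \eqref{eq4.30}, I would start from the transport identity
\begin{align*}
\partial_{t}(\varrho\bar{x}^{a})+u\cdot\nabla(\varrho\bar{x}^{a})=a\varrho\bar{x}^{a}\,u\cdot\nabla\ln\bar{x},
\end{align*}
already used in the proof of Lemma~\ref{lem3.7}. The $L^{1}$ part is \eqref{eq4.22}. Since $|u\cdot\nabla\ln\bar{x}|\leq C|u|(e+|x|^{2})^{-1/2}\leq C|u\bar{x}^{-\delta}|$ for suitable $\delta\in(0,1)$, the bound \eqref{eq3.36} (valid on $\mathbb{R}^{2}$ by its footnote) together with \eqref{eq5.2} and the $L^{2}(0,T)$-bound on $\|\nabla^{2}u\|_{L^{2}}$ from \eqref{eq4.23} yields $\int_{0}^{T}\|u\cdot\nabla\ln\bar{x}\|_{L^{\infty}}\text{d}t\leq C(T)$. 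Testing the transport identity against $r|\varrho\bar{x}^{a}|^{r-2}(\varrho\bar{x}^{a})$ and using $\operatorname{div}u=0$ gives, via Gronwall, $\sup_{[0,T]}\|\varrho\bar{x}^{a}\|_{L^{r}}\leq C(T)$ for $r\in[2,q]$, and the characteristic form of the same identity gives $\sup_{[0,T]}\|\varrho\bar{x}^{a}\|_{L^{\infty}}\leq C(T)$. Finally, repeating the computation \eqref{eq3.35} for $\nabla(\varrho\bar{x}^{a})$ in $L^{r}$ ($r\in[2,q]$), the coefficient is now controlled by $C(\Phi^{\beta}+\|\nabla^{2}u\|_{L^{2}\cap L^{q}})$, whose time integral is finite by \eqref{eq4.28} and \eqref{eq4.23}; Gronwall then closes \eqref{eq4.30}.

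For \eqref{eq4.31}, I would rerun the argument of Lemma~\ref{lem3.2} that produced \eqref{eq3.5}, namely multiply \eqref{eq3.8} by $\nabla d\,\bar{x}^{a}$ and integrate over $\mathbb{R}^{2}$. On the whole space the boundary terms that appeared on $\partial B_{R}$ simply drop, so one obtains the same inequality \eqref{eq3.9} with the five terms $I_{1},\dots,I_{5}$. The crucial point is that each $I_{i}$ was estimated in the form $\tfrac{1}{10}\|\nabla^{2}d\,\bar{x}^{a/2}\|_{L^{2}}^{2}+C\Phi(t)\|\nabla d\,\bar{x}^{a/2}\|_{L^{2}}^{2}$, and here the factor $\Phi(t)=1+\|\varrho^{1/2}u\|_{L^{2}}^{2}+\|\nabla u\|_{L^{2}}^{2}+\|\nabla d\,\bar{x}^{a/2}\|_{L^{2}}^{2}+\|\nabla^{2}d\|_{L^{2}}^{2}+\cdots$ is uniformly bounded in $t$ by \eqref{eq5.2} (the weighted piece being absorbed by the quantity being estimated). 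Thus the coefficient multiplying $\|\nabla d\,\bar{x}^{a/2}\|_{L^{2}}^{2}$ is $L^{1}(0,T)$, and Gronwall together with the integrability of $\|\nabla^{2}d\,\bar{x}^{a/2}\|_{L^{2}}^{2}$ on the left gives \eqref{eq4.31}.

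For the time-weighted estimate \eqref{eq4.32}, I would differentiate \eqref{eq3.8} once more to get $\nabla^{2}d_{t}-\Delta\nabla^{2}d=-\nabla^{2}(u\cdot\nabla d)+\nabla^{2}(|\nabla d|^{2}d)$, multiply by $\nabla^{2}d\,\bar{x}^{a}$, and integrate over $\mathbb{R}^{2}$; the principal part yields $\tfrac12\frac{d}{dt}\|\nabla^{2}d\,\bar{x}^{a/2}\|_{L^{2}}^{2}+\|\nabla^{3}d\,\bar{x}^{a/2}\|_{L^{2}}^{2}$ up to lower-order weight contributions handled exactly as in the displayed integration-by-parts computation preceding \eqref{eq3.9}. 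After estimating the right-hand side so as to absorb a small multiple of $\|\nabla^{3}d\,\bar{x}^{a/2}\|_{L^{2}}^{2}$, one reaches a differential inequality of the form $\frac{d}{dt}\|\nabla^{2}d\,\bar{x}^{a/2}\|_{L^{2}}^{2}+\|\nabla^{3}d\,\bar{x}^{a/2}\|_{L^{2}}^{2}\leq \mathcal{C}(t)\|\nabla^{2}d\,\bar{x}^{a/2}\|_{L^{2}}^{2}+\mathcal{R}(t)$, with $\mathcal{C}\in L^{1}(0,T)$ and $\mathcal{R}\in L^{1}(0,T)$ coming from \eqref{eq5.2}, \eqref{eq5.3}, \eqref{eq4.11}, \eqref{eq4.23} and \eqref{eq4.31}. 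Multiplying by $t$, so that $\frac{d}{dt}\big(t\|\nabla^{2}d\,\bar{x}^{a/2}\|_{L^{2}}^{2}\big)+t\|\nabla^{3}d\,\bar{x}^{a/2}\|_{L^{2}}^{2}\leq \|\nabla^{2}d\,\bar{x}^{a/2}\|_{L^{2}}^{2}+t\mathcal{C}(t)\|\nabla^{2}d\,\bar{x}^{a/2}\|_{L^{2}}^{2}+t\mathcal{R}(t)$, integrating in time, and using \eqref{eq4.31} to bound $\int_{0}^{T}\|\nabla^{2}d\,\bar{x}^{a/2}\|_{L^{2}}^{2}\text{d}t$, Gronwall's inequality then yields \eqref{eq4.32}.

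The main obstacle is the last step: in $\nabla^{2}(u\cdot\nabla d)$ the top-order transport term $u\cdot\nabla^{3}d$ (and the mixed term $\nabla u\cdot\nabla^{2}d$), and in $\nabla^{2}(|\nabla d|^{2}d)$ the cubic terms such as $|\nabla d|^{2}\nabla^{2}d$ and $\nabla d\,\nabla^{2}d\,\nabla d$, all carry the weight $\bar{x}^{a}$ and must be controlled without leaving an unabsorbable copy of $\|\nabla^{3}d\,\bar{x}^{a/2}\|_{L^{2}}$. The coupling term $u\cdot\nabla^{3}d$ is the delicate one: it is handled by splitting the weight and invoking the weighted bound of Lemma~\ref{lem2.4} on $\|\varrho^{\alpha}u\|_{L^{r}}$ (via \eqref{eq3.14}) to trade $u$ against $\|\varrho^{1/2}u\|_{L^{2}}+\|\nabla u\|_{L^{2}}$, which are bounded by \eqref{eq5.2}, while the $\nabla^{3}d$ factor is paired with $\|\nabla^{3}d\,\bar{x}^{a/2}\|_{L^{2}}$ and absorbed; the remaining nonlinear terms are estimated by Hölder and Gagliardo--Nirenberg inequalities together with \eqref{eq4.31} and the $L^{\infty}$-in-time control of $\|\nabla u\|_{L^{\infty}}$ from \eqref{eq4.25}.
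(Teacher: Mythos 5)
Your proposal is correct and follows essentially the same route as the paper: \eqref{eq4.30} is the weighted transport/Gronwall argument of Lemma \ref{lem3.7} with the $\Phi^{\beta}$-coefficients replaced by the time-integrable bounds \eqref{eq4.28} and \eqref{eq4.23} (the paper simply cites \cite[Lemma 3.6]{LSZ} for this), \eqref{eq4.31} is the proof of \eqref{eq3.5} rerun with the coefficients now uniformly bounded by \eqref{eq5.2} and \eqref{eq5.4}, and \eqref{eq4.32} is the same time-weighted energy estimate at the level of $\|\nabla^{2}d\,\bar{x}^{a/2}\|_{L^{2}}^{2}$ closed by \eqref{eq4.23} and \eqref{eq4.31}. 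The only cosmetic difference is that for \eqref{eq4.32} you test the twice-differentiated equation against $\nabla^{2}d\,\bar{x}^{a}$ while the paper multiplies \eqref{eq3.8} by $\Delta\nabla d\,\bar{x}^{a}$; after integration by parts these yield the same differential inequality, with your version producing the term $\nabla^{2}u\cdot\nabla d$ in place of the paper's $|\nabla u|^{2}|\nabla d|^{2}$ (its $K_{7}$), both controlled by $\|\nabla^{2}u\|_{L^{2}}^{2}\in L^{1}(0,T)$.
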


\begin{proof}
With \eqref{eq4.22} in hand, the proof of \eqref{eq4.30} is exactly the same as \cite[Lemma 3.6]{LSZ}. Similarly to \eqref{eq3.5}, we can prove \eqref{eq4.31} with some suitable revisions, and we omit it for simplicity.

 Now, multiplying \eqref{eq3.8}  by $\Delta \nabla d \bar{x}^{a}$ and integrating by parts, it follows that
              \begin{align}\label{eq4.33}
              \frac{1}{2}\frac{d}{dt} \|\nabla^{2}d\bar{x}^{\frac{a}{2}}\|_{L^{2}}^{2}+&\! \|\nabla^{3} d\bar{x}^{\frac{a}{2}}\|_{L^{2}}^{2}
              =-\!\int\! \nabla d_{t}\nabla^{2} d\nabla \bar{x}^{a}\text{d}x+\!\int \!\nabla(u\cdot\nabla d)\nabla \Delta d\bar{x}^{a}\text{d}x\nonumber\\
              &-\!\int\! \nabla(|\nabla d|^{2}d)\nabla \Delta d\bar{x}^{a}\text{d}x
              -2\int \nabla^{3}d\nabla^{2}d\nabla\bar{x}^{a}\text{d}x -\int |\nabla^{2} d|^{2} \nabla^{2}\bar{x}^{a}\text{d}x
              \nonumber\\
              \leq&\frac{1}{4} \!\|\nabla^{3} d\bar{x}^{\frac{a}{2}}\|_{L^{2}}^{2}\!+\!\int\!|\Delta\nabla d||\nabla^{2} d|\nabla\bar{x}^{a}\text{d}x
              \!+\!\int\! | u||\nabla d| |\nabla^{3} d|\nabla\bar{x}^{a}\text{d}x\!+\!\int\! | u||\nabla d| |\nabla^{2} d|\nabla^{2}\bar{x}^{a}\text{d}x\nonumber\\
              &+\!\int\! |u||\nabla^{2}d|^{2}\nabla\bar{x}^{a}\text{d}x+\int |\nabla d|^{3}|\nabla^{2} d| \nabla \bar{x}^{a}\text{d}x+\int |\nabla d||\nabla^{2} d|^{2}\nabla \bar{x}^{a}\text{d}x\nonumber\\
              &+\int |\nabla u|^{2}|\nabla d|^{2}\bar{x}^{a}\text{d}x+\int |\nabla d|^{6}\bar{x}^{a}\text{d}x+\int
              |\nabla d|^{2}|\nabla^{2} d|^{2}\bar{x}^{a}\text{d}x+\int |\nabla^{2} d|^{2} \nabla^{2}\bar{x}^{a}\text{d}x\nonumber\\
              \triangleq&\frac{1}{4} \|\nabla^{3} d\bar{x}^{\frac{a}{2}}\|_{L^{2}}^{2}+K_{1}+K_{2}+\cdots+K_{10}.
              \end{align}
              We estimate all the terms $K_{i} (i=1,2,\cdots, 10)$ on the right side of \eqref{eq4.33} term by term as follows,
              \begin{align*}
              K_{1}\leq & \int|\nabla\Delta d| |\nabla^{2} d| \bar{x}^{a}\text{d}x\leq \frac{1}{16} \|\nabla^{3}d\bar{x}^{\frac{a}{2}}\|_{L^{2}}^{2}
              +C\|\nabla^{2} d\bar{x}^{\frac{a}{2}}\|_{L^{2}}^{2},\nonumber\\
              K_{2}\leq & \int |u||\nabla d||\nabla^{3} d|\bar{x}^{a-\frac{3}{4}}\text{d}x\leq \frac{1}{32}\|\nabla^{3}d\bar{x}^{\frac{a}{2}}\|_{L^{2}}^{2}
              +C\|\nabla^{2} d\bar{x}^{\frac{a}{2}}\|_{L^{4}}^{2}\|u\bar{x}^{-\frac{3}{4}}\|_{L^{2}}^{2}\nonumber\\
              \leq&\frac{1}{32}\|\nabla^{3}d\bar{x}^{\frac{a}{2}}\|_{L^{2}}^{2}\!
              +\!C\|\nabla^{2} d\bar{x}^{\frac{a}{2}}\|_{\!L^{2}}(\|\nabla^{3}d\bar{x}^{\frac{a}{2}}\|_{\!L^{2}}\!+\!\|\nabla^{2}d\nabla \bar{x}^{\frac{a}{2}}\|_{\!L^{2}}\!)(\|\varrho^{\frac{1}{2}} u\|_{\!L^{2}}^{2}\!+\!\|\nabla u\|_{\!L^{2}}^{2})\nonumber\\
              \leq&\frac{1}{16}\|\nabla^{3}d\bar{x}^{\frac{a}{2}}\|_{L^{2}}^{2}
              +C\|\nabla^{2} d\bar{x}^{\frac{a}{2}}\|_{L^{2}}^{2},\nonumber\\
              K_{3}\leq &C\|\nabla^{2} d\bar{x}^{\frac{a}{2}}\|_{L^{2}}
              \|\nabla d\bar{x}^{\frac{a}{2}}\|_{L^{4}} \|u\bar{x}^{-\frac{3}{4}}\|_{L^{4}}\nonumber\\
              \leq & \|\nabla^{2} d\bar{x}^{\frac{a}{2}}\|_{L^{2}}
              \|\nabla d\bar{x}^{\frac{a}{2}}\|_{L^{2}}(\|\nabla^{2} d\bar{x}^{\frac{a}{2}}\|_{L^{2}}+\|\nabla d \nabla\bar{x}^{\frac{a}{2}}\|_{L^{2}})(\|\varrho^{\frac{1}{2}}u\|_{L^{2}}+\|\nabla u\|_{L^{2}})\nonumber\\
              \leq& C(1+\|\nabla^{2} d\bar{x}^{\frac{a}{2}}\|_{L^{2}}^{2})\\
              K_{4}\leq &C \|\nabla^{2} d\bar{x}^{\frac{a}{2}}\|_{L^{2}} \|\nabla^{2}d\bar{x}^{\frac{a}{2}}\|_{L^{4}} \|u\bar{x}^{-\frac{3}{4}}\|_{L^{4}}
              \nonumber\\
              \leq&\|\nabla^{2} d\bar{x}^{\frac{a}{2}}\|_{L^{2}}^{\frac{3}{2}}
               \|\nabla^{2}d\bar{x}^{\frac{a}{2}}\|_{L^{2}}^{\frac{1}{2}}(\|\nabla^{3} d\bar{x}^{\frac{a}{2}}\|_{L^{2}}+\|\nabla^{2}d\nabla \bar{x}^{\frac{a}{2}}\|_{L^{2}})^{\frac{1}{2}} (\|\varrho^{\frac{a}{2}} u\|_{L^{2}}
              +\|\nabla u\|_{L^{2}})\nonumber\\
              \leq&\frac{1}{16}\|\nabla^{3}d\bar{x}^{\frac{a}{2}}\|_{L^{2}}^{2}
              +C\|\nabla^{2} d\bar{x}^{\frac{a}{2}}\|_{L^{2}}^{2},\nonumber\\
              K_{5}\leq &C \int |\nabla d|^{3} |\nabla^{2}d|\bar{x}^{a}\text{d}x\leq C\|\nabla^{2}d\bar{x}^{\frac{a}{2}}\|_{L^{2}}
              \|\nabla d\bar{x}^{\frac{a}{2}}\|_{L^{4}} \|\nabla d\|_{L^{8}}^{2}\nonumber\\
              \leq&  C\|\nabla^{2}d\bar{x}^{\frac{a}{2}}\|_{L^{2}}
              \|\nabla d\bar{x}^{\frac{a}{2}}\|_{L^{2}}^{\frac{1}{2}}(\|\nabla^{2}d\bar{x}^{\frac{a}{2}}\|_{L^{2}}+\|\nabla d\nabla\bar{x}^{\frac{a}{2}}\|_{L^{2}})^{\frac{1}{2}} \|\nabla d\|_{L^{2}}^{\frac{1}{2}} \|\nabla^{2} d\|_{L^{2}}^{\frac{3}{2}}\nonumber\\
              \leq& C(1+ \|\nabla^{2} d\bar{x}^{\frac{a}{2}}\|_{L^{2}}^{2}),\nonumber\\
              K_{6}\leq\! C\!\! & \int\! |\nabla d||\nabla^{2} \!d|\nabla\bar{x}^{a}\text{d}x\!\leq\!C\! \|\nabla d\bar{x}^{\frac{a}{2}}\|_{\!L^{2}}\|\nabla^{2}\!d\bar{x}^{\frac{a}{2}}\|_{\!L^{4}}^{2}\!
              \leq\! C\|\nabla^{2}\!d\bar{x}^{\frac{a}{2}}\|_{\!L^{2}}(\|\nabla^{3}\!d\bar{x}^{\frac{a}{2}}\|_{\!L^{\!2}}
              \!+\!\|\nabla^{2}\!d\bar{x}^{\frac{a}{2}}\|_{\!L^{\!2}})\nonumber\\
              \leq& \frac{1}{16}\|\nabla^{3}d\bar{x}^{\frac{a}{2}}\|_{L^{2}}^{2}
              +C\|\nabla^{2} d\bar{x}^{\frac{a}{2}}\|_{L^{2}}^{2},\nonumber\\
              K_{7}\leq& C\|\nabla d\bar{x}^{\frac{a}{2}}\|_{L^{4}}^{4} \|\nabla u\|_{L^{4}}^{2}
              \leq C \|\nabla d\bar{x}^{\frac{a}{2}}\|_{L^{2}} (\|\nabla^{2} d\bar{x}^{\frac{a}{2}}\|_{L^{2}} +\|\nabla d\nabla \bar{x}^{\frac{a}{2}}\|_{L^{2}} )\|\nabla u\|_{L^{2}}\|\nabla^{2} u\|_{L^{2}}\nonumber\\
              \leq& C\|\nabla^{2} u\|_{L^{2}}^{2}+C(1+ \|\nabla^{2} d\bar{x}^{\frac{a}{2}}\|_{L^{2}}),\nonumber\\
              K_{8}+ K_{9}\leq& C\|\nabla d\|_{L^{8}}^{4} \|\nabla^{2}d\bar{x}^{\frac{a}{2}}\|_{L^{4}}^{2}+C\|\nabla d\|_{L^{4}}^{2}\|\nabla^{2}d\bar{x}^{\frac{a}{2}}\|_{L^{4}}^{2}\nonumber\\
              \leq& C(\|\nabla d\|_{L^{2}}\|\nabla^{2}d\|_{L^{2}}^{2}+\|\nabla d\|_{L^{2}}\|\nabla^{2} d\|_{L^{2}}) \|\nabla^{2} d\bar{x}^{\frac{a}{2}}\|_{L^{2}}
              (\|\nabla^{3}d\bar{x}^{\frac{a}{2}}\|_{L^{2}}\!+\!\|\nabla^{2}d\nabla\bar{x}^{\frac{a}{2}}\|_{L^{2}})\nonumber\\
              \leq& \frac{1}{16}\|\nabla^{3}d\bar{x}^{\frac{a}{2}}\|_{L^{2}}^{2}+C\|\nabla^{2}d\bar{x}^{\frac{a}{2}}\|_{L^{2}}^{2},
              \nonumber\\
              K_{10}\leq & \int |\nabla^{2} d|^{2} \bar{x}^{a}\text{d}x\leq C\|\nabla^{2} d\bar{x}^{\frac{a}{2}}\|_{L^{2}}^{2},
              \end{align*}
              where we have used H\"{o}lder's and Gagliardo-Nirenberg inequalities, \eqref{eq5.2}, \eqref{eq5.4}, \eqref{eq3.14} and \eqref{eq4.31}. Inserting these estimates
              $K_{i} (i=1,2,\cdots,10)$ above into \eqref{eq4.33}, after by using \eqref{eq4.21}, we have
              \begin{align*}
              \frac{d}{dt} \|\nabla^{2}d\bar{x}^{\frac{a}{2}}\|_{L^{2}}^{2}+&\! \|\nabla^{3} d\bar{x}^{\frac{a}{2}}\|_{L^{2}}^{2}
              \leq C\|\nabla^{2}d\bar{x}^{\frac{a}{2}}\|_{L^{2}}^{2}+C(1+\|\nabla^{2} u\|_{L^{2}}^{2}).
              \end{align*}
              Multiplying the above inequality with $t$, and then using  Gronwall's inequality and \eqref{eq4.23},  we prove \eqref{eq4.32}, this completes the proof of Lemma \ref{lem4.6}.
\end{proof}

\begin{lemma}\label{lem4.7}
There exists a positive constant $C$ depending on $T$ such that
\begin{align}\label{eq4.34}
&\sup_{0\leq t\leq T} t\left(\|\varrho^{\frac{1}{2}} u_{t}\|_{L^{2}}^{2}+\|d_{t}\|_{H^{1}}^{2}+\|\nabla^{3} d\|_{L^{2}}^{2}\right)+
\int_{0}^{T} t\left(\|\nabla u_{t}\|_{L^{2}}^{2}+\|\nabla d_{t}\|_{H^{1}}^{2}\right)\text{d}t
\leq C(T).
\end{align}
\end{lemma}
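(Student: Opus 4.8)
The plan is to reproduce the time-weighted scheme of Lemma \ref{lem3.5}, but to close it with a constant depending only on $T$ by feeding in the \emph{global} a priori bounds of Lemmas \ref{lem4.2}--\ref{lem4.6} wherever Lemma \ref{lem3.5} used the exponential-in-$\Phi$ estimates valid on the balls $B_R$. First I would differentiate the momentum equation $\eqref{eq1.1}_2$ in time to obtain \eqref{eq3.25}, test it against $u_t$, and integrate by parts over $\mathbb{R}^2$, arriving at the analogue of \eqref{eq3.26}, namely a differential inequality for $\tfrac12\frac{d}{dt}\|\varrho^{\frac12}u_t\|_{L^2}^2+\|\nabla u_t\|_{L^2}^2$ with the four terms $J_1,\dots,J_4$. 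Each $J_i$ is treated by H\"older's and Gagliardo--Nirenberg inequalities together with the weighted bounds of Lemma \ref{lem2.4} and \eqref{eq3.14}; the key point is that the coupling term $J_4=C\int|\nabla d||\nabla d_t||\nabla u_t|\,\mathrm{d}x$ must be split into a small multiple of $\|\nabla u_t\|_{L^2}^2$, a small multiple of $\|\nabla^2 d_t\|_{L^2}^2$, and a factor $C\Phi^{\beta}(t)\|\nabla d_t\|_{L^2}^2$, so that the top-order derivatives of $d_t$ can subsequently be absorbed using the $\nabla d_t$ equation.

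Next I would differentiate the orientation equation $\eqref{eq1.1}_3$ in time and test with $d_t$ to recover the analogue of \eqref{eq3.28}, and differentiate \eqref{eq3.8} in time (giving \eqref{eq3.29}) and test with $\nabla d_t$ to recover the analogue of \eqref{eq3.31}. The delicate term here is $J_{10}=C\int|u_t||\nabla^2 d||\nabla d_t|\,\mathrm{d}x$: one inserts the weights $\bar{x}^{\pm a/4}$, uses $\|u_t\bar{x}^{-a/4}\|_{L^4}\le C(\|\varrho^{\frac12}u_t\|_{L^2}+\|\nabla u_t\|_{L^2})$ from \eqref{eq3.14}, and controls $\|\nabla^2 d\bar{x}^{\frac{a}{2}}\|_{L^2}$ through the spatial-weighted estimates \eqref{eq4.31}--\eqref{eq4.32} of Lemma \ref{lem4.6}. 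Combining linearly, i.e.\ multiplying the $u_t$ inequality by $2C_2+1$ and adding the $d_t$ and $\nabla d_t$ inequalities, absorbs $\|\nabla u_t\|_{L^2}^2$ and $\|\nabla^2 d_t\|_{L^2}^2$ on the left and yields
\[
\frac{d}{dt}\bigl((2C_2+1)\|\varrho^{\frac12}u_t\|_{L^2}^2+\|d_t\|_{H^1}^2\bigr)+\|\nabla u_t\|_{L^2}^2+\tfrac12\|\nabla d_t\|_{H^1}^2 \le \Xi(t)\bigl(1+\|\varrho^{\frac12}u_t\|_{L^2}^2+\|\nabla d_t\|_{H^1}^2\bigr)+C(1+\|\nabla u\|_{L^2}^2)\|\nabla^3 d\|_{L^2}^2,
\]
where $\Xi(t)$ gathers all the coefficients and is integrable on $(0,T)$ thanks to Lemmas \ref{lem4.2}--\ref{lem4.6}.

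I would then multiply by $t$ and apply Gronwall's inequality; the factor $t$ makes the initial contribution vanish, and the right-hand side is handled using $\sup_t\|\nabla u\|_{L^2}^2\le C$ together with $\int_0^T t\|\nabla^3 d\|_{L^2}^2\,\mathrm{d}t\le C$ from \eqref{eq5.2}--\eqref{eq5.3}, while the identity $u_t=\dot u-u\cdot\nabla u$ combined with Lemma \ref{lem2.4} converts the $\dot u$ bounds of \eqref{eq5.2} and \eqref{eq4.11} into $u_t$ bounds. This controls $t(\|\varrho^{\frac12}u_t\|_{L^2}^2+\|d_t\|_{H^1}^2)$ and $\int_0^T t(\|\nabla u_t\|_{L^2}^2+\|\nabla d_t\|_{H^1}^2)\,\mathrm{d}t$. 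Finally, the bound on $t\|\nabla^3 d\|_{L^2}^2$ follows by inserting these estimates into the elliptic $H^3$-estimate for $\eqref{eq1.1}_3$ exactly as in Lemma \ref{lem3.6}, writing $\|\nabla^3 d\|_{L^2}^2\le C\|\nabla d_t\|_{L^2}^2+(\text{lower-order terms controlled by Lemmas \ref{lem4.2} and \ref{lem4.5}})$.

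The main obstacle will be the top-order coupling: after the time differentiation the nonlinearities $u\cdot\nabla d$ and $|\nabla d|^2 d$ each generate terms carrying $\nabla^2 d_t$, $\nabla u_t$ and $u_t$ simultaneously, and the absorbing constant $C_2$ must be tuned so that \emph{both} $\|\nabla u_t\|_{L^2}^2$ and $\|\nabla^2 d_t\|_{L^2}^2$ survive on the left after the linear combination. Equally delicate is verifying that every coefficient in $\Xi(t)$ is genuinely time-integrable with a constant depending only on $T$; this is precisely where the spatial-weighted estimates of Lemma \ref{lem4.6} (for $J_{10}$) and the time-weighted second-order estimates \eqref{eq4.11}--\eqref{eq4.12} are indispensable, since without them the factor multiplying $\|\nabla d_t\|_{L^2}^2$ would fail to be integrable and the Gronwall step would break down.
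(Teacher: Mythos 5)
Your proposal is correct and follows essentially the same route as the paper: time-differentiate the momentum equation and test with $u_{t}$, time-differentiate $\eqref{eq1.1}_{3}$ and \eqref{eq3.8} and test with $d_{t}$ and $\nabla d_{t}$ respectively, absorb the cross terms via the linear combination with the tuned constant, handle the unweighted $u_{t}$ term through \eqref{eq3.14} and the spatial-weighted bounds of Lemma \ref{lem4.6}, multiply by $t$ and apply Gronwall, and finally recover $t\|\nabla^{3}d\|_{L^{2}}^{2}$ from the elliptic estimate for $\eqref{eq1.1}_{3}$. The paper merely isolates as a separate preliminary step the bound $\int_{0}^{T}(\|\varrho^{\frac{1}{2}}u_{t}\|_{L^{2}}^{2}+\|\nabla d_{t}\|_{L^{2}}^{2})\,\text{d}t\leq C(T)$ (via $u_{t}=\dot{u}-u\cdot\nabla u$ and Lemma \ref{lem2.4}), which you fold into the Gronwall step.
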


\begin{proof}
We shall first prove that
\begin{align}\label{eq4.35}
\int_{0}^{T} \left(\|\varrho^{\frac{1}{2}} u_{t}\|_{L^{2}}^{2}+\|\nabla d_{t}\|_{L^{2}}^{2}\right)\text{d}t\leq C(T).
\end{align}
Indeed, it is easy to see
\begin{align}\label{eq4.36}
\|\varrho^{\frac{1}{2}} u_{t}\|_{L^{2}}^{2}\leq & \|\varrho^{\frac{1}{2}}\dot{u}\|_{L^{2}}^{2}+\|\varrho^{\frac{1}{2}}|u||\nabla u|\|_{L^{2}}^{2}
\leq \|\varrho^{\frac{1}{2}}\dot{u}\|_{L^{2}}^{2}+C\|\varrho^{\frac{1}{2}} u\|_{L^{6}}^{2} \|\nabla u\|_{L^{3}}^{2}\nonumber\\
\leq&  \|\varrho^{\frac{1}{2}}\dot{u}\|_{L^{2}}^{2}+C(\|\varrho^{\frac{1}{2}} u\|_{L^{2}}^{2}+\|\nabla u\|_{L^{2}}^{2}) \|\nabla u\|_{L^{2}}^{\frac{4}{3}} \|\nabla^{2} u\|_{L^{2}}^{\frac{2}{3}}\nonumber\\
\leq & \|\varrho^{\frac{1}{2}}\dot{u}\|_{L^{2}}^{2}+C(1+ \|\nabla^{2} u\|_{L^{2}}^{2})
\end{align}
due to Lemma \ref{lem2.4}, \eqref{eq5.2} and \eqref{eq5.4}. By using \eqref{eq3.8} and Lemma \ref{lem2.3}, it follows that
\begin{align}\label{eq4.37}
\|\nabla d_{t}\|_{L^{2}}^{2}\leq& C(\|\nabla^{3} d\|_{L^{2}}^{2}+\||\nabla u| |\nabla d|\|_{L^{2}}^{2}+\||u||\nabla^{2}d|\|_{L^{2}}^{2}+\||\nabla d|^{3}\|_{L^{2}}^{2}+\||\nabla d||\nabla^{2}d|\|_{L^{2}}^{2})\nonumber\\
\leq& C(\|\nabla^{3} d\|_{L^{2}}^{2}+\|\nabla u\|_{L^{4}}^{2} \|\nabla d\|_{L^{4}}^{2}+\|u\bar{x}^{-\frac{a}{4}}\|_{L^{8}}^{2}
\|\nabla^{2}d\bar{x}^{\frac{a}{2}}\|_{L^{2}}\|\nabla^{2}d\|_{L^{4}}
+\|\nabla d\|_{L^{6}}^{6}+\||\nabla d||\nabla^{2}d|\|_{L^{2}}^{2})\nonumber\\
\leq& C(\|\nabla^{3} d\|_{L^{2}}^{2}+ \|\nabla^{2} u\|_{L^{2}}^{2}+\|\nabla^{2}d\bar{x}^{\frac{a}{2}}\|_{L^{2}}^{2}+\|u\bar{x}^{-\frac{a}{4}}\|_{L^{8}}^{4}
\|\nabla^{2}d\|_{L^{4}}^{2}+1)\nonumber\\
\leq& C(\|\nabla^{3} d\|_{L^{2}}^{2}+ \|\nabla^{2} u\|_{L^{2}}^{2}+\|\nabla^{2}d\bar{x}^{\frac{a}{2}}\|_{L^{2}}^{2}+(\|\varrho^{\frac{1}{2}} u\|_{L^{2}}^{2}+\|\nabla u\|_{L^{2}}^{2})^{2}
\|\nabla^{2}d\|_{L^{4}}^{2}+1)\nonumber\\
\leq&C(\|\nabla^{3} d\|_{L^{2}}^{2}+ \|\nabla^{2} u\|_{L^{2}}^{2}+1)
\end{align}
owing to \eqref{eq5.2}, \eqref{eq5.4} and \eqref{eq4.31}. Combining \eqref{eq4.36}, \eqref{eq4.37}, \eqref{eq5.2}  and \eqref{eq4.23} together, it is easy to see that  \eqref{eq4.35} holds.

Now,
multiplying \eqref{eq3.25} by $u_{t}$ and integrating the resulting equality by parts over $\mathbb{R}^{2}$, we obtain after using $\eqref{eq1.1}_{1}$ and the divergence free condition $\eqref{eq1.1}_{4}$ that
\begin{align}\label{eq4.38}
\frac{1}{2}\frac{d}{dt} \|\varrho^{\frac{1}{2}} u_{t}\|_{L^{2}}^{2}&+\|\nabla u_{t}\|_{L^{2}}^{2}\leq C\int\! \varrho |u| |u_{t}|\left( |\nabla u_{t}|\!+ \!|\nabla u |^{2}\! +\!|u||\nabla^{2} u|\right)\text{d}x\nonumber\\
& +C\int \varrho |u|^{2}|\nabla u| |\nabla u_{t}|\text{d}x+ C\int \varrho |u_{t}|^{2}|\nabla u|\text{d}x +C\int|\nabla d||\nabla d_{t}|
|\nabla u_{t}|\text{d}x\nonumber\\
\triangleq& M_{1}+M_{2}+M_{3}+M_{4}.
\end{align}
By using \eqref{eq5.2}, \eqref{eq5.4}, H\"{o}lder's and Gagliardo-Nirenberg inequalities, it follows that
\begin{align*}
M_{1}\leq &C\|\varrho^{\frac{1}{2}} u\|_{L^{6}} \|\varrho^{\frac{1}{2}} u_{t}\|_{L^{2}}^{\frac{1}{2}}
\|\varrho^{\frac{1}{2}} u_{t}\|_{L^{6}}^{\frac{1}{2}} (\|\nabla u_{t}\|_{L^{2}}+\|\nabla u\|_{L^{4}}^{2})
+C\|\varrho^{\frac{1}{4}} u\|_{L^{12}}^{2} \|\varrho^{\frac{1}{2}} u_{t}\|_{L^{2}}^{\frac{1}{2}}
\|\varrho^{\frac{1}{2}} u_{t}\|_{L^{6}}^{\frac{1}{2}} \|\nabla^{2} u\|_{L^{2}}\nonumber\\
\leq& C(1\!+\!\|\nabla u\|_{L^{2}}^{2}) \|\varrho^{\frac{1}{2}} u_{t}\|_{L^{2}}^{\frac{1}{2}}
(\|\varrho^{\frac{1}{2}} u_{t}\|_{L^{2}}\! +\!\|\nabla u_{t}\|_{L^{2}})^{\frac{1}{2}} (\|\nabla u_{t}\|_{L^{2}}
\!+\!\|\nabla u\|_{L^{2}}^{2}\!+\!\|\nabla u\|_{L^{2}} \|\nabla^{2} u\|_{L^{2}}\!+\!\|\nabla^{2}u\|_{L^{2}})\nonumber\\
\leq& \frac{1}{6} \|\nabla u_{t}\|_{L^{2}}^{2}+ C(1+\|\varrho^{\frac{1}{2}} u_{t}\|_{L^{2}}^{2}
+\|\nabla^{2} u\|_{L^{2}}^{2}),\\
M_{2}+&M_{3}\leq C \|\varrho^{\frac{1}{2}} u\|_{L^{8}}^{2} \|\nabla u\|_{L^{4}}\|\nabla u_{t}\|_{L^{2}}+C\|\varrho^{\frac{1}{2}} u_{t}\|_{L^{6}}^{\frac{3}{2}} \|\varrho^{\frac{1}{2}} u_{t}\|_{L^{2}}^{\frac{1}{2}}\|\nabla u\|_{L^{2}}\nonumber\\
\leq& \frac{1}{6} \|\nabla u_{t}\|_{L^{2}}^{2}+ C(1+\|\varrho^{\frac{1}{2}} u_{t}\|_{L^{2}}^{2}
+\|\nabla^{2} u\|_{L^{2}}^{2}),\\
M_{4}\leq & C\|\nabla d\|_{L^{4}} \|\nabla d_{t}\|_{L^{4}} \|\nabla u_{t}\|_{L^{2}}
\leq  \frac{1}{6} \|\nabla u_{t}\|_{L^{2}}^{2} +C\|\nabla d\|_{L^{2}} \|\nabla^{2} d\|_{L^{2}} \|\nabla d_{t}\|_{L^{2}}\|\nabla^{2} d_{t}\|_{L^{2}}
\nonumber\\
\leq &\frac{1}{6} \|\nabla u_{t}\|_{L^{2}}^{2} +\frac{1}{4C_{3}+1}\|\nabla^{2} d_{t}\|_{L^{2}}^{2}+C\|\nabla d_{t}\|_{L^{2}}^{2},
\end{align*}
where the positive constant $C_{3}$ is defined in the following \eqref{eq4.40} and \eqref{eq4.43}. Substituting the estimates of $M_{i} (i=1,2,\cdots, 4)$ into \eqref{eq4.38},  it follows that
\begin{align}\label{eq4.39}
&\frac{d}{dt} \|\varrho^{\frac{1}{2}} u_{t}\|_{L^{2}}^{2}+\|\nabla u_{t}\|_{L^{2}}^{2}
\leq C(\|\varrho^{\frac{1}{2}} u_{t}\|_{L^{2}}^{2} +\|\nabla d_{t}\|_{L^{2}}^{2})+ \frac{1}{4C_{3}+1}\|\nabla^{2} d_{t}\|_{L^{2}}^{2}+C (
\|\nabla^{2} u\|_{L^{2}}^{2}+1).
\end{align}

Differentiating $\eqref{eq1.1}_{3}$ with respect to time variable $t$, multiplying the resulting equality with $d_{t}$ and then integrating by parts over $\mathbb{R}^{2}$, we have
\begin{align*}
\frac{1}{2}\frac{d}{dt} \|d_{t}\|_{L^{2}}^{2}+\|\nabla d_{t}\|_{L^{2}}^{2}\leq& C\int |u_{t}||\nabla d||d_{t}|\text{d}x +C\int |\nabla d_{t}| |\nabla d||d_{t}|\text{d}x+C\int |\nabla d|^{2} |d_{t}|^{2}\text{d}x\nonumber\\
\triangleq & M_{5}+M_{6}+M_{7}.
\end{align*}
By using  H\"{o}lder's and Gagliardo-Nirenberg inequalities,  \eqref{eq5.2}, \eqref{eq5.4}, \eqref{eq3.14} and \eqref{eq4.31}, we have
\begin{align*}
M_{5}\leq &\|u_{t}\bar{x}^{-\frac{a}{2}}\|_{L^{4}} \|\nabla d \bar{x}^{\frac{a}{2}}\|_{L^{2}} \|d_{t}\|_{L^{4}}\leq (\|\varrho^{\frac{1}{2}} u_{t}\|_{L^{2}}+\|\nabla u_{t}\|_{L^{2}})\|\nabla d \bar{x}^{\frac{a}{2}}\|_{L^{2}} \|d_{t}\|_{L^{2}}^{\frac{1}{2}}\| \nabla d_{t}\|_{L^{2}}^{\frac{1}{2}}\nonumber\\
\leq& \frac{1}{4}\|\nabla u_{t}\|_{L^{2}}^{2}+C(\|\varrho^{\frac{1}{2}} u_{t}\|_{L^{2}}^{2}+ \|d_{t}\|_{L^{2}}^{2}+\|\nabla d_{t}\|_{L^{2}}^{2}),\nonumber\\
M_{6}+&M_{7}\leq \frac{1}{2}\|\nabla d_{t}\|_{L^{2}}+C\|\nabla d\|_{L^{4}}^{2}\|d_{t}\|_{L^{4}}^{2}
\leq \frac{1}{2}\|\nabla d_{t}\|_{L^{2}}+C\|\nabla d\|_{L^{2}}\|\nabla^{2} d\|_{L^{2}}\|d_{t}\|_{L^{2}}\| \nabla d_{t}\|_{L^{2}}\nonumber\\
\leq& \frac{1}{2}\|\nabla d_{t}\|_{L^{2}}+C\|d_{t}\|_{L^{2}}^{2}.
\end{align*}
Hence
\begin{align}\label{eq4.40}
\frac{d}{dt} \|d_{t}\|_{L^{2}}^{2}+\|\nabla d_{t}\|_{L^{2}}^{2}\leq& C_{3}\|\nabla u_{t}\|_{L^{2}}^{2}+C(\|\varrho^{\frac{1}{2}} u_{t}\|_{L^{2}}^{2}+ \|d_{t}\|_{L^{2}}^{2}+\|\nabla d_{t}\|_{L^{2}}^{2}).
\end{align}

Differentiating \eqref{eq3.8} with respect to time variable $t$ ensures
\begin{align}\label{eq4.41}
\nabla d_{tt}-\Delta\nabla d_{t} =-\nabla (u\cdot \nabla d)_{t}+\nabla (|\nabla d|^{2}d)_{t}.
\end{align}
Multiplying \eqref{eq4.41} by $\nabla d_{t}$, and integrating the resulting equality over $\mathbb{R}^{2}$, it follows that
\begin{align}\label{eq4.42}
\frac{1}{2}\frac{d}{dt}\|\nabla d_{t}\|_{L^{2}}^{2}&+\|\nabla^{2} d_{t}\|_{L^{2}}^{2}
\leq  C\int |\nabla u_{t}||\nabla d||\nabla d_{t}|\text{d}x+C\int |\nabla u||\nabla d_{t}|^{2}\text{d}x
\nonumber\\
&+
C\int |u_{t}||\nabla d| |\nabla^{2} d_{t}|\text{d}x
+C\int |\nabla d|^{2}|d_{t}||\nabla^{2} d_{t}|\text{d}x+C\int |\nabla d||\nabla d_{t}| |\nabla^{2} d_{t}|\text{d}x\nonumber\\
&\triangleq M_{8}+M_{9}+M_{10}+M_{11}+M_{12}.
\end{align}
By using  H\"{o}lder's and Gagliardo-Nirenberg inequalities,  \eqref{eq5.2}, \eqref{eq5.4}, \eqref{eq3.14} and \eqref{eq4.31} we have
\begin{align*}
M_{8}\leq& \|\nabla u_{t}\|_{L^{2}}\|\nabla d_{t}\|_{L^{4}}\|\nabla d\|_{L^{4}}\leq \frac{1}{2} \|\nabla u_{t}\|_{L^{2}}^{2}+C
\|\nabla d\|_{L^{2}}\|\nabla^{2} d\|_{L^{2}} \|\nabla d_{t}\|_{L^{2}}\|\nabla^{2} d_{t}\|_{L^{2}}\nonumber\\
\leq& \frac{1}{2} \|\nabla u_{t}\|_{L^{2}}^{2}+ \frac{1}{4} \|\nabla^{2} d_{t} \|_{L^{2}}^{2}
+C\|\nabla d_{t}\|_{L^{2}}^{2},\nonumber\\
M_{9}\leq &C\|\nabla u\|_{L^{2}}\|\nabla d_{t}\|_{L^{4}}^{2}
\leq C\|\nabla u\|_{L^{2}}\|\nabla d_{t}\|_{L^{2}} \|\nabla^{2} d_{t}\|_{L^{2}}
\leq \frac{1}{8} \|\nabla^{2} d_{t}\|_{L^{2}} +C\|\nabla d_{t}\|_{L^{2}},\nonumber\\
 M_{10}\leq \!C\!\!&\int\!\! |u_{t}\bar{x}^{-\frac{2a-1}{4}}| |\nabla d \bar{x}^{\frac{a}{2}}|^{\frac{2a-1}{2a}}
 |\nabla d|^{\frac{1}{2a}}|\nabla^{2} \!d_{t}|\text{d}x
\!\leq\! C\|u_{t}\bar{x}^{-\frac{2a-1}{4}}\|_{L^{8a}}\|\nabla d\bar{x}^{\frac{a}{2}}\|_{L^{2}}^{\frac{2a-1}{2a}} \|\nabla d\|_{L^{2}}^{\frac{1}{4a}} \|\nabla^{2} d\|_{L^{2}}^{\frac{1}{4a}}
\|\nabla^{2} d_{t}\|_{L^{2}}\nonumber\\
\leq& \frac{1}{8} \|\nabla^{2}d_{t}\|_{L^{2}}^{2}+C\|u_{t}\bar{x}^{-\frac{2a-1}{4}}\|_{L^{8a}}^{2}
\leq \frac{1}{8} \|\nabla^{2}d_{t}\|_{L^{2}}^{2}
+C(\|\varrho^{\frac{1}{2}} u_{t}\|_{L^{2}}+\|\nabla u_{t}\|_{L^{2}}^{2}),\nonumber\\
M_{11}\leq & \frac{1}{8}\|\nabla^{2} d_{t}\|_{L^{2}}^{2}+C\|\nabla d\|_{L^{8}}^{2}\| d_{t}\|_{L^{4}}^{2}\leq  \frac{1}{8}\|\nabla^{2} d_{t}\|_{L^{2}}^{2}+C\|\nabla d\|_{L^{2}}^{\frac{1}{2}}\|\nabla^{2} d\|_{L^{2}}^{\frac{3}{2}}\| d_{t}\|_{L^{2}}\|\nabla d_{t}\|_{L^{2}}\nonumber\\
\leq&  \frac{1}{8}\|\nabla^{2} d_{t}\|_{L^{2}}^{2}+C(\| d_{t}\|_{L^{2}}^{2}+\|\nabla d_{t}\|_{L^{2}}^{2}),\nonumber\\
M_{12}\leq & \frac{1}{16}\|\nabla^{2} d_{t}\|_{L^{2}}^{2}+C\|\nabla d\|_{L^{4}}^{2}\| \nabla d_{t}\|_{L^{4}}^{2}\leq \frac{1}{8}\|\nabla^{2} d_{t}\|_{L^{2}}^{2}+C\|\nabla d_{t}\|_{L^{2}}^{2}.
\end{align*}
Inserting the estimates of $M_{i} (i=8,9,\cdots, 12)$ into \eqref{eq4.42}, it follows that
\begin{align}\label{eq4.43}
&\frac{d}{dt}\|\nabla d_{t}\|_{L^{2}}^{2}+\|\nabla^{2} d_{t}\|_{L^{2}}^{2}
\leq C_{3} (\|\varrho^{\frac{1}{2}} u_{t}\|_{L^{2}}+\|\nabla u_{t}\|_{L^{2}}^{2})+C
 (\| d_{t}\|_{L^{2}}^{2}+\|\nabla d_{t}\|_{L^{2}}^{2}).
\end{align}

Next, multiplying \eqref{eq4.39} by  ${2C_{3}+1}$ and adding the resulting inequality with \eqref{eq4.40} and  \eqref{eq4.43}, we have
\begin{align*}
&\frac{d}{dt}({(2C_{2}+1)}\|\varrho^{\frac{1}{2}} u_{t}\|_{L^{2}}^{2}+\|d_{t}\|_{H^{1}}^{2})+\|\nabla u_{t}\|_{L^{2}}
+\frac{1}{2} \|\nabla d_{t}\|_{H^{1}}^{2} \nonumber\\
\leq& C(1+\|\varrho^{\frac{1}{2}} u_{t}\|_{L^{2}}^{2}+\|\nabla d_{t}\|_{H^{1}}^{2})
+C(1+\|\nabla^{2} u\|_{L^{2}}^{2}).
\end{align*}
Multiplying the above inequality by $t$, we obtain
\begin{align}\label{eq4.44}
\sup_{0\leq t\leq T} t\left(\|\varrho^{\frac{1}{2}} u_{t}\|_{L^{2}}^{2}+\|\nabla d_{t}\|_{H^{1}}^{2}\right)
+\int_{0}^{T} t\left(\|\nabla u_{t}\|_{L^{2}}^{2}+\|\nabla d_{t}\|_{H^{1}}^{2}\right)\text{d}t\leq C(T),
\end{align}
after by using Gronwall's inequality and \eqref{eq4.08}.

Finally, it follows from H\"{o}lder's and Gagliardo-Nirenberg inequalities,  \eqref{eq5.2}, \eqref{eq5.4}  and \eqref{eq3.14}  that
\begin{align*}
\|\nabla^{3}d\|_{L^{2}}^{2}\leq& C(\|\nabla d_{t}\|_{L^{2}}^{2}+\||\nabla u||\nabla d|\|_{L^{2}}^{2}
+\||u||\nabla^{2}d|\|_{L^{2}}^{2}+\||\nabla d|^{3}\|_{L^{2}}^{2}+\||\nabla^{2}d||\nabla d|\|_{L^{2}}^{2})\nonumber\\
\leq& C(\|\nabla d_{t}\|_{L^{2}}^{2}+\|\nabla u\|_{L^{4}}^{2} \|\nabla d\|_{L^{4}}^{2}+\|u\bar{x}^{-\frac{a}{4}}\|_{L^{8}}^{2}
\|\nabla^{2}d\bar{x}^{\frac{a}{2}}\|_{L^{2}}\|\nabla^{2}d\|_{L^{4}}
+\|\nabla d\|_{L^{6}}^{6}+\||\nabla d||\nabla^{2}d|\|_{L^{2}}^{2})\nonumber\\
\leq& C(\|\nabla d_{t}\|_{L^{2}}^{2}+ \|\nabla^{2} u\|_{L^{2}}^{2}+\|\nabla^{2}d\bar{x}^{\frac{a}{2}}\|_{L^{2}}^{2}+\|u\bar{x}^{-\frac{a}{4}}\|_{L^{8}}^{4}
\|\nabla^{2}d\|_{L^{4}}^{2}+\|\nabla d\|_{L^{2}}^{2}\|\nabla^{2}d\|_{L^{2}}^{4}+\|\nabla^{2} d\|_{L^{3}}^{2}\|\nabla d\|_{L^{6}}^{2})\nonumber\\
\leq& C(\|\nabla d_{t}\|_{L^{2}}^{2}\!+ \!\|\nabla^{2} u\|_{L^{2}}^{2}\!+\!\|\nabla^{2}d\bar{x}^{\frac{a}{2}}\|_{L^{2}}^{2}\!+\!(\|\varrho^{\frac{1}{2}} u\|_{L^{2}}^{2}\!+\!\|\nabla u\|_{L^{2}}^{2})^{2}
\|\nabla^{2}d\|_{L^{2}}\|\nabla^{3}d\|_{L^{2}}\nonumber\\
&+\|\nabla^{2}d\|_{L^{2}}^{4}+\|\nabla^{2}d\|_{L^{2}}\|\nabla^{3}d\|_{L^{2}})\nonumber\\
\leq&\frac{1}{2}\|\nabla^{3}d\|_{L^{2}}^{2} +C(\|\nabla d_{t}\|_{L^{2}}^{2}+ \|\nabla^{2} u\|_{L^{2}}^{2}+\|\nabla^{2}d\|_{L^{2}}^{4}+\|\nabla^{2}d\bar{x}^{\frac{a}{2}}\|_{L^{2}}^{2}
+\|\nabla^{2}d\|_{L^{2}}^{2})\nonumber\\
\leq &\frac{1}{2}\|\nabla^{3}d\|_{L^{2}}^{2} +C(\|\nabla d_{t}\|_{L^{2}}^{2}+ \|\nabla^{2} u\|_{L^{2}}^{2}+\|\nabla^{2}d\bar{x}^{\frac{a}{2}}\|_{L^{2}}^{2}
+\|\nabla^{2}d\|_{L^{2}}^{2}),
\end{align*}
which combined with \eqref{eq5.3}, \eqref{eq4.12}, \eqref{eq4.32} and \eqref{eq4.44} indicates \eqref{eq4.34}.
This completes the proof of Lemma \ref{lem4.7}.
\end{proof}

\subsection{The proof of Theorem \ref{thm1.2}}

In this subsection, with Theorem \ref{thm3.1} and the a priori estimates obtained in Subsections 4.1 and 4.2 at hand, we shall give the proof of Theorem \ref{thm1.2}.

By Theorem  \ref{thm3.1}, we know there exists a $T_{*}>0$ such that the Cauchy problem of system \eqref{eq1.1}--\eqref{eq1.2} admits a unique strong solution $(\varrho, u,P, d)$ on $\mathbb{R}^{2}\times (0,T_{*}]$. In what follows, we shall extend the local solution to all the time.

Set
\begin{align}\label{eq4.45}
T^{*}\!=\sup \left\{ T | (\varrho,u, P, d) \text{ is a strong solution to \eqref{eq1.1}--\eqref{eq1.2} on } \mathbb{R}^{2}\times (0,T]\right\}.
\end{align}
First, for any $0<\tau<T_{*}<T\leq T^{*}$ with $T$ finite, one deduces from \eqref{eq5.2}, \eqref{eq5.4}, \eqref{eq4.12} and \eqref{eq4.34} that for all $q\geq 2$,
\begin{align}\label{eq4.46}
\nabla u, \nabla d,\nabla^{2}d\in C([\tau,T]; L^{2}(\mathbb{R}^{2})\cap L^{q}(\mathbb{R}^{2})),
\end{align}
where one has used the standard embedding
\begin{align*}
L^{\infty}(\tau,T; H^{1}(\mathbb{R}^{2}))\cap H^{1}(\tau,T; H^{-1}(\mathbb{R}^{2})) \hookrightarrow C(\tau,T;L^{q}(\mathbb{R}^{2}))\quad \text{ for all }q\in [2,\infty).
\end{align*}
Moreover, it follows from \eqref{eq4.23}, \eqref{eq4.30} and \cite[Lemma 2.3]{Lions1} that
\begin{align}\label{eq4.47}
\varrho\in C([0,T];L^{1}(\mathbb{R}^{2})\cap H^{1}(\mathbb{R}^{2})\cap W^{1,q}(\mathbb{R}^{2})).
\end{align}

Now, we claim that
\begin{align}\label{eq4.48}
T^{*}=\infty.
\end{align}
Otherwise, if $T^{*}<\infty$, it follows from \eqref{eq4.46}, \eqref{eq4.47}, \eqref{eq5.2}, \eqref{eq5.4}, \eqref{eq4.30} and \eqref{eq4.31} that
\begin{align*}
(\varrho,u,P,d)(x,T^{*})= \lim_{t\rightarrow T^{*}}(\varrho, u,P,d)(x,t)
\end{align*}
satisfies  \eqref{eq1.7} at $t=T^{*}$. Moreover, using \eqref{eq1.5} and \eqref{eq3.6} with $p=1$, it follows that
\begin{align*}
\int_{\mathbb{R}^{2}}\varrho(x,T^{*})\text{d}x=\int_{\mathbb{R}^{2}} \varrho_{0}(x) \text{d}x=1.
\end{align*}
Notice that there exists  $N_{0}>0$, 
it is easy to see that
\begin{align*}
\int_{\mathbb{R}^{2}} \varrho(x,T^{*})\text{d}x\geq \frac{1}{2} \int_{B_{N_{0}}}\varrho(x,T^{*})\text{d}x\geq \frac{1}{2}.
\end{align*}
Thus, we can take $(\varrho,u,P,d)(x,T^{*})$ as the initial data, and Theorem \ref{thm3.1} implies that one could extend the local solutions beyond $T^{*}$. This contradicts the assumption of $T^{*}$ in \eqref{eq4.45}. Hence, we prove \eqref{eq4.48}. Furthermore, from \eqref{eq5.3}, \eqref{eq4.11}, \eqref{eq4.12}  and \eqref{eq4.34}, one obtains that \eqref{eq1.10} holds.
This completes the proof of Theorem \ref{thm1.2}.
\hfill$\Box$
\\
\\
\textbf{Acknowledgments}

The authors would like to thank Professor  Song Jiang for his helpful suggestions.

\end{document}